\documentclass[a4paper,12pt,reqno]{article}
 \usepackage[english]{babel}
  \usepackage{amsmath,amsfonts,amssymb,amsthm,bbm}
   \usepackage{graphics,epsfig,psfrag} 
   \usepackage{paralist,subfigure}
   \usepackage[dvipsnames]{xcolor}
   \usepackage{hyperref} 

    \usepackage[active]{srcltx} 
    \usepackage{color,soul} 
    \usepackage[utf8]{inputenc}
\usepackage[T1]{fontenc}
    \usepackage{authblk}
    


\tolerance = 1500 
\hoffset = .3cm
\voffset = -.6cm
 
\textwidth = 15.2cm
\textheight = 23.4cm
\topmargin = 0pt
\headheight = 20pt
\oddsidemargin = 0pt
\evensidemargin = 0pt
\marginparwidth = 10pt
\marginparsep = 10pt
\pagenumbering{arabic}

\newtheorem{theorem}{Theorem}[section]

\newtheorem{lemma}[theorem]{Lemma}
\newtheorem{prop}[theorem]{Proposition}

\theoremstyle{definition}
\newtheorem{definition}[theorem]{Definition}
\newtheorem{req}{Remark}

\newtheorem{question}{Question}



\DeclareMathOperator\inter{int}

\def\N{\mathbb{N}}
\def\Z{\mathbb{Z}}

\def\R{\mathbb{R}}
\def\C{\mathbb{C}}

\let\e=\varepsilon

\let\t=\tilde
\let\ol=\overline
\let\ul=\underline
\let\.=\cdot
\let\0=\emptyset

\let\mc=\mathcal

\def\1{\mathbbm{1}}

\def\l{\lambda_1}

\newenvironment{formula}[1]{\begin{equation}\label{#1}}
                       {\end{equation}\noindent}

\def\Fi#1{\begin{formula}{#1}}
\def\Ff{\end{formula}\noindent}

\setlength{\marginparwidth}{1in}




\title{\bf  Threshold phenomenon and traveling waves for heterogeneous integral equations and epidemic models.}

\author[]{Romain {\sc Ducasse}}
\affil[]{Universit\'e de Paris and Sorbonne Universit\'e, CNRS, Laboratoire Jacques-Louis Lions (LJLL), F-75006 Paris, France}	


\begin{document}

\date{}
\maketitle


\noindent {\textbf{Keywords:} Nonlinear integral equations, integro-differential systems, epidemiology, SIR models, threshold phenomenon, traveling waves, anisotropic equations, heterogeneous models.} \\

\noindent {\textbf{MSC:} 45M05, 45M15, 35R09, 35B40, 92D30.}

\begin{abstract}
We study some anisotropic heterogeneous nonlinear integral equations arising in epidemiology. We focus on the case where the heterogeneities are spatially periodic. In the first part of the paper, we show that the equations we consider exhibit a \emph{threshold phenomenon}. In the second part, we study the existence and non-existence of \emph{traveling waves}, and we provide a formula for the admissible speeds. In a third part, we apply our results to a spatial heterogeneous SIR model.
\end{abstract}

\section{Introduction}

\subsection{Motivations: spatial models for the spread of epidemics}\label{sec intro}


In $1927$, Kermack and McKendrick introduced in \cite{KmcK1, KmcK2, KmcK3} several deterministic models describing the evolution of a disease in a closed population. Their most general model consists in a \emph{renewal equation for the infection}, and takes the form of the following nonlinear Volterra integral equation:
\begin{equation}\label{ren ode}
u(t) = \int_0^{t} \Gamma(\tau)g(u(t-\tau))d\tau + f(t), \quad t>0.
\end{equation}
The unknown function $u(t)\geq 0$ (the \emph{cumulative force of infection}) represents ``how much'' the population is contaminated by time $t>0$; $u=0$ means no contamination, while $u>0$ means that a proportion of the population is infected. The kernel $\Gamma(\tau)$ encodes the characteristics of the epidemic (mean duration of the contamination, incubation period...). The function $g$ reflects the nonlinear growth of the epidemic. Finally, the function $f$ accounts for the initial infectivity. We refer to \cite{formulation} for more details concerning the modelling aspects.\\

Kermack and McKendrick also introduced  in the mentioned papers the {\em SIR model}. It consists in a set of coupled ODEs, and became of great importance in mathematical epidemiology, to such an extend that it sometimes overshadows the more general model \eqref{ren ode}, which encompasses not only the SIR model, but also many more. \\

One important limitation of these models is that they do not take into account \emph{spatial effects}, such as diffusion and migration of individuals. Such effects are now recognised as being of first importance in the understanding of propagation of epidemics. To bridge this gap, Diekmann \cite{Di1} and Thieme~\cite{T1} introduced independently in $1977$ the following spatial generalization of~\eqref{ren ode}:
\begin{equation}\label{eq}
u(t,x) = \int_0^{t} \int_{y\in \R^N} \Gamma(\tau,x,y)g(u(t-\tau,y))dyd\tau + f(t,x), \quad t>0,\ x\in \R^N.
\end{equation}
As in \eqref{ren ode}, the unknown is the function $u$, which still represents the strength of infection, now depending not only on the time variable $t$, but also on a space variable~$x$. We refer to the original papers  \cite{Di1, T1} for the details concerning the modelling.\\

While equation \eqref{eq} describes the evolution of an epidemic in a population where some infected individuals are introduced at a given initial time,
%
it is also interesting to study the propagation of an epidemic \emph{without assuming any specific initial condition}, to see the ``generic'' way the epidemic spreads through space. In this case, it is natural to consider the same problem but with solutions defined for all time $t\in \R$; this allows to find \emph{traveling waves} solutions. This is possible by considering the second form of the model by Diekmann and Thieme:
\begin{equation}\label{eq waves}
u(t,x) = \int_0^{+\infty} \int_{y\in\R^N} \Gamma(\tau,x,y)g(u(t-\tau,y))dyd\tau, \quad t\in \R,\ x\in \R^N.
\end{equation}
Diekmann and Thieme \cite{Di1, Di2, T1, T2, T3, TZ} studied equations \eqref{eq}, \eqref{eq waves} in the homogeneous, isotropic case, that is, under the assumption that
\begin{equation}\label{hyp iso}
\Gamma(t,x,y) = \Lambda(t,\vert x-y \vert),
\end{equation}
for some $\Lambda : \R^2 \to \R$, i.e, $\Gamma$ only depends on the distance between the points (and on $t$). See also \cite{Ar} for related results.\\
%

However, hypothesis \eqref{hyp iso} is very retrictive from the point of view of modelling: it means that that everything in the model - the medium, the initial population, the recovery and contamination rates - is spatially homogeneous (this appears clearly when considering SIR models, see the discussion in Section \ref{sec SIR} below - we refer to the original papers \cite{Di1, T1} for more general considerations).\\

This paper is dedicated to the study of equations \eqref{eq}, \eqref{eq waves} without this isotropy condition - we focus on the more general case of periodic heterogeneous media, that is, situations where $\Gamma$ satisfies (without loss of generality, we consider only the $1$-periodic case throughout the whole paper)
\begin{equation}\label{hyp per}
\Gamma(t, x +k, y+ k) = \Gamma(t, x, y), \quad \forall t \in \R,\ \forall x,y \in \R^N,\ \forall k \in \Z^N.
\end{equation}
We consider here two aspects of equations \eqref{eq}, \eqref{eq waves}: first, we study under which conditions initial disturbances will propagate (the {\em threshold effect}), second, we study the existence of {\em traveling waves}. Therefore, this paper can be seen as a generalization of the papers of Diekmann and Thieme mentionned above to an heterogeneous setting.\\

The techniques we will employ come from the study of heterogeneous KPP (Kolmogorov-Petrovski-Piskunov \cite{KPP}) reaction-diffusion equations. Those are PDEs of the form $\partial_t u = \nabla (A(x)\nabla u) + f(u)$, with $f$ satisfying some concavity assumption. Indeed, if $u(t,x)$ is solution to such a reaction-diffusion equation, then one can see that (under suitable regularity assumptions), $u(t,x)$ also solves an equation of the form \eqref{eq}, with $\Gamma(t,x,y)$ being the fundamental solution of a parabolic operator. Many methods were introduced in the last years to study heterogeneous reaction-diffusion equations (see \cite{BReigen} for instance), and as we will see, they can somewhat be adapted to the setting of nonlinear integral equations.\\


One motivation in studying \eqref{eq}, \eqref{eq waves} is to obtain results for SIR models. Indeed, just like the original model of Kermack and McKendrick \eqref{ren ode} encompasses the standard SIR model (recalled below in section \ref{sec SIR}), equations \eqref{eq}, \eqref{eq waves} encompass some spatial SIR models. As a by-product of our analysis of these integral equations, we will obtain new results for some heterogeneous SIR model. Before presenting our main results, we recall in the next section some facts about SIR models and we explain why they are special cases of the renewal equations presented here. This discussion is also enlightening to understand what $\Gamma,g,f$ represent in \eqref{eq},~\eqref{eq waves}.

\subsection{Connection with SIR models}\label{sec SIR}

SIR systems are compartmental models, that is, the population is divided into several classes (the compartments), that interact following some simple rules. The first SIR model was introduced by Kermack and McKendrick in their paper \cite{KmcK1}, as a special case of their general model \eqref{ren ode}. It takes the form of the following set of ODE:
\begin{equation}\label{ode sir}
\left\{
\begin{array}{rll}
\dot S(t) &= - \alpha S(t) I(t),& t \in \R,\\
\dot I(t) &=  \alpha S(t) I(t) - \mu I(t),& t \in \R,\\
\dot R(t) &=  \mu I(t),& t \in \R,\\
\end{array}
\right.
\end{equation}
where $\alpha,\mu>0$. The functions $S,I,R$ are the unknowns, and represent respectively the number of {\em Susceptible, Infectious} and {\em Recovered} individuals in the population. The infectious individuals contaminate the susceptible ones, following a law of mass-action, that is, the rate of contamination is $\alpha I$. The infectious individuals recover with rate $\mu$. Observe that the function $R$, the recovered, does not play any role in the dynamics of the system. For this reason, we will not mention it in the sequel.\\


 Many spatial generalizations of the SIR model \eqref{ode sir} were introduced. We pay in this paper a particular attention to the following one
 \begin{small}
   \begin{equation}\label{SIR 2}
\left\{
\begin{array}{rll}
\partial_t S(t,x) &= - \alpha(x)S(t,x) \int_{y\in \R^N}K(x,y)I(t,y)dy,& t >0 ,\ x\in \R^N,\\
\partial_t I(t,x) &=  \alpha(x)S(t,x) \int_{y\in \R^N}K(x,y)I(t,y)dy - \mu(x) I(t,x),& t >0,\ x\in \R^N.
\end{array}
\right.
\end{equation}
 \end{small}
The functions $S(t,x),I(t,x)$ represent the densities of susceptible and infected indivuals respectively. The contamination is non-local, that is, the susceptible individuals located at point $x$ can get contaminated by infectious located at an other point $y$, with some probability $K(x,y)$. The rate of infection at point $x$ at time $t$ is $\alpha(x)\int_{y\in \R^N}K(x,y)I(t,y)dy$. The recovery rate is a function $\mu(x)$. The fact that it can vary from places to places may account for the effects of localized quarantine zones or different vaccination policies, for instance.\\

The connection between the SIR models and the renewal equations is known since the pionering works of Kermack and McKendrick: up to doing some change of functions - sometimes called the \emph{linear chain trick}, see \cite{formulation} - one can turn SIR models into renewal equations.

Indeed, if $(S(t,x),I(t,x))$ solves \eqref{SIR 2} with initial datum $(S_0(x),I_0(x))$\footnote{We say that the couple $(S,I)$ solves \eqref{SIR 2} if it is $C^1$ in $t>0$, $C^0$ on $[0,+\infty)\times \R^N$, if \eqref{SIR 2} is satisfied pointwise for $(t,x) \in (0,+\infty)\times \R^N$, and if $(S(t,\cdot),I(t,\cdot))\to (S_0,I_0)$ as $t$ goes to $0$ pointwise in $x$. }, where $S_0 >0$, then $u(t,x) := -\ln \left(\frac{S(t,x)}{S_0(x)}\right)$ solves \eqref{eq} with $g(z) = 1-e^{-z}$ and
\begin{multline}\label{gamma 2}
\Gamma(t,x,y) = \alpha(x)S_0(y)e^{-\mu(y) t} K(x,y) ,\\  \quad f(t,x) = \alpha(x)\int_0^t\int_{y\in \R^N}K(x,y)I_0(y) e^{-\mu(y)\tau}d\tau dy.
\end{multline}
The details are given in Section \ref{sec comp}. As annouced before, we see that $\Gamma$ indeed encodes the caracteristics of the epidemic (contamination rate, recovery rate) and of the initial susceptible population, while $f$ accounts for the initial infection.

If we wanted to study system \eqref{SIR 2} for $t\in \R$ (rather than for $t>0$), without specifying any initial datum (which will be the case in order to find traveling waves), then, we can turn \eqref{SIR 2} into \eqref{eq waves} with similar $\Gamma,g$. Again, see Section \ref{sec comp} for details.\\

 The system \eqref{SIR 2} was originally considered by Kendall in \cite{K1, K2}, under the assumption that everything is homogeneous, that is, $\alpha, \mu$ are positive constants, the contamination kernel $K(x,y)$ is a decreasing function of the distance only (i.e., $K(x,y) = K(\vert x - y\vert)$), and the initial datum for $S$ is constant.

 In this case, the function $\Gamma(t,x,y)$ given by \eqref{gamma 2} satisfies the isotropy hypothesis \eqref{hyp iso}, and the model of Kendall can be studied using the results on the homogeneous renewal equation mentioned above (see for instance \cite{M} and references therein).
 
 As soon as something in the model \eqref{SIR 2} is not homogeneous, this is not possible. By studying \eqref{eq}, \eqref{eq waves} under the hypothesis \eqref{hyp iso}, we will be able to consider some spatial heterogeneous SIR models.



 \begin{req}
 Many other spatial SIR models were introduced. An interesting setting is to consider situations where the individuals can ``move''. This can be done by adding Laplace (or more general diffusion) operators in the equations for $S$ and $I$.

 Hosono and Ilyas \cite{HI} proved the existence of traveling waves for such SIR models in the homogeneous case. Ducrot and Giletti \cite{DG} proved the existence of waves, their stability and the existence of a threshold phenomenon in the heterogeneous periodic framework when only the infected diffuse (not the susceptibles) and with local contamination.
When both the susceptible and infectious individuals diffuse, much less is known (even in the homogeneous framework), and the renewal equation approach does not seem to apply anymore. The author considered the situation of a bounded domain in \cite{Duc}.

We also refer to \cite{BRRSIR} where diffusive SIR models with networks are considered with similar methods.
 \end{req}

\subsection{Propagation and generalized traveling waves}


When studying models from epidemiology, the main questions that one may want to answer are the two following:
\begin{question}\label{q1}
Under what conditions does the epidemic propagate? Moreover, when the epidemic propagates, what is the final state of the population?
\end{question}

\begin{question}\label{q2}
How does the epidemic spreads through space? What is the ``speed'' of the epidemic?
\end{question}
Of course, these notions of propagation, of final state, of speed, must be adapted to the model under consideration. The next definition introduces the notions of \emph{propagation} and of \emph{fading out} for an epidemic described by the general model \eqref{eq}.

\begin{definition}[Propagation for \eqref{eq}]\label{def prop}
We say that the epidemic \emph{propagates} if the solution $u(t,x)$ of \eqref{eq} converges to some $u_{\infty}(x) \in L^{\infty}(\R^N)$ as $t$ goes to $+\infty$, locally uniformly in $x\in \R^N$, and if
\begin{equation*}\label{propagation}
\liminf_{\vert x \vert \to +\infty} u_{\infty}(x) >0.
\end{equation*}
On the other hand, we say that the epidemic \emph{fades out} if the solution $u$ of \eqref{eq} converges similarly to some $u_{\infty} \in L^{\infty}(\R^N)$ such that
\begin{equation*}\label{no propagation}
\limsup _{\vert x \vert \to +\infty} u_{\infty}(x) = 0.
\end{equation*}
\end{definition}
In other words, the epidemic propagates if the infection eventually spreads everywhere.\\

To answer Question \ref{q1}, we will prove that \eqref{eq} exhibits a \emph{threshold phenomenon}, that is, we will identify a quantity $\lambda_1 \in \R$, depending on the characteristics of the epidemic and on the initial population, such that, if $\l$ is greater than some threshold, the epidemic propagates, no matter how ``small'' the initial infectivity. On the other hand, if $\l$ is below the threshold, then the epidemic fades out, no matter how ``large'' the initial infectivity.\\

The first proof that an epidemic model can exhibit a threshold phenomenon dates back to the paper of Kermack and McKendrick \cite{KmcK1} (we review some results in Section~\ref{sec results}).\\

To answer Question \ref{q2}, we will study the existence and non-existence of \emph{generalized traveling waves} for \eqref{eq waves}.
\begin{definition}[Traveling waves for \eqref{eq waves}]\label{def tw}
We say that a solution $u(t,x)$ of \eqref{eq waves} is a (generalized) traveling wave connecting $0$ to $U(x)\in L^{\infty}(\R^N)$ in the direction $e\in \mathbb{S}^{N-1}$ with speed $c > 0$ if
$$
 \sup_{x\cdot e - ct \leq \delta}\vert  u(t,x) - U(x)\vert\underset{\delta\to - \infty}{\longrightarrow} 0 \quad \text{ and } \quad  \sup_{x\cdot e - ct \geq \delta}\vert  u(t,x) \vert\underset{\delta\to + \infty}{\longrightarrow} 0.
$$
\end{definition}

The notion of generalized traveling waves was introduced, under a more general form, by Berestycki and Hamel in \cite{BHgtw} in the context of heterogeneous reaction-diffusion equations, in order to generalize the notion of traveling waves introduced by Kolmogorov, Petrovski and Piskunov \cite{KPP} for homogeneous reaction-diffusion equations. Let us mention that generalized traveling waves also generalize the concept of \emph{pulsating traveling waves}, see \cite{BHper, W} for more details.

The waves satisfying definition \ref{def tw} are sometimes called \emph{almost planar wave with linear speed}, see Definition 2.8 in \cite{BHgtw}.

%
%
%
%

%
\begin{req}\label{req tw}
The waves we consider here have a linear speed. Observe that there are no \emph{a priori} reason for this to hold true. Actually, there are examples of reaction-diffusion equations where the propagation happens with a super-linear speed. For instance, Cabr\'e and Roquejoffre~\cite{CR} prove that this is the case for reaction-diffusion equations with diffusion given by a fractional Laplace operator $(-\Delta)^s$, $s\in(0,1)$. This comes from the fact that the transition function of the underlying process decays ``too slowly'' (algebraically) at infinity. Similar observation was made in the context of neural field equations, see for instance \cite{FF}. To prevent such super-linear propagation here, we will restrict our attention to kernels $\Gamma$ that decay exponentially fast. 
\end{req}



\subsection{Results of the paper}\label{sec results}

We gather in this section the main results of the paper. After stating some hypotheses, we present in Section \ref{sec res 1} our results concerning the general model \eqref{eq} and \eqref{eq waves}. Section \ref{sec res 2} contains an application of our results to the SIR model \eqref{SIR 2}.

\subsubsection{General hypotheses}\label{sec hyp}

 The hypotheses presented here are classical (see \cite{Di1, TZ}) and will be assumed throughout the whole paper, without further notice.\\

We assume that $g$ is a strictly increasing, bounded, Lipschitz continuous function on $[0,+\infty)$, such that $g(0)=0$, $g(z)> 0$ for all $z>0$. Moreover, we assume that
\begin{equation}\label{hyp g concave}
z \mapsto \frac{g(z)}{z} \ \text{ is strictly decreasing on}\ \R^+.
\end{equation}
In addition, $g(z)$ is differentiable at $z=0$, and there is $C>0$ such that,
\begin{equation}\label{hyp g}
g^{\prime}(0)z - C z^2 \leq g(z) \leq g^{\prime}(0)z, \quad  \text{for every}\ z\geq0.
\end{equation}
The right-hand side inequality is actually a consequence of \eqref{hyp g concave}.

We assume that $\Gamma \geq0$ is periodic in the sense \eqref{hyp per}, and that it is non-degenerate in the sense that there are $\e,  r >0$ such that $\Gamma(t,x,y) > \e$ if $  t   + \vert x - y\vert \leq r$. Moreover,
$$
\forall x\in \R^N, \quad \Gamma(\cdot, x, \cdot) \in L^1((0,+\infty)\times\R^N).
$$
We also assume the following regularity hypothesis: for every $\e>0$, there is $\delta>0$ such that, for every $x_1, x_2 \in \R^N$ such that $\vert x_1 - x_2 \vert \leq  \delta$, we have
\begin{equation}\label{hyp gamma 1}
\int_0^{+\infty}\int_{\R^N} \vert \Gamma(\tau, x_1, y) - \Gamma(\tau, x_2, y)\vert d\tau dy < \e.
\end{equation}
The function $f(t,x)$, that appears only in \eqref{eq}, not in \eqref{eq waves}, is supposed to be continuous on $[0,+\infty)\times \R^N$ and non-negative. Moreover, we assume that $f$ is non-decreasing with respect to the variable $t$, and that
\begin{equation}\label{hyp f}
f(t,x) \underset{t \to +\infty}{\nearrow} f_{\infty}(x), \ \text{ locally uniformly in } \ x\in \R^N,
\end{equation}
where $f_{\infty}$ is bounded, uniformly continuous on $\R^N$ and satisfies
\begin{equation}\label{hyp f 2}
f_{\infty}(x) \underset{\vert x \vert \to +\infty}{\longrightarrow} 0.
\end{equation}
Under these general hypotheses, Diekmann \cite{Di1} proved the existence, uniqueness and convergence of solutions to \eqref{eq}.
\begin{prop}[\cite{Di1}, Theorems 3.3 and 3.4]\label{prop cv}
There is a unique continuous bounded solution $u(t,x)$ to \eqref{eq}. Moreover, $u$ is time-nondecreasing and
$$
u(t,x) \underset{t\to +\infty}{\nearrow} u_\infty(x) \quad \text{locally uniformly in }\ x\in \R^N,
$$
where $u_\infty \in C^{0}(\R^N)$ is a solution of the \emph{limiting equation}:
\begin{equation}\label{eq limit}
u_{\infty}(x) = \int_{\R^N} V(x,y)g(u_{\infty}(y))dy + f_{\infty}(x), \quad x\in \R^N,
\end{equation}
where, for every $x$ and a.e. $y$,
\begin{equation}\label{def V}
V(x,y) := \int_{0}^{+\infty}\Gamma(\tau,x,y)d\tau.
\end{equation}
\end{prop}
%

\subsubsection{Results on the integral equations \eqref{eq} and \eqref{eq waves}}\label{sec res 1}

In addition to the general hypotheses presented above - that are assumed throughout the whole paper - we will need others hypotheses specific to the heterogeneous case considered here.

First, we will assume that $V$, given by \eqref{def V}, satisfies, for every compact set $S\subset \R^N$,
\begin{equation}\label{hyp gamma vp}
\sup_{x\in S}\left(\int_{y \in S} V^2(x,y)dy\right) < +\infty.
\end{equation}
We will also add a ``symmetry hypothesis'' on $V$ (\eqref{sym V} below) and a decay hypothesis on $\Gamma$ (\eqref{non exp} below).\\

The first part of the paper is concerned with the threshold phenomenon, that is, with the long-time behavior of solutions to \eqref{eq}. If $u(t,x)$ is such a solution, Proposition \ref{prop cv} tells us that it converges as $t$ goes to $+\infty$ to a function $u_\infty$ solution of \eqref{eq limit}. To establish whether the epidemic propagates or fades out in the sense of Definition \ref{def prop}, we have to look for the values of $u_\infty(x)$ for $\vert x \vert$ large. Hypothesis \eqref{hyp f 2} states that $f_\infty$ vanishes for large $\vert x \vert$, therefore, it is reasonable to infer that $u_\infty$ should be similar, at least for large $\vert x \vert$, to a solution~$U$ of
\begin{equation}\label{eq stat}
U(x) = \int_{\R^N}V(x,y)g(U(y))dy, \quad x\in \R^N.
\end{equation}
Clearly, the function $U\equiv 0$ is solution of \eqref{eq stat}. We will prove that the epidemic propagates if, and only if, there is a strictly positive solution to \eqref{eq stat}.

The key-point in our analysis is that the long-time behavior of \eqref{eq} is completely determined by the principal periodic eigenvalue of the linearization of \eqref{eq stat} near $U=0$, that is, the operator $L$, acting on the set of continuous $1$-periodic functions on $\R^N$, $C^{0}_{per}(\R^N)$:
\begin{equation}\label{L}
L\phi(x) := \int_{\R^N} g^{\prime}(0)V(x,y)\phi(y)dy.
\end{equation}
In order to make to make this work, we require a \emph{symmetry hypothesis} on the operator $L$: we assume that there are $\gamma_1, \gamma_2 \in C^{0}_{per}(\R^N)$, $\gamma_1, \gamma_2 >0$  and that there is $\t V(x,y)$ such that $\t V(x,y) = \t V(y,x)$ for every $x,y\in \R^N$, and
\begin{equation}\label{sym V}
V(x,y) = \t V(x,y)\gamma_1(x)\gamma_2(y).
\end{equation}
These hypothesis may seem surprising at first glance. However, it turns out that what we said above (the fact that the principal periodic eigenvalue of the operator gives the stability of the null state) does not hold true without it. We present a counterexample below as Proposition \ref{prop ce}, and we give more details there about this fact. Fortunately, as we will see, this hypothesis is automatically satisfied when working with SIR models.\\

We let $\l \in \R$ denote the \emph{principal periodic eigenvalue} of $L$, that is, the unique real number such that there is $\phi_p \in C^{0}_{per}(\R^N)$, $\phi_p >0$, such that
$$
L\phi_p = \l \phi_p.
$$
The existence of $\l$ is a consequence of the Krein-Rutman theorem, see \cite{KR}, that we recall in Section \ref{sec th} as Theorem \ref{th KR}.
Our first result states that $\l$ characterizes the number of solutions to \eqref{eq stat} and the threshold phenomenon.
\begin{theorem}\label{th threshold}
Assume that $f_\infty \not\equiv 0$. Let $u(t,x)$ be the solution to \eqref{eq} and let $u_\infty(x) := \lim_{t\to +\infty}u(t,x)$.
\begin{itemize}

\item If $\l >1$, the epidemic propagates in the sense of Definition \ref{def prop}. Moreover, there is a unique bounded positive solution $U$ to \eqref{eq stat}. It is periodic and
$$
\sup_{\vert x \vert \geq R}\vert u_{\infty}(x) - U(x)\vert \underset{R \to +\infty}{\longrightarrow} 0.
$$
\item If $\l \leq 1$, the epidemic fades out, in the sense of Definition \ref{def prop}. Moreover, there are no positive bounded solutions to \eqref{eq stat}, and
$$
\sup_{\vert x \vert \geq R}\vert u_{\infty}(x) \vert \underset{R \to +\infty}{\longrightarrow} 0.
$$
\end{itemize}

\end{theorem}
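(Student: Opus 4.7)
The plan is to work with the fixed-point operator $T[U](x) := \int_{\R^N} V(x,y) g(U(y)) dy$, so that the limiting equation \eqref{eq limit} reads $u_\infty = T[u_\infty] + f_\infty$ and \eqref{eq stat} reads $U = T[U]$; the linearization at zero is $L\phi(x) = g'(0)\int V(x,y)\phi(y)dy$, with principal periodic eigenpair $(\l, \phi_p)$ from Krein--Rutman. I will exploit three algebraic features of $T$: monotonicity in $U$; the linear bound $T[U] \leq LU$ coming from the right inequality in \eqref{hyp g}; and strong sub-linearity $T[\sigma U] > \sigma T[U]$ for $\sigma \in (0,1)$ (reversed for $\sigma > 1$), which is the functional expression of \eqref{hyp g concave}. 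Combined with the $\Z^N$-invariance \eqref{hyp per}, these reduce every step to a sliding-plus-translation-compactness scheme.

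For the case $\l \leq 1$, I would first rule out bounded positive solutions of \eqref{eq stat}. Given such a $U$, set $\tau^* := \inf\{\tau > 0 : U \leq \tau \phi_p\}$, finite since $\phi_p$ is periodic and positive. Then $U = T[U] \leq L[\tau^* \phi_p] = \tau^* \l \phi_p$, which already contradicts the minimality of $\tau^*$ when $\l < 1$. For $\l = 1$, the strict inequality $g(z) < g'(0)z$ for $z>0$ forces $T[U] < L[\tau^* \phi_p]$ wherever $U > 0$; then either $U/\phi_p$ attains $\tau^*$ at a bounded point (and the strict inequality contradicts $U = T[U]$), or there is a sequence $\vert x_n \vert \to \infty$ realizing the infimum, in which case I write $x_n = k_n + \tilde x_n$ with $k_n \in \Z^N$ and $\tilde x_n \in [0,1)^N$, and use \eqref{hyp per} together with the equicontinuity provided by \eqref{hyp gamma 1} to extract a subsequential locally uniform limit $U_\infty$ of $U(\cdot + k_n)$ that again solves $U_\infty = T[U_\infty]$, satisfies $U_\infty \leq \tau^* \phi_p$, and attains $\tau^*$ on $[0,1]^N$, reducing to the previous situation. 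The fading-out statement for $u_\infty$ then follows by applying the very same translation-compactness argument to $v_n := u_\infty(\cdot + x_n)$ along any $\vert x_n \vert \to \infty$: by \eqref{hyp f 2} the shifted source $f_\infty(\cdot + x_n)$ tends to zero locally uniformly, so any subsequential limit is a bounded positive solution of \eqref{eq stat} (up to a shift by the limit of $\tilde x_n$), hence identically zero by the previous step.

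For the case $\l > 1$, existence of a periodic positive solution $U_*$ of \eqref{eq stat} proceeds by the sub-super-solution method: thanks to the quadratic correction in \eqref{hyp g}, one gets $T[\epsilon \phi_p] \geq \epsilon \phi_p(\l - O(\epsilon))$, so $\epsilon \phi_p$ is a subsolution for $\epsilon$ small, and since $g(z)/z \to 0$ (by \eqref{hyp g concave} and the boundedness of $g$), a large constant $M$ is a supersolution. Monotone iteration of $T$ starting from $\epsilon \phi_p$ produces a periodic $U_* \in [\epsilon \phi_p, M]$ solving $U_* = T[U_*]$. For uniqueness among all bounded positive solutions $U$, I run the sliding method on $\sigma^* := \inf\{\sigma > 0 : U \leq \sigma U_*\}$: strong sub-linearity gives $T[\sigma^* U_*] < \sigma^* T[U_*] = \sigma^* U_*$ wherever $\sigma^* > 1$, so the same bounded-contact vs.\ translation-at-infinity dichotomy (using \eqref{hyp per}) yields a contradiction unless $\sigma^* \leq 1$; hence $U \leq U_*$, and reversing the roles (now using that $U_*$ is periodic, hence bounded below away from zero) gives $U = U_*$.

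Finally, for the uniform convergence $\sup_{\vert x \vert \geq R}\vert u_\infty(x) - U_*(x) \vert \to 0$, I would argue by compactness: for any sequence $\vert x_n \vert \to \infty$ decomposed as $x_n = k_n + \tilde x_n$, extract a locally uniform subsequential limit $v_\infty$ of $v_n := u_\infty(\cdot + k_n)$. By \eqref{hyp per} and \eqref{hyp f 2}, $v_\infty$ satisfies $v_\infty = T[v_\infty]$ up to a shift by the limit of $\tilde x_n$, so by uniqueness $v_\infty$ is either $0$ or a shift of $U_*$. The main obstacle, in my view, is excluding the trivial limit. I would handle it by leveraging $f_\infty \not\equiv 0$ together with $\l > 1$: in the monotone construction $u^{(0)} \equiv 0$, $u^{(n+1)} := T[u^{(n)}] + f_\infty$, one has $u^{(n)} \nearrow u_\infty$, and the positivity of $V(x,y)$ on $\vert x - y \vert \leq r$ propagates the support of $f_\infty$ throughout $\R^N$ in finitely many iterations. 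Once $u^{(n_0)}$ exceeds $\epsilon \phi_p$ on a sufficiently large periodicity cell, the subsolution character of $\epsilon \phi_p$ (which used $\l > 1$) together with the monotonicity of $T$ stabilizes the tail of the iteration above $\epsilon \phi_p$ globally, yielding $u_\infty \geq \epsilon \phi_p > 0$ on $\R^N$. In particular $v_\infty \geq \epsilon \phi_p > 0$, so $v_\infty$ is a shift of $U_*$; since the sequence $\vert x_n \vert \to \infty$ was arbitrary, this gives the desired uniform convergence.
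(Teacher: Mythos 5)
Your overall architecture (monotone iteration from $\e\phi_p$, sliding comparisons, $\Z^N$-translation compactness) matches the paper's, and your treatment of the case $\l\leq 1$ and of the non-existence of positive solutions to \eqref{eq stat} is essentially the paper's argument. But there is a genuine gap in the case $\l>1$, and it occurs twice, at the same missing ingredient. First, in the uniqueness step: to get the reverse comparison $U_*\leq U$ you must start the sliding from below, i.e.\ you need $\inf\{\sigma>0: U_*\leq \sigma U\}$ to be finite, which requires $\inf_{\R^N}U>0$ for an \emph{arbitrary} bounded non-negative non-zero solution $U$ of \eqref{eq stat}. Your parenthetical invokes the positivity of the periodic solution $U_*$, which is the wrong function; nothing in your argument rules out a solution $U$ whose infimum over $\R^N$ is $0$. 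Second, in the identification of the limits of $u_\infty(\cdot+k_n)$ along $|x_n|\to\infty$, your mechanism for excluding the trivial limit does not work as stated: finitely many iterations of $u^{(n+1)}=T[u^{(n)}]+f_\infty$ only propagate positivity to a \emph{bounded} neighbourhood of $\supp f_\infty$ (each step enlarges the region by at most $r$), and knowing $u^{(n_0)}\geq\e\phi_p$ on one large cell does not let you conclude $T[u^{(n_0)}]\geq\e\phi_p$ globally, because the subsolution inequality $T[\e\phi_p]\geq\e\phi_p$ uses the full periodic $\phi_p$, not its restriction to a compact set. So the dichotomy ``$v_\infty\equiv 0$ or $v_\infty\equiv U_*$'' is never resolved.

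The paper closes both gaps with the same tool, which is the technical heart of its proof and is absent from your proposal: a \emph{compactly supported} almost principal eigenfunction $\phi_\e\geq 0$, positive on a large ball $B_R$ and vanishing outside, with $L\phi_\e\geq(\l-\e)\phi_\e$ (Proposition \ref{approx}). Its construction requires showing that the Dirichlet-type truncated eigenvalues $\lambda_R$ increase to $\l$, which is done via a Rayleigh--Ritz formula and is precisely where the symmetry hypothesis \eqref{sym V} enters --- a hypothesis your proof never uses, which is a warning sign. With $\phi_\e$ in hand, one slides $\eta\phi_\e$ under any supersolution $u$ of \eqref{eq stat} ($Tu\leq u$, which both $u_\infty$ and any solution of \eqref{eq stat} satisfy), uses \eqref{hyp g} and $\l-\e>1$ to force the contact value $\eta^\star$ above a constant \emph{independent of $u$}, and then applies the same bound to every integer translate $u(\cdot+k)$ to conclude $\inf_{\R^N}u>0$ (Lemma \ref{inf}). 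That uniform lower bound is what launches the reverse sliding in the uniqueness proof and what guarantees that every subsequential limit of $u_\infty(\cdot+k_n)$ is bounded below by a positive constant, hence equals $U_*$. Without Proposition \ref{approx} and Lemma \ref{inf}, or a substitute for them, your proof of the $\l>1$ case is incomplete.
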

This completely answers Question \ref{q1} in the periodic case. In the homogeneous cas, this result was obtained by Diekmann and Thieme, see for instance \cite{T2}, Theorems 2.6a and 2.8c. In the homogeneous case, we have the following explicit formula $\l = g^{\prime}(0)\int_{\tau>0}\int_y \Lambda(\tau,y)d\tau dy$.\\

Our next result is dedicated to answering Question \ref{q2}, by studying the existence and non-existence of traveling waves to \eqref{eq waves}. Remembering Remark \ref{req tw} above, it is necessary, in order for such a result to hold true, to have some decay on $\Gamma, V$. Hence, we assume in the next results that, for every $\rho\geq 0$, there are $C, \e >0$ such that,
\begin{equation}\label{1 hyp waves per}
\Gamma(t,x,y) \leq C e^{-\e t}e^{-\rho \vert x - y \vert}.
\end{equation}
This hypothesis may be slightly relaxed, see Section \ref{sec tw}, but we will assume it when studying the existence of waves.



For $\rho , c\geq 0, e \in \mathbb{S}^{N-1}$, we define
$$
V_{\rho,c}(x,y) := \int_{0}^{+\infty}  \Gamma(\tau,x,y)e^{-\rho c \tau} d\tau,
$$
and the operator $\mc S_{\rho,c,e}$
\begin{equation}\label{Slc}
(\mc S_{\rho,c,e}\phi)(x) :=  \int_{\R^N}g^{\prime}(0)V_{\rho,c}(x,y)e^{-\rho(y- x) \cdot e} \phi(y)  dy.
\end{equation}
We let $\lambda_1(\rho,c,e)$ denote the principal periodic eigenvalue of the operator $\mc S_{\rho,c,e}$ acting on $C_{per}^{0}(\R^N)$. Observe that $\lambda_1(0,0,e)=\l$ defined above. We define
\begin{equation}\label{def c}
c^{\star}(e) := \inf\Big\{ c\geq 0 \ : \ \exists \rho \geq 0 \ \text{ such that }\ \l(\rho,c,e) \leq 1   \Big\}.
\end{equation}

%
\begin{theorem}\label{th waves}
Assume that $\Gamma$ satisfies \eqref{1 hyp waves per} and that $\l >1$. Let $U$ be the unique positive solution of~\eqref{eq stat} given by Theorem \ref{th threshold}. Then, for every $e\in \mathbb{S}^{N-1}$ and for every $c > c^\star(e)$, there is a generalized traveling wave solution to \eqref{eq waves}, connecting $0$ to $U$ in the direction $e$ with speed $c$. 
%
%
\end{theorem}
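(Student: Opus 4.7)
The plan is to construct the wave via an explicit exponential super-solution built from the Krein--Rutman eigenfunction of $\mc{S}_{\rho,c,e}$, matched with a Fife--McLeod type sub-solution, and then to extract a solution sandwiched between them by a monotone approximation scheme on semi-infinite time intervals. Since $c>c^{\star}(e)$, the definition \eqref{def c} together with the continuity of $\rho\mapsto\lambda_{1}(\rho,c,e)$ --- itself a consequence of \eqref{hyp waves per} and of Krein--Rutman perturbation theory for compact operators --- yields some $\rho\in(0,\rho_{0})$ with $\lambda_{1}(\rho,c,e)<1$. Let $\phi_{\rho}\in C^{0}_{per}(\R^{N})$, $\phi_{\rho}>0$, be the associated principal eigenfunction. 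A direct computation (substituting into \eqref{eq waves} and using the definition of $\mc{S}_{\rho,c,e}$ in \eqref{Slc}) shows that the ansatz $w_{\rho}(t,x):=\phi_{\rho}(x)e^{-\rho(x\cdot e-ct)}$ satisfies
\[
\int_{0}^{+\infty}\!\!\int_{\R^{N}}\Gamma(\tau,x,y)\,g'(0)\,w_{\rho}(t-\tau,y)\,dy\,d\tau=\lambda_{1}(\rho,c,e)\,w_{\rho}(t,x),
\]
so $w_{\rho}$ is a strict super-solution of the linearization of \eqref{eq waves}.

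\textbf{Super- and sub-solutions of the nonlinear problem.} Using $g(s)\leq g'(0)s$, the monotonicity of $g$ and the fact that $U$ is a stationary solution of \eqref{eq stat}, one verifies that, for $A$ large enough,
\[
\bar u(t,x):=\min\bigl(U(x),\,A\phi_{\rho}(x)e^{-\rho(x\cdot e-ct)}\bigr)
\]
is a super-solution of \eqref{eq waves}, i.e.\ $\bar u(t,x)\geq\int_{0}^{+\infty}\!\int_{\R^{N}}\Gamma(\tau,x,y)g(\bar u(t-\tau,y))\,dy\,d\tau$. For the sub-solution I would pick $\rho'\in(\rho,\min(2\rho,\rho_{0}))$ with $\lambda_{1}(\rho',c,e)<1$ (possible by continuity in $\rho$), let $\phi_{\rho'}$ be its eigenfunction, and set
\[
\underline u(t,x):=\max\bigl(0,\,\phi_{\rho}(x)e^{-\rho(x\cdot e-ct)}-M\phi_{\rho'}(x)e^{-\rho'(x\cdot e-ct)}\bigr)
\]
for $M$ large. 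The lower bound $g(s)\geq g'(0)s-Cs^{2}$ from \eqref{hyp g} produces a quadratic error of order $e^{-2\rho(x\cdot e-ct)}$, which $\rho'<2\rho$ lets us absorb into the negative linear correction, giving the sub-solution inequality on the set $\{\underline u>0\}$ (it being trivial elsewhere).

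\textbf{Sandwiched solution via monotone approximation.} For each $n\in\N$, solve \eqref{eq} on $[-n,+\infty)\times\R^{N}$ with $\bar u$ prescribed on $(-\infty,-n]$; Proposition \ref{prop cv} (after a time shift) yields a unique bounded solution $u_{n}$. Iterated comparison, based on the monotonicity of $g$ and of the integral operator, gives $\underline u\leq u_{n+1}\leq u_{n}\leq\bar u$, so $u_{n}\searrow u_{\infty}$ pointwise. The equicontinuity provided by \eqref{hyp gamma 1} together with dominated convergence let us pass to the limit in the integral equation \eqref{eq waves}, producing a bounded solution $u_{\infty}$ on $\R\times\R^{N}$ with $\underline u\leq u_{\infty}\leq\bar u$.

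\textbf{Identification of the asymptotic profiles and main obstacle.} The bound $u_{\infty}\leq\bar u$ and the periodicity of $\phi_{\rho}$ immediately give $\sup_{x\cdot e-ct\geq\delta}|u_{\infty}(t,x)|\to 0$ as $\delta\to+\infty$. The opposite limit is the crux: take a sequence $(t_{n},x_{n})$ with $x_{n}\cdot e-ct_{n}\to-\infty$, write $x_{n}=k_{n}+\tilde x_{n}$ with $k_{n}\in\Z^{N}$ and $\tilde x_{n}\to\bar x$ along a subsequence, and consider the shifts $v_{n}(t,x):=u_{\infty}(t+t_{n},x+k_{n})$. By the periodicity of $\Gamma$ and the equicontinuity from \eqref{hyp gamma 1}, a subsequence converges to a bounded solution $v$ of \eqref{eq waves} bounded below by a positive constant (since $u_{\infty}\geq\underline u$ at the shifted points). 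Passing $t\to+\infty$ in $v$ yields a positive bounded solution of \eqref{eq stat}, which by the uniqueness part of Theorem \ref{th threshold} must be $U$; a monotonicity-in-$t$ argument then upgrades this to $v(t,\cdot)\equiv U$ for every $t\in\R$, giving the desired uniform limit. I expect this identification --- together with the verification of the sub-solution inequality across the transition hypersurface where $\underline u$ vanishes, where the non-local term $\int\Gamma g(\underline u)$ couples the two regions --- to be the main obstacle, both steps crucially relying on \eqref{sym V} via the uniqueness of the positive periodic stationary solution from Theorem \ref{th threshold}.
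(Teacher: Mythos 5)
Your overall strategy is the same as the paper's: an exponential supersolution $\min\{U,\phi_{\rho}e^{-\rho(x\cdot e-ct)}\}$ built from the Krein--Rutman eigenfunction, a Fife--McLeod type subsolution $\max\{0,\phi_{\rho}e^{-\rho\xi}-M\phi_{\rho'}e^{-\rho'\xi}\}$ with $\rho<\rho'<2\rho$ to absorb the quadratic error from \eqref{hyp g}, a monotone scheme trapped between them (the paper iterates $u_{n+1}=\mc T u_n$ from $u_0=\ol u$ rather than solving on $[-n,+\infty)$, but this is a cosmetic difference), and identification of the state behind the front by translating along a minimizing sequence and invoking the Liouville-type uniqueness of Proposition \ref{sol stat}. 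The concerns you flag at the end (the transition set $\{\ul u=0\}$, and the back-state identification) are handled exactly as you sketch and are not real obstacles.

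There is, however, one genuine error in your subsolution. You take the \emph{same} $\rho$ for the leading term of $\ul u$ as for the supersolution, i.e.\ a $\rho$ with $\lambda_1(\rho,c,e)<1$. Writing $\xi=x\cdot e-ct$ and $v=\phi_{\rho}e^{-\rho\xi}-M\phi_{\rho'}e^{-\rho'\xi}$, the computation \eqref{calc waves} gives
\begin{equation*}
\mc T(\ul u)\;\geq\;\mc L(v)-C\mc L(v^2)\;=\;\lambda_1(\rho,c,e)\,\phi_{\rho}e^{-\rho\xi}+O\!\left(e^{-\rho'\xi}\right),
\end{equation*}
while $\ul u=\phi_{\rho}e^{-\rho\xi}+O(e^{-\rho'\xi})$ on $\{v>0\}$, a set which contains all large $\xi$ because $\rho'>\rho$. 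Since every correction term decays strictly faster than $e^{-\rho\xi}$, the inequality $\mc T(\ul u)\geq\ul u$ forces $\lambda_1(\rho,c,e)\geq 1$ at leading order; with your choice $\lambda_1(\rho,c,e)<1$ the subsolution inequality fails near the leading edge and cannot be repaired by enlarging $M$. The fix is what the paper does: since $\lambda_1(0,c,e)=\lambda_1>1$ and $\rho\mapsto\lambda_1(\rho,c,e)$ is continuous, and since $c>c^{\star}(e)$ provides some rate at which the eigenvalue drops below $1$, one can choose $0<\rho<\rho'<2\rho$ with $\lambda_1(\rho,c,e)\geq 1$ and $\lambda_1(\rho',c,e)<1$; the subsolution is built with this $\rho$, which in general differs from the rate used in the supersolution. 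Everything else in your argument then goes through.
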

In the homogeneous case, Theorem \eqref{th waves} was proven by Diekmann and Thieme independently in \cite{Di1, T1}. In this specific case, $c^\star(e)$ does not depend on $e$, the equation is isotropic.\\


We now turn to the non-existence of traveling waves with speed lesser than $c^\star(e)$. Such results are usually more technical to prove. A standard approach could be to prove that any solution of the the renewal equation \eqref{eq} asymptotically spreads with speed $c^\star(e)$ in the direction $e$, and to conclude by comparison. This strategy was used by Diekmann in \cite{Di2} in the homogeneous framework. However, here, we adopt a different approach, inspired by the study of heterogeneous reaction-diffusion equations, that consists in using some arguments from complex analysis to build adequate subsolutions.

To prove the next result, we assume, in addition to all the hypotheses above, that there are $T,C>0$ such that
\begin{equation}\label{non exp}
\Gamma(t_2,x,y) \leq C \Gamma(t_1,x,y), \quad \forall  t_2, t_1>0, \ t_2 \geq t_1 + T, \ x,y \in \R^N.
\end{equation}
\begin{theorem}\label{th non}
Assume that $\Gamma$ satisfies, in addition to the hypotheses above, \eqref{1 hyp waves per} and \eqref{non exp} and that  $\l >1$.

Then, for every $c\in [0,c^{\star}(e))$, for every $U>0$, $U\in C^0_{per}$, there are no generalized traveling waves in the direction $e$, with speed $c$, connecting $0$ to $U$.
\end{theorem}

\subsubsection{Application to a SIR model}\label{sec res 2}

Let us now present an application of our results to the SIR system \eqref{SIR 2}. We mention that other models from epidemiology or biology could be studied this way, see for instance \cite{TZ} for some (homogeneous) examples.\\

When considering the system \eqref{SIR 2}, we always assume that 
$\mu, \alpha \in C^0_{per}(\R^N)$, $K \in C^0(\R^N\times \R^N)$, with $\alpha,\mu,K>0$ and $K$ is periodic in the sense that $K(\cdot +k,\cdot+k) = K$ for all $k\in \Z^N$, symmetric in the sense that $K(x,y)=K(y,x)$ for all $x,y\in \R^N$, and decays faster than any exponential in the sense that, for all $\rho>0$, there is $C>0$ such that
$$
 K(x,y)\leq Ce^{-\rho\vert x - y\vert}\quad \text{ for every } \ x, y \in \R^N.
$$

Let us start with defining adequate notions of propagation and fading out for the SIR system \eqref{SIR 2}.

\begin{definition}[Propagation for SIR models]\label{def prop SIR}
We say that the epidemic propagates in \eqref{SIR 2} with initial datum $(S_0,I_0)$ if the solution $(S(t,x),I(t,x))$ is such that $S(t,x)$ converges to some $S_{\infty}(x) \in L^{\infty}(\R^N)$ locally uniformly as $t$ goes to $+\infty$ and if
$$
\sup_{x\in \R^N}\left( S_{\infty}(x) - S_0(x) \right)<0.
$$
We say that the epidemic fades out if $S$ converges similarly so some $S_{\infty}$ such that 
$$
 \vert S_{\infty}(x) - S_0(x)\vert  \underset{\vert x \vert \to +\infty}{\longrightarrow} 0.
$$


\end{definition}

In other terms, the epidemic propagates if and only if a non-negligible number of infections occur everywhere, even far away from the initial focus of infection. Of course, there will always be infections on the support of $I_0$.

\begin{definition}[Traveling waves for SIR models]\label{def tw SIR}
Let $S_{-\infty}, S_{+\infty} \in L^{\infty}(\R^N)$ be such that $\inf_{x\in\R^N}S_{\pm \infty} >0$. We say that $(S(t,x),I(t,x))$ is a traveling wave in the direction $e\in\mathbb{S}^{N-1}$ with speed $c> 0$ for \eqref{SIR 2} connecting $(S_{-\infty}(x),0)$ to $(S_{+\infty}(x),0)$ if it solves \eqref{SIR 2} for every $t\in \R$ and if
$$
\sup_{x\cdot e - ct \leq \delta}\vert S(t,x) - S_{+\infty}(x) \vert \underset{\delta \to -\infty}{\longrightarrow} 0 \ \text{ and }\ \sup_{x\cdot e - ct \geq\delta }\vert S(t,x) - S_{-\infty}(x) \vert \underset{\delta \to +\infty}{\longrightarrow} 0,
$$
and
$$
\sup_{x\cdot e - ct \leq \delta} I(t,x)  \underset{\delta \to -\infty}{\longrightarrow} 0 \ \text{ and }\ \sup_{x\cdot e - ct \geq\delta } I(t,x)  \underset{\delta \to +\infty}{\longrightarrow} 0.
$$
\end{definition}

Then, we have the following result concerning the heterogeneous SIR model \eqref{SIR 2}.

\begin{theorem}\label{th sir 1}
Consider the SIR system \eqref{SIR 2} with $\mu, \alpha, K$ satisfying the above hypotheses. Take $S_0 \in C^0_{per}(\R^N)$ positive and let $\l$ be the principal periodic eigenvalue of the operator
 \begin{equation}\label{eq op SIR}
 \phi \mapsto \int_{\R^N}\frac{\alpha(x)S_0(y)}{\mu(y)}K(x,y) \phi(y)dy,
  \end{equation}
 acting on $C^0_{per}(\R^N)$.
 Then
 \begin{itemize}
 \item If $\l>1$, then, for every $I_0 \not\equiv 0$, $I_0\geq 0$ compactly supported, the epidemic propagates for the initial datum $(S_0,I_0)$. In addition, $S(t,x) \to S_{\infty}(x) \in C^0_{per}(\R^N)$, loc. unif. in $x\in \R^N$ as $t$ goes to $+\infty$, and
 $$
 \vert S_{\infty}(x) - \mc S(x) \vert \underset{\vert x \vert \to +\infty}{\longrightarrow} 0,
 $$
where $\mc S = S_0 e^{-U}$, with $U$ the unique positive bounded solution of
 \begin{equation}\label{f s sir}
 U(x) =   \int_{\R^N}\frac{\alpha(x)S_0(y)}{\mu(y)}K(x,y) (1- e^{-U(y)})dy.
 \end{equation}

 \item If $\l\leq 1$, the epidemic fades out for every $I_0$ compactly supported.

 \end{itemize}
 
 \end{theorem}
 
 Observe that we use the same notation for the principal periodic eigenvalue $\l$ of \eqref{eq op SIR} and \eqref{L}. This is because they coincide when $\Gamma$ is given by \eqref{gamma 2}.\\

 This theorem proves that the heterogeneous SIR system \eqref{SIR 2} satisfies a threshold phenomenon. It also indicates what the final population looks like far away from the initial focus of infection. Observe that the final repartition does not depend on $I_0$ far away from the initial focus of infection.\\

 Let us now consider the existence/non-existence of waves.
 \begin{theorem}\label{th sir 2}
 Consider the SIR system \eqref{SIR 2} with $\mu, \alpha, K$ satisfying the above hypotheses. For every $S_{-\infty} \in C_{per}^0(\R^N)$, $S_{-\infty} >0$, let $\l(\rho,c,e)$ be the principal periodic eigenvalue of the operator
  $$
 \phi \mapsto \int_{\R^N}\frac{\alpha(x)S_{-\infty}(y)}{\mu(y)+\rho c}K(x,y)e^{-\rho (y-x)\cdot e} \phi(y)dy,
 $$
 acting on $C^0_{per}(\R^N)$, and let $c^{\star}$ be given by \eqref{def c}.
 

Assume that $\lambda_1> 1$, and let $U$ be the unique positive bounded solution of \eqref{f s sir} with $S_0$ replaced by $S_{-\infty}$.

Then, there are traveling waves solutions to \eqref{SIR 2} connecting $(S_{-\infty},0)$ to $(S_{-\infty}e^{-U}, 0)$ with speed $c$ in the direction $e\in \mathbb{S}^{N-1}$ for every $c>c^{\star}(e)$.

On the other hand, for any $e\in\mathbb{S}^{N-1}$, for any $S_{+\infty}>0$ periodic, for any $c \in [0,c^{\star}(e))$, there are no traveling waves connecting $(S_{-\infty},0)$ to $(S_{+\infty}, 0)$.
\end{theorem}
The paper is organized as follows: in Section \ref{sec th}, we study equation \eqref{eq}. We present some technical results in Section \ref{sec th spec}, where we study the operator $L$ defined by \eqref{L}. We prove the threshold phenomenon, Theorem \ref{th threshold}, in Section \ref{sec threshold}. Section~\ref{sec tw} is dedicated to the study of equation \eqref{eq waves}. In Section \ref{sec ex}, we prove the existence of traveling waves, Theorem \ref{th waves}, and we prove the non-existence result Theorem \ref{th non} in Section \ref{sec non}. Finally, we apply our results to the the case of the SIR system \eqref{SIR 2} in Section \ref{sec comp}.


\section{The threshold phenomenon}\label{sec th}

This section is dedicated to the proof of Theorem \ref{th threshold}, that states that $\l$, the principal periodic eigenvalue of the operator $L$, defined by \eqref{L}, characterizes the long-time behavior of \eqref{eq}. The existence of $\l$ is given by the Krein-Rutman theorem.

For notational simplicity, we assume in the sequel that $g^\prime(0)=1$ in the definition of \eqref{L}. This can be done without loss of generality by replacing $\Gamma$ and $V$ by $g^\prime(0)\Gamma$, $g^\prime(0)V$.

\begin{theorem}[Krein-Rutman theorem, \cite{KR}]\label{th KR}
Let $E$ be a real Banach space ordered by a salient cone $K$ (i.e., $K\cap (-K) = \{ 0\} $) with non-empty interior. Let $L$ be a linear compact operator. Assume that $L$ is strongly positive (i.e., $L(K\backslash \{ 0\}) \subset \inter K$). Then, there exists a unique eigenvalue $\lambda_1$ associated with some $u_1 \in K \backslash \{0\}$. Moreover, for any other eigenvalue $\lambda$, there holds
$$
\lambda_1 > \Re (\lambda).
$$
\end{theorem}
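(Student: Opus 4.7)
The plan is to prove the Krein-Rutman theorem in three stages: (i) existence of a positive eigenvalue $\lambda_1$ with eigenvector $u_1 \in K\setminus\{0\}$, (ii) simplicity of $\lambda_1$ (uniqueness of the positive eigenvector up to scaling), and (iii) the strict spectral gap $\lambda_1 > \Re(\lambda)$ for every other eigenvalue $\lambda$ of the complexification $L_{\mathbb{C}}$ acting on $E\oplus iE$.

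For existence, I would first show that the spectral radius $r(L)>0$. Pick $u_0 \in \inter K$ with $\|u_0\|=1$; strong positivity gives $Lu_0 \in \inter K$, hence there exists a ball $B(Lu_0,\delta)\subset K$, so $Lu_0 - c u_0 \in K$ for some $c>0$, i.e.\ $Lu_0 \geq c\, u_0$ in the order induced by $K$. Iterating with $L(K)\subset K$ yields $L^n u_0 \geq c^n u_0$, and since $u_0 \in \inter K$ makes the norm essentially monotone in a neighborhood of $u_0$, we obtain $\|L^n u_0\|^{1/n}\gtrsim c$, whence $r(L) \geq c > 0$ by Gelfand's formula. Compactness of $L$ and Riesz--Schauder theory then guarantee that $r(L)$ is an isolated eigenvalue of finite algebraic multiplicity. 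To produce a positive eigenvector, I would apply Schauder's fixed-point theorem to the map $T(x) := Lx/\phi(Lx)$ on the convex set $\{x \in K : \phi(x)=1\}$ for a suitable strictly positive functional $\phi$ (obtained from Hahn--Banach using $\inter K \neq \emptyset$); the resulting fixed point $u_1$ satisfies $Lu_1 = \lambda_1 u_1$ with $\lambda_1 := \phi(Lu_1) > 0$. Matching $\lambda_1 = r(L)$ follows from an elementary comparison. Strong positivity then yields $u_1 = \lambda_1^{-1} L u_1 \in \inter K$.

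For simplicity, suppose $v \in K\setminus\{0\}$ also satisfies $Lv = \lambda_1 v$. Because $u_1 \in \inter K$, the set $\{t\geq 0 : u_1 - tv \in K\}$ is non-empty, closed and bounded above; let $t^\star > 0$ be its supremum. If $u_1 - t^\star v \neq 0$, then $u_1 - t^\star v \in K\setminus\{0\}$ is itself a $\lambda_1$-eigenvector, so strong positivity forces $\lambda_1 (u_1 - t^\star v) = L(u_1 - t^\star v) \in \inter K$, whence $u_1 - (t^\star + \e) v \in K$ for small $\e > 0$, contradicting the definition of $t^\star$. Therefore $u_1 = t^\star v$, proving uniqueness of the positive eigenvector up to positive scaling. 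An analogous max-$t$ argument on the generalized eigenspace (writing any $w$ with $(L-\lambda_1)w = \alpha u_1$ and comparing to $u_1$) rules out non-trivial Jordan blocks, giving algebraic simplicity.

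The spectral gap is the main obstacle. The bound $|\lambda|\leq \lambda_1 = r(L)$ is immediate, but upgrading it to $\Re(\lambda) < \lambda_1$ requires ruling out $|\lambda|=\lambda_1$ with $\lambda \neq \lambda_1$. Assume this for contradiction and write $\lambda = \lambda_1 e^{i\theta}$ with $\theta \in (0,2\pi)$; let $z = p + iq \in E_{\mathbb{C}}$ be a corresponding eigenvector. For every $n\in\Z$, the real vector $w_n := \Re(e^{in\theta} z)$ satisfies $Lw_n = \lambda_1 w_{n+1}$, and the family $\{w_n\}$ is bounded. Since $u_1 \in \inter K$, define
\[
t^\star := \sup\bigl\{t\geq 0 : u_1 - t\, w_n \in K \text{ for all }n\in\Z\bigr\},
\]
which is finite and strictly positive. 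Applying the stage-(ii) maximality trick simultaneously over all $n$ and exploiting strong positivity together with uniqueness, one forces each $w_n$ to be a scalar multiple of $u_1$, contradicting $\lambda \neq \lambda_1$. The delicate point is that the abstract ordered-Banach-space setting affords no intrinsic ``modulus'' $|z|$, so the comparison must be carried out entirely through the cone $K$ and the positive eigenvector $u_1$; the positive eigenvector serves as a universal dominator against which all complex eigenvectors are calibrated.
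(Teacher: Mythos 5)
The paper offers no proof of this statement: it is the classical Krein--Rutman theorem, quoted from \cite{KR} and used as a black box, so your argument can only be judged on its own terms. Its skeleton (positivity of the spectral radius via $Lu_0\geq c\,u_0$ and a distance-to-the-cone bound, a fixed-point construction of a positive eigenvector, the sliding argument $t^\star=\sup\{t\geq 0:\ u_1-tv\in K\}$ for uniqueness) is the standard route, and stage (ii) is essentially complete. Stage (i), however, has a real flaw: the set $\{x\in K:\ \phi(x)=1\}$ on which you invoke Schauder is in general unbounded, because a functional with $\phi(x)\geq c\|x\|$ on $K$ need not exist --- it already fails for the cone of nonnegative functions in $C^0$, precisely the setting in which the paper applies the theorem --- so the map is not compact on a bounded convex set and Schauder does not apply as stated. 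The usual repair is to work on $K\cap \overline{B}_1$ with the perturbed map $x\mapsto (Lx+\e u_0)/\|Lx+\e u_0\|$ and let $\e\to 0$, using $Lu_0\geq c\,u_0$ to keep the limiting eigenvalue away from $0$. Relatedly, ``Riesz--Schauder guarantees that $r(L)$ is an eigenvalue'' is not immediate (it only yields an eigenvalue of modulus $r(L)$; that the positive number $r(L)$ itself is one is part of what must be proved), and the ``elementary comparison'' $\lambda_1=r(L)$ is load-bearing but glossed: the naive sandwich $-Mu_1\leq x\leq Mu_1$ giving $\|L^nx\|\leq C\lambda_1^n$ requires normality of the cone, which is not among the hypotheses. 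A normality-free substitute: for any eigenpair $Lz=\lambda z$ set $w_\theta:=\Re(e^{i\theta}z)$, pick $t>0$ with $tu_1-w_\theta\in K$ for all $\theta$ (possible since $u_1\in\inter K$ and $\{w_\theta\}$ is compact), apply $L^n$ and use closedness of $K$ to get $|\lambda|\leq\lambda_1$.

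Stage (iii) is where the genuine gap lies, and it is also misdirected. The theorem only claims $\Re(\lambda)<\lambda_1$, and this follows at once from $|\lambda|\leq\lambda_1$: if $\Re(\lambda)=\lambda_1$ then $|\lambda|\geq\lambda_1$, hence $|\lambda|=\lambda_1$ and $\Im(\lambda)=0$, i.e.\ $\lambda=\lambda_1$. Excluding peripheral eigenvalues ($|\lambda|=\lambda_1$, $\lambda\neq\lambda_1$) is the \emph{stronger} Krein--Rutman conclusion and is not needed here; and your sketch of it is not a proof: the ``simultaneous maximality over all $n$'' does not handle the degenerate touching case $u_1-t^\star w_n=0$, nor the dichotomy between rational and irrational rotation angle $\theta/2\pi$ (standard treatments pass through $L^q$ and one-dimensionality of the real eigenspace, or a density argument). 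Either replace stage (iii) by the two-line deduction above, or supply these missing steps if you really want $|\lambda|<\lambda_1$. Finally, note that the uniqueness asserted in the statement is that no \emph{other} eigenvalue possesses an eigenvector in $K\setminus\{0\}$; your stage (ii) only treats eigenvectors associated with $\lambda_1$ itself, but the same sliding comparison ($v\geq t^\star u_1$ with $t^\star>0$, apply $L$, compare the two eigenvalues in both directions) closes this quickly.
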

The Krein-Rutman theorem applies to the operator $L$ defined by \eqref{L}, on the Banach space $C_{per}^0(\R^N)$ (endowed with the $L^{\infty}$ norm) with $K$ being the cone of non-negative functions $K := \{\phi \in C^0_{per} \ : \ \phi \geq 0 \}$. The operator $L$ is linear and compact, owing to hypothesis \eqref{hyp gamma 1}. Indeed, it is readily seen that, for every $\e>0$, we can find $\delta>0$ such that, if $\vert x_1 - x_2 \vert \leq \delta$, we have, for every $\phi \in C_{per}^0(\R^N)$:
$$
\vert L\phi(x_1) - L\phi(x_2) \vert \leq \e \| \phi\|_{L^{\infty}}.
$$
This implies that the image of any bounded set of $C_{per}^{0}(\R^N)$ by $L$ is equicontinuous, and the Ascoli-Arzel\`a theorem, see \cite{B}, yields the compactness of $L$. The strong positivity of $L$ is readily seen: indeed, assume that there were $\phi \geq 0$, $\phi\not\equiv 0$ such that $L\phi(x_0)=0$ for some $x_0\in \R^N$. Then, because $V(x,y) >0$ for $x,y \in \R^N$ such that $\vert x -y\vert \leq r$, where $r$ is from the hypotheses in Section \ref{sec hyp}, we see that we should have $\phi(x) =0$ on $B_r(x_0)$. Iterating this argument, we would find that $\phi \equiv 0$; this proves the strong positivity of $L$.

\subsection{Approximation of the principal eigenvalue $\l$.}\label{sec th spec}

This section is dedicated to the proof of the following technical proposition:
\begin{prop}\label{approx}
Let $\l$ be the principal periodic eigenvalue of $L$. For every $\e>0$, there is $R_0 >0$ such that, for every $R>R_0$, there is $\phi_\e \in C^0(\R^N)$, strictly positive in $B_R$ and equal to zero elsewhere, such that
$$
\forall x\in \R^N, \quad L(\phi_\e)(x) \geq (\lambda_1 -\e)\phi_\e(x).
$$
\end{prop}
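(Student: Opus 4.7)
The plan is to approximate $\lambda_1$ from below by the principal eigenvalues $\mu_\rho$ of the restriction of $L$ to the ball $B_\rho$, and then to construct $\phi_\varepsilon$ by continuously tapering the associated Krein--Rutman eigenfunction to zero across a thin annulus. Concretely, I would first introduce, for each $\rho > 0$, the operator
$$L_\rho \phi(x) := g'(0)\int_{B_\rho} V(x,y)\phi(y)\,dy, \qquad x \in \bar B_\rho,$$
on $C^0(\bar B_\rho)$. Arguing exactly as at the beginning of this section, $L_\rho$ is linear, compact (by \eqref{hyp gamma 1} combined with Ascoli--Arzel\`a) and strongly positive (since $V(x,y) > 0$ when $|x-y|\leq r$), so Theorem~\ref{th KR} yields a principal eigenvalue $\mu_\rho > 0$ with a positive eigenfunction $\phi_\rho \in C^0(\bar B_\rho)$.

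The key technical step is to prove $\mu_\rho \nearrow \lambda_1$ as $\rho \to +\infty$. Here the symmetry hypothesis \eqref{sym V} plays the decisive role: endowing $L^2$ with the weight $w = \gamma_2/\gamma_1$, the kernel of $L$ rewrites as $g'(0)\tilde V(x,y)\gamma_2(x)\gamma_2(y)$, symmetric in $(x,y)$, so $L$ is self-adjoint in $L^2(\R^N, w\,dx)$. A Bloch/Fourier decomposition of this periodic, positive, self-adjoint operator shows that the top of its $L^2$-spectrum equals $\lambda_1$; hence $\mu_\rho \leq \lambda_1$ by the variational characterization. For the matching lower bound I would insert the test function $\phi_p \chi_\rho$ into the Rayleigh quotient, where $\chi_\rho$ is a smooth cutoff equal to $1$ on $B_{\rho/2}$ and supported in $B_\rho$; periodicity of $\phi_p\gamma_2$ together with \eqref{hyp gamma} then yield that the Rayleigh quotient tends to $\lambda_1$ as $\rho \to +\infty$.

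Given $\varepsilon > 0$, I would pick $\rho^\ast$ so that $\mu_\rho \geq \lambda_1 - \varepsilon/2$ for $\rho \geq \rho^\ast$, and choose $\delta > 0$ small enough (depending on $\varepsilon$ and on the modulus of continuity in \eqref{hyp gamma 1}). Setting $R_0 := \rho^\ast + \delta$, for every $R > R_0$ I put $\rho := R - \delta$ and define
$$\phi_\varepsilon(x) := \begin{cases} \phi_\rho(x), & x \in \bar B_\rho,\\ \phi_\rho\bigl(\rho\,x/|x|\bigr)\dfrac{R - |x|}{\delta}, & x \in B_R \setminus \bar B_\rho,\\ 0, & x \notin B_R. \end{cases}$$
This is continuous on $\R^N$, strictly positive on $B_R$, and vanishes outside. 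On $\bar B_\rho$ one has $L\phi_\varepsilon \geq L_\rho\phi_\rho = \mu_\rho\phi_\varepsilon \geq (\lambda_1-\varepsilon)\phi_\varepsilon$, and outside $B_R$ the inequality is trivial. On the annulus the verification is the delicate point: writing $F(x) := g'(0)\int_{B_\rho}V(x,y)\phi_\rho(y)\,dy$, which is continuous on $\R^N$ and equals $\mu_\rho\phi_\rho$ on $\bar B_\rho$, the uniform continuity \eqref{hyp gamma 1} gives $F(x) \geq \mu_\rho\phi_\rho(\rho x/|x|) - C\omega(\delta)$; combining this with $\phi_\varepsilon(x) \leq \phi_\rho(\rho x/|x|)$, with the margin $\mu_\rho-(\lambda_1-\varepsilon) \geq \varepsilon/2$, and with a uniform lower bound on $\phi_\rho$ coming from a Harnack-type argument based on $V(x,y)\geq \eta r$ near the diagonal, the desired inequality closes once $\delta$ is sufficiently small.

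The main obstacle is the spectral approximation $\mu_\rho \to \lambda_1$: the upper bound relies essentially on the self-adjoint structure provided by \eqref{sym V} (as the discussion following Theorem~\ref{th threshold} already emphasizes), while the lower bound amounts to verifying that a slowly varying truncation of the periodic eigenfunction $\phi_p$ is a near-optimal Rayleigh test function, in the spirit of the standard Bloch-wave-at-$\xi=0$ approximation.
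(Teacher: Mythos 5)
Your proposal is correct in substance and follows the same skeleton as the paper's argument: restricted operators on balls $B_R$, Krein--Rutman, a spectral approximation $\lambda_R\to\lambda_1$ exploiting the self-adjointness furnished by \eqref{sym V}, and a cutoff construction. The differences are in execution. For the upper bound $\lambda_R\le\lambda_1$, the paper never passes through the $L^2(\R^N)$-spectrum of $L$: it slides a multiple of the periodic eigenfunction $\phi_p$ onto $\phi_R$ and evaluates at a contact point, yielding $\lambda_R<\lambda_1$ directly (Step~1 of Proposition~\ref{cv vp}); your Bloch-decomposition route is workable but requires the additional (true, yet not free) fact that the top of the weighted $L^2$-spectrum of the periodic nonlocal operator coincides with its periodic principal eigenvalue. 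For the lower bound, the paper inserts $\phi_p$ itself, merely restricted to $B_R$, into the Rayleigh quotient \eqref{RR} --- no cutoff $\chi_\rho$ is needed, since the test function need not vanish on $\partial B_R$ --- and controls the boundary contribution by splitting $B_R$ into $B_{R-\sqrt R}$ and a thin annulus via \eqref{hyp gamma 1}; your choice of a cutoff equal to $1$ only on $B_{\rho/2}$ forces you to prove the commutator estimate $L(\phi_p\chi_\rho)\approx\chi_\rho L\phi_p$, which is avoidable. Finally, the paper tapers \emph{inward}, setting $\phi_\varepsilon=\phi_R\chi_R$ with $\chi_R=1$ on $B_{R-\eta}$, so that $R$ is fixed first and $\eta$ is chosen last using only $\min_{\overline{B_R}}\phi_R>0$ (no Harnack argument is needed: Krein--Rutman already gives strict positivity of the continuous $\phi_R$ on the compact $\overline{B_R}$). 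Your outward taper over $B_R\setminus B_{R-\delta}$ also closes on the annulus, but beware the circularity in your parameter choice: the admissible $\delta$ depends on $\min_{\overline{B_{R-\delta}}}\phi_{R-\delta}$, which itself depends on $\delta$; this is harmless but should be untangled, e.g.\ by cutting off inside $B_R$ as the paper does.
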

To prove this result, we introduce a family of operators $(L_R)_{R>0}$ whose principal eigenvalues will approximate $\l$:
\begin{equation}\label{LR}
L_R\phi(x) := \int_{B_R} V(x,y)\phi(y)dy.
\end{equation}
The operator $L_R$ acts on the Banach space $C^{0}(\ol B_R)$. Arguing as above, we can apply the Krein-Rutman theorem \ref{th KR} to $L_R$ on the Banach space $C^0(\ol B_R)$, 
 to get the existence of its principal eigenvalue, that we call $\lambda_R$. We let $\phi_R\in C^0(\ol B_R)$ denote a principal eigenfunction, $\phi_R>0$ on $\ol{B_R}$. Let us observe that $\lambda_R$ is characterized by a Rayleigh-Ritz formula.
\begin{lemma}\label{lem vp}
Let $\gamma(x) := \frac{\gamma_2(x)}{\gamma_1(x)}$, where $\gamma_1, \gamma_2$ are from \eqref{sym V}. Then, the principal eigenvalue $\lambda_R$ of $L_R$ is given by
\begin{equation}\label{RR}
\lambda_R = \sup_{\phi \in L^2_{\gamma}(B_R)}\frac{\int_{B_R}L_R\phi(x)\phi(x)\gamma(x)dx}{\int_{B_R}\phi^2(x)\gamma(x)dx},
\end{equation}
where $L^2_{\gamma}(B_R)$ is the space $L^2(B_R)$ endowed with the scalar product
$$
\langle f, g\rangle_{L^2_{\gamma}} := \int_{B_R} f(x)g(x) \gamma(x)dx.
$$
\end{lemma} 
\begin{proof}
Owing to the hypothesis \eqref{sym V}, the operator $L_R$ is self-adjoint on the space $L^{2}_\gamma(B_R)$. Moreover, it is compact (it is a Hilbert-Schmidt operator, owing to hypothesis \eqref{hyp gamma vp}, see \cite{B}). Therefore, we can apply the spectral theorem, and the usual Rayleigh formula gives us that $\t \lambda$, the largest eigenvalue of $L_R$ (on $L^2_{\mu}(B_R)$), is given by
\begin{equation}\label{RR tilde}
\t \lambda = \sup_{\phi \in L^2_{\gamma}(B_R)}\frac{\int_{B_R}L_R\phi(x)\phi(x)\gamma(x)dx}{\int_{B_R}\phi^2(x)\gamma(x)dx}.
\end{equation}
It is readily seen that $\t \lambda \geq \lambda_R >0$ (the strict positivity comes from the fact that $V\geq0$, $V\not\equiv 0$). Let $\t\phi \in L^2_{\gamma}(B_R)$ be an eigenfunction associated with $\t \lambda$. Up to considering $\vert \t \phi\vert$, we assume that $\t \phi \geq 0$ almost everywhere. Hypothesis \eqref{hyp gamma vp} yields that $ \t\phi$ is  bounded, because $\t\lambda>0$ and
$$
\forall x \in \ol B_R, \quad \t \lambda^2 \t\phi^2(x) \leq \sup_{z\in B_R}\left(\int_{B_R}V^2(z,y)dy\right) \left(\int_{B_R}{\t\phi}^2(y)dy\right).
$$
Now, hypothesis \eqref{hyp gamma 1} yields that $\t \phi\in C^0(\ol B_R)$. The uniqueness (up to multiplication by a scalar) of the principal eigenvalue given by the Krein-Rutman theorem \ref{th KR} implies that $\t \lambda = \lambda_R$, hence the result.
\end{proof}


We now prove that the sequence of principal eigenvalues $(\lambda_R)_{R>0}$ converges to the periodic principal eigenvalue $\l$.
\begin{prop}\label{cv vp}
The sequence $(\lambda_R)_{R>0}$ is increasing and it converges to $\l$ as $R$ goes to $+\infty$.
\end{prop}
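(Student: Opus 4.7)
The plan splits into three steps: monotonicity of $R \mapsto \lambda_R$, the uniform bound $\lambda_R \le \l$, and the matching lower bound $\liminf_{R\to\infty} \lambda_R \ge \l$. The first two give $\lambda_R \nearrow L$ for some $L \le \l$, and the third forces $L = \l$.

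Monotonicity is essentially free: if $R_1 < R_2$, any competitor $\phi \in L^2_\gamma(B_{R_1})$ in \eqref{RR} extends by zero to a competitor on $B_{R_2}$ without changing either the numerator or the denominator, so the supremum can only grow. For the upper bound, I would use the periodic principal eigenfunction $\phi_p$ of $L$ as a supersolution: positivity of $V$ gives $L_R \phi_p \le L\phi_p = \l \phi_p$ pointwise on $B_R$. Pairing this with $\phi_R$ and invoking the $L^2_\gamma(B_R)$ self-adjointness already exploited in the proof of \eqref{RR},
\[
\lambda_R \int_{B_R} \phi_R \phi_p \gamma\, dx = \int_{B_R} \phi_R (L_R \phi_p) \gamma\, dx \le \l \int_{B_R} \phi_R \phi_p \gamma\, dx,
\]
and strict positivity of $\phi_R, \phi_p, \gamma$ forces $\lambda_R \le \l$.

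The substantive step is the lower bound. Fix an auxiliary parameter $M>0$ and a continuous cutoff $\chi_{R,M}$ equal to $1$ on $B_{R-M}$ and supported in $B_R$. I plan to plug $\phi_p \chi_{R,M}$ into the Rayleigh--Ritz quotient \eqref{RR}. For $x \in B_{R-2M}$ the ball $\{y : |y-x| \le M\}$ lies inside $B_{R-M}$, hence
\[
L_R(\phi_p \chi_{R,M})(x) \ge \int_{B_{R-M}} V(x,y) \phi_p(y)\, dy \ge \l \phi_p(x) - \|\phi_p\|_\infty \sup_{z\in\R^N} \int_{|y-z|>M} V(z,y)\, dy.
\]
The tail supremum vanishes as $M \to \infty$ by \eqref{hyp gamma} together with the periodicity \eqref{hyp per} (Dini's theorem applied on the compact torus $[0,1]^N$, using the $L^1$-continuity in $x$ that follows from \eqref{hyp gamma 1}). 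Dividing through by the positive lower bound $\min_{[0,1]^N}\phi_p$ absorbs this into $L_R(\phi_p \chi_{R,M})(x) \ge (\l - \e_M) \phi_p(x)$ on $B_{R-2M}$, with $\e_M \to 0$ independent of $R$. Since $\chi_{R,M}\equiv 1$ on $B_{R-2M}$, the Rayleigh quotient is then at least
\[
(\l - \e_M) \cdot \frac{\int_{B_{R-2M}} \phi_p^2 \gamma\, dx}{\int_{B_R} \phi_p^2 \gamma\, dx},
\]
and periodicity of the integrand together with $|B_{R-2M}|/|B_R| \to 1$ forces the ratio to tend to $1$ as $R \to \infty$ with $M$ fixed. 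Hence $\liminf_R \lambda_R \ge \l - \e_M$, and letting $M \to \infty$ closes the argument.

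The main obstacle is the bookkeeping in the last step: one must shrink the domain of the pointwise estimate by a boundary layer of width $2M$ to secure the lower bound on $L_R(\phi_p \chi_{R,M})$, yet simultaneously arrange that this boundary layer contribute negligibly to the normalizing integral. The two errors are decoupled by sending $R \to \infty$ first (killing the volume-ratio loss via periodicity) and only then $M \to \infty$ (killing the pointwise error from truncating the kernel tail).
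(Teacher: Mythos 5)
Your proof is correct, and it reaches the same destination as the paper by a partly different road, so a short comparison is in order. For the convergence itself the two arguments share the same engine: plug the periodic eigenfunction $\phi_p$ (the paper uses $\phi_p$ restricted to $B_R$, you use $\phi_p\chi_{R,M}$) into the Rayleigh--Ritz formula \eqref{RR}, and reduce everything to the uniform tail estimate $\sup_{x}\int_{\vert y-x\vert\geq M}V(x,y)\,dy\to 0$, which both of you extract from periodicity \eqref{hyp per} plus the $L^1$-equicontinuity \eqref{hyp gamma 1} (the paper by a compactness/contradiction argument with translations to the unit cell, you by Dini on the torus --- equivalent in substance; note that \eqref{hyp gamma} alone would not suffice here, so your parenthetical appeal to \eqref{hyp gamma 1} is doing real work, and the continuity in $z$ of $z\mapsto\int_{\vert y-z\vert\geq M}V(z,y)\,dy$ that Dini requires deserves the one-line check that the symmetric difference of the two truncation sets is a thin annulus). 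Your bookkeeping differs: the paper avoids a cutoff and instead splits the error integral over $B_{R-\sqrt R}$ and the annulus $B_R\setminus B_{R-\sqrt R}$, sending $R\to\infty$ in one shot, whereas you decouple the two errors with the auxiliary parameter $M$ and an iterated limit; both work. Where you genuinely diverge is in the monotonicity and the bound $\lambda_R\leq\l$: the paper runs a Krein--Rutman sliding argument (take the optimal multiple $M^\star\phi_{R'}\geq\phi_R$, find a contact point, use strict positivity of $V$), which yields \emph{strict} inequalities $\lambda_R<\lambda_{R'}<\l$; you instead use the variational characterization (extension by zero of competitors for monotonicity, and the self-adjoint pairing $\lambda_R\int\phi_R\phi_p\gamma=\int\phi_R(L_R\phi_p)\gamma\leq\l\int\phi_R\phi_p\gamma$ for the upper bound). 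Your route is shorter and cleaner once \eqref{RR} is in hand, but it only gives non-strict monotonicity; that is immaterial for the convergence statement, though the strict version is what the paper reuses verbatim in Proposition \ref{dec}, so keep the sliding argument in your toolbox for that later step.
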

\begin{proof}
\emph{Step 1. The sequence $(\lambda_R)_{R>0}$ is increasing.}\\
Let $0<R<R'$ be fixed, and let $\lambda_R, \lambda_{R'}$ be the principal eigenvalues of the operators $L_R, L_{R'}$ respectively. Let $\phi_{R}, \phi_{R'}$ denote some associated positive principal eigenfunctions. Define
$$
M^{\star} := \min\{ M>0 \ : \ M\phi_{R'} \geq \phi_R\ \text{ on }\ B_R\}.
$$
Then, by continuity, there is $x_0 \in \ol B_R$ such that $M^\star\phi_{R'}(x_0) = \phi_R(x_0)$. Hence,
$$
\lambda_R \phi_R(x_0) = L_{R}\phi_R(x_0) \leq L_{R}(M^{\star}\phi_{R'})(x_0) < M^\star L_{R'}\phi_{R'}(x_0) = \lambda_{R'}M^\star \phi_{R'}(x_0).
$$
The strict inequality comes from the fact that $V\geq 0$, $V\not\equiv 0$. This implies that
$$
\lambda_R <\lambda_{R'}. 
$$
We prove similarly that
$$
\lambda_R <\lambda_1,
$$
where $\l$ is the principal periodic eigenvalue of the operator $L$ defined by \eqref{L}.

\medskip
\emph{Step 2. Convergence to $\l$.}\\
We let $\phi_p>0$ be a periodic principal eigenfunction of $L$ associated with the eigenvalue $\l$. Owing to the Rayleigh-Ritz formula \eqref{RR} for $\lambda_R$, we have, for every $R>0$,
$$
\lambda_R \geq \frac{\int_{B_R}L_R\phi_p(x)\phi_p(x)\gamma(x)dx}{\int_{B_R}\phi_p^2(x)\gamma(x)dx}.
$$
Let us prove that
\begin{equation}\label{conv vp}
\frac{\int_{B_R}L_R\phi_p(x)\phi_p(x)\gamma(x)dx}{\int_{B_R}\phi_p^2(x)\gamma(x)dx} \underset{R \to +\infty}{\longrightarrow} \frac{\int_{[0,1]^N}L\phi_p(x)\phi_p(x)\gamma(x)dx}{\int_{[0,1]^N}\phi_p^2(x)\gamma(x)dx} = \l.
\end{equation}
Because the sequence $(\lambda_R)_{R>0}$ is bounded from above by $\l$, proving \eqref{conv vp} will yield the result. Observe that, because $\phi_p$, $L\phi_p$ and $\gamma$ are periodic, we have
$$
\frac{1}{\vert B_R \vert}\int_{B_R}\phi_p^2(x)\gamma(x)dx \underset{R\to+\infty}{\longrightarrow} \int_{{[0,1]^N}}\phi_p^2(x)\gamma(x)dx
$$
and
$$
\frac{1}{\vert B_R \vert}\int_{B_R}L\phi_p(x)\phi_p\gamma(x)dx \underset{R\to+\infty}{\longrightarrow} \int_{{[0,1]^N}}L\phi_p(x) \phi_p(x)\gamma(x)dx.
$$
Therefore, to prove \eqref{conv vp}, it is sufficient to show that
$$
\frac{1}{\vert B_R \vert}\left( \int_{B_R}(L_R\phi_p(x) - L\phi_p(x))\phi_p(x)\gamma(x)dx \right)\underset{R \to +\infty}{\longrightarrow} 0,
$$
or, equivalently, that
$$
\frac{1}{\vert B_R \vert} \int_{x\in B_R}\int_{y\in B_R^c}V(x,y)\phi_p(y)\phi_p(x)\gamma(x)dxdy\underset{R \to +\infty}{\longrightarrow} 0.
$$
We have (we let $C>0$ denote an arbitrary constant, independent of $R$)
\begin{equation*}
\begin{array}{llc}
\frac{1}{\vert B_R \vert} \int_{x\in B_R}\int_{y\in B_R^c}V(x,y)\phi_p(y)\phi_p(x)\gamma(x)dxdy\\
\leq \frac{C}{\vert B_R \vert} \int_{x\in B_{R-\sqrt{R}}}\int_{y\in B_R^c}V(x,y)dxdy + \frac{C}{\vert B_R \vert} \int_{x\in B_R \backslash B_{R-\sqrt{R}}}\int_{y\in B_R^c}V(x,y)dxdy \\
\leq C\sup_{\vert x \vert \leq R-\sqrt{R}}\left( \int_{y\in B_R^c}V(x,y)dy\right) + C\frac{\vert B_R \backslash B_{R-\sqrt{R}} \vert}{\vert B_R \vert} \sup_{x\in\R^N}\left(\int_{y\in\R^N}V(x,y)dy\right) \\
\leq C\sup_{\vert x \vert \leq R-\sqrt{R}}\left( \int_{y\in B_R^c}V(x,y)dy\right) + \frac{C}{\sqrt{R}} \sup_{x\in\R^N}\left(\int_{y\in \R^N}V(x,y)dy\right).
\end{array}
\end{equation*}
To conclude, let us show that $\sup_{\vert x \vert \leq R-\sqrt{R}}\left( \int_{y\in B_R^c}V(x,y)dy\right)$ goes to zero as $R$ goes to $+\infty$. If this were not the case, we could find $\e>0$ and a sequence $(x_n)_{n\in\N}$ such that $\vert x_n \vert \leq n-\sqrt{n}$ for every $n\in \N$ and
\begin{equation}\label{abs int}
 \liminf_{n\to+\infty}\int_{y\in B_n^c}V(x_n,y)dy >\e.
\end{equation}
We can define a sequence $(k_n)_{n\in\N} \in \Z^N$ such that $x_n - k_n \in [0,1)^{N}$ for every $n\in \N$. Hence, owing to the hypotheses \eqref{hyp gamma 1}, \eqref{hyp per} we get
$$
\int_{y\in B_n^c}V(x_n,y)dy = \int_{y\in B_n^c(-k_n)}V(x_n-k_n,y)dy \leq \int_{\vert y \vert \geq \frac{\sqrt{n}}{2}}V(x_n-k_n,y)dy\underset{n\to+\infty}{\longrightarrow}0,
$$
which contradicts \eqref{abs int}. This proves the convergence and concludes the proof.
\end{proof}

We can now turn to the proof of Proposition \ref{approx}. We mention that a similar result was obtained by H. Berestycki, J. Coville and H.-H. Vo in \cite{BCV} in the context of non-local reaction-diffusion equations, however, the situation considered here allows for a simpler proof.
\begin{proof}[Proof of Proposition \ref{approx}]
Let $\e>0$ be fixed. Let $\phi_p>0$ be a positive principal periodic eigenfunction of $L$. Owing to Proposition \ref{cv vp}, we can find $R>0$ large enough so that $\lambda_R > \l -\frac{\e}{2}$, where $\lambda_R$ is the principal eigenvalue of the operator $L_R$. Let $\phi_R$ be a positive principal eigenfunction of $L_R$ associated with $\lambda_R$. For $\eta>0$, to be determined after, let $\chi_R\leq 1$ be a continuous function such that $\chi_R>0$ on $B_R$, $\chi_R = 1$ on $B_{R-\eta}$, and $\chi_R =0$ on $B_{R}^c$. We define
$$
\phi_\e(x) := \begin{cases} \phi_R(x)\chi_{R}(x) \quad &\text{ for }\ x\in B_R, \\ 0 \quad &\text{ for }\ x\in B^c_R. \end{cases}
$$
The function $\phi_\e$ is continuous on $\R^N$, strictly positive in $B_R$, zero elsewhere and compactly supported.
For $x\in B_R$, we have
$$
\begin{array}{rl}
L(\phi_\e)(x) &= \int_{B_R}V(x,y)\phi_R(y)\chi_R(y)dy \\
&\geq \lambda_R \phi_R(x) -  \| \phi_R\|_{L^\infty}\left(\int_{B_R \backslash B_{R-\eta}}V(x,y)dy\right) \\
&\geq (\lambda_R-\frac{\e}{2}) \phi_R(x)\chi_R(x) + \frac{\e}{2}\left(\min_{x\in \ol B_R}\phi_R(x)\right) \\ &\quad \quad \quad\quad\quad \quad \quad\quad \quad \quad-  \| \phi_R\|_{L^\infty}\left(\sup_{x\in B_R}\int_{B_R \backslash B_{R-\eta}}V(x,y)dy\right).
\end{array}
$$
Therefore,for $\eta$ small enough, independent of $x$, we have
$$
\forall x \in B_R, \quad L(\phi_\e)(x) \geq (\l -\e) \phi_\e(x).
$$
For $x\in B^c_R$, this inequality is readily verified, hence the result.
\end{proof}

\subsection{Long-time behavior of solutions of \eqref{eq}}\label{sec threshold}

This section is dedicated to the proof of Theorem \ref{th threshold}. For convenience, we let $T$ denote the nonlinear operator
\begin{equation}\label{def T}
(Tu)(x) := \int_{\R^N}V(x,y)g(u(y))dy.
\end{equation}
The operator $L$, defined by \eqref{L}, is the linearization of $T$. We start with a technical lemma.

\begin{lemma}\label{inf}
Assume that $\l >1$, where $\l$ is the principal periodic eigenvalue of~$L$. Let $u\in C^{0}(\R^N)$, $u>0$, be such that
$$
T(u)\leq u.
$$
Then
$$
\inf_{\R^N} u >0.
$$
\end{lemma}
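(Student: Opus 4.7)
The plan is to combine Proposition~\ref{approx} with a contact point (``sliding'') argument, reducing the global lower bound to a uniform bound on $[0,1]^N$ via the periodicity of $V$. First I fix $\e>0$ small enough that $\lambda_1 - \e > 1$ and apply Proposition~\ref{approx} to obtain $R > \sqrt{N}$ and a continuous function $\phi_\e$, supported in $\overline{B_R}$, strictly positive on $B_R$, satisfying $L\phi_\e \geq (\lambda_1 - \e)\phi_\e$ on $\R^N$. For an arbitrary $x_0 \in \R^N$, I pick $k_0 \in \Z^N$ with $x_0 - k_0 \in [0,1)^N \subset B_R$ and set $\tilde u(x) := u(x + k_0)$; a change of variable using \eqref{hyp per} gives $T\tilde u \leq \tilde u$ as well. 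It then suffices to produce a constant $c_0 > 0$, independent of $\tilde u$, such that $\tilde u \geq c_0$ on $[0,1]^N$.

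Since $\tilde u > 0$ is continuous and $\phi_\e$ vanishes on $\partial B_R$ while $\tilde u$ remains positive there, the ratio $\tilde u/\phi_\e$ is continuous on $B_R$ and blows up at its boundary, so
\[
\alpha^\star := \inf_{y \in B_R} \frac{\tilde u(y)}{\phi_\e(y)}
\]
is strictly positive, attained at some $y_0 \in B_R$ with $\phi_\e(y_0) > 0$, and $\alpha^\star \phi_\e \leq \tilde u$ on $\R^N$ (trivially outside $B_R$). Next I fix $M > 0$ so small that $g(M)/M > g'(0)/(\lambda_1 - \e)$; this is possible because $g(z)/z \to g'(0)$ as $z \to 0^+$ and $\lambda_1 - \e > 1$. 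I claim $\alpha^\star \geq M/\|\phi_\e\|_\infty$. Indeed, if not, then $\alpha^\star \phi_\e \leq M$ everywhere, so by \eqref{hyp g concave} (i.e.\ $g(z)/z$ decreasing) one has $g(\alpha^\star \phi_\e(y)) \geq (g(M)/M)\alpha^\star \phi_\e(y)$ pointwise. Using the monotonicity of $T$ (since $g$ is increasing), evaluating at $y_0$, and invoking $L\phi_\e \geq (\lambda_1 - \e)\phi_\e$, the computation reads
\[
\alpha^\star \phi_\e(y_0) = \tilde u(y_0) \geq T\tilde u(y_0) \geq T(\alpha^\star\phi_\e)(y_0) \geq \frac{g(M)}{Mg'(0)}\alpha^\star L\phi_\e(y_0) \geq \frac{g(M)}{Mg'(0)}(\lambda_1 - \e)\alpha^\star \phi_\e(y_0),
\]
and dividing by the positive number $\alpha^\star \phi_\e(y_0)$ yields $1 \geq (g(M)/(Mg'(0)))(\lambda_1 - \e) > 1$, a contradiction. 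Therefore $\tilde u \geq (M/\|\phi_\e\|_\infty)\phi_\e$, and since $\min_{[0,1]^N}\phi_\e > 0$, this delivers a uniform $c_0$ independent of $x_0$.

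The step I expect to be the real content is the sliding inequality above, and in particular the realization that Proposition~\ref{approx} provides a \emph{compactly supported} approximate eigenfunction: if one tried to run the same contact argument with the periodic principal eigenfunction $\phi_p$ instead, the ratio $\tilde u/\phi_p$ would need to be bounded on the whole of $\R^N$, which is precisely the conclusion we are trying to prove. The compact support of $\phi_\e$ forces the infimum to be attained in the interior of $B_R$, and the strict gap $\lambda_1 - \e > 1$ gives room to absorb the loss $g(M)/(Mg'(0)) < 1$ coming from the nonlinearity; this is what closes the argument.
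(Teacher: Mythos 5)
Your proof is correct and follows essentially the same strategy as the paper's: the compactly supported approximate eigenfunction from Proposition~\ref{approx}, a contact-point argument yielding a lower bound on the contact constant that is independent of $u$, and periodicity to propagate the resulting bound on $[0,1]^N$ to all of $\R^N$. The only substantive difference is how the nonlinearity is controlled: you use \eqref{hyp g concave} via the threshold $g(M)/M > g^{\prime}(0)/(\lambda_1-\e)$, which gives a clean strict-inequality contradiction at the contact point, whereas the paper uses the quadratic bound \eqref{hyp g} and therefore needs both a case distinction on the size of the contact constant and an extra step ruling out equality; your variant is marginally cleaner but not a different route.
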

\begin{proof}
Assume that $\l>1$ and that $u\in C^0(\R^N)$, $u>0$, is such that $Tu \leq u$. Let $\e>0$ be such that $\e<\l-1$. Owing to Proposition \ref{approx}, we can find $R>2\sqrt{N}$ ($N$ is the space dimension) and $\phi_\e \in C^0(\R^N)$, $\phi_\e > 0$ on $B_R$, $\phi_\e = 0$ elsewhere, such that
$$
(\l -\e)\phi_\e \leq L(\phi_\e).
$$
We define
$$
\eta^\star := \max\{ \eta >0 \ : \ u \geq \eta \phi_\e\}.
$$
We start to assume that $\eta^\star$ is such that
\begin{equation}\label{eta star}
\eta^\star \leq \frac{1}{C\|\phi_\e \|_{L^\infty}},
\end{equation}
where $C>0$ stands in the whole proof for the constant from \eqref{hyp g}.

By definition of $\eta^{\star}$, and by continuity, $u \geq \eta^\star \phi_\e$ and there is a contact point $x_0 \in B_R$ such that $u(x_0) = \eta^\star \phi_\e(x_0)$. Owing to the hypothesis \eqref{hyp g}, we have
$$
\begin{array}{llc}
T(\eta^\star \phi_\e) &\geq L(\eta^\star \phi_\e) - CL({\eta^\star}^2\phi_\e^2) \\
&\geq \eta^\star(1-C\eta^\star \|\phi_\e \|_{L^\infty})L(\phi_\e)\\
&\geq \eta^\star(1-C\eta^\star \|\phi_\e \|_{L^\infty})(\l-\e)\phi_\e.
\end{array}
$$
Therefore, because $T$ is order-preserving,
$$
\eta^\star(1-C\eta^\star \|\phi_\e \|_{L^\infty})(\l -\e)\phi_\e \leq T(\eta^\star \phi_\e)\leq T(u) \leq u.
$$
Observe that it is not possible to have
$$
1 \leq (1-C\eta^\star \|\phi_\e \|_{L^\infty})(\l -\e).
$$
Indeed, this would imply that
$$
\eta^\star \phi_\e \leq T(\eta^\star \phi_\e)\leq T(u) \leq u,
$$
and then, evaluating at $x_0$, we would get $T(\eta^\star \phi_\e)(x_0) = T(u)(x_0)$, which would yield, owing to the non-negativity of $V$, that $g(\eta^\star \phi_\e) \equiv g(u)$. Because of the hypotheses on $g$, this would imply that $\eta^\star \phi_\e \equiv u$, which is not possible because $u$ is positive on $\R^N$, while $\phi_\e$ is compactly supported. Therefore
$$
(1-C\eta^\star \|\phi_\e \|_{L^\infty})(\l -\e) < 1,
$$
i.e.,
$$
\frac{\l - 1 -\e}{C\|\phi_\e \|_{L^\infty}(\l -1)}<\eta ^\star.
$$
In other terms, we have proven that, if $\eta^{\star}$ satisfies \eqref{eta star}, then $\eta^{\star}$ is greater than a positive constant independent of $u$. Clearly, this is also the case if \eqref{eta star} is not verified: in this case, we have directly $\eta^{\star} \geq \frac{1}{C\|\phi_\e\|_{L^{\infty}}}$. To sum up, in both cases, we have proven that there is $\kappa >0$, independent of $u$, such that
$$
\kappa\phi_\e(x) \leq u(x), \quad \text{ for } x\in \R^N.
$$
Because we took $R>2\sqrt{N}$, we have $ [0,1]^N \subset B_{\frac{R}{2}}$, hence
\begin{equation}\label{est cell}
\forall x\in [0,1]^N, \quad 0<\kappa\left(\min_{B_{\frac{R}{2}}}\phi_\e\right) \leq u(x).
\end{equation}
Because $\kappa$ is independent of $u$, we can apply \eqref{est cell} to $u(\cdot + k)$, for any $k\in \Z^N$, to find that
$$
\forall x\in \R^N, \quad 0<\kappa\left(\min_{B_{\frac{R}{2}}}\phi_\e\right) \leq u(x),
$$
hence the result.
\end{proof}

We now prove that $\l$ characterizes the existence of non-null solutions to \eqref{eq stat}.

\begin{prop}\label{sol stat}
Let $\l$ be the principal periodic eigenvalue of the operator $L$.
\begin{itemize}
\item If $\l > 1$, the equation \eqref{eq stat} has a unique non-negative, non-zero bounded solution. Moreover, this solution is periodic.

\item If $\l \leq 1$, the equation \eqref{eq stat} has no non-negative non-zero bounded solutions.
\end{itemize}
\end{prop}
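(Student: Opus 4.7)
The plan is to split the argument according to the sign of $\l - 1$.

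For $\l > 1$, I would obtain existence of a positive periodic solution by monotone iteration of the operator $T$ from \eqref{def T} starting from $\e\phi_p$: the lower bound $g(z) \geq g'(0) z - C z^2$ from \eqref{hyp g}, combined with $L\phi_p = \l\phi_p$ and $\inf\phi_p > 0$, forces $T(\e\phi_p) \geq \e\phi_p$ for all sufficiently small $\e > 0$. Because $T$ is order-preserving, preserves periodicity by \eqref{hyp per}, and satisfies the crude bound $T(v) \leq \|g\|_{L^\infty} \sup_x \int V(x,y)\,dy$, the iterates $T^n(\e\phi_p)$ form a non-decreasing, uniformly bounded sequence of positive continuous periodic functions. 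Their pointwise limit $U_p$ is a fixed point of $T$ by monotone convergence, periodic by construction, and continuous because \eqref{hyp gamma 1} ensures that $T$ maps bounded functions to continuous ones.

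For uniqueness in the same case, let $U$ be any bounded non-negative non-zero solution. The positivity $V(x,y) > \eta$ on $|x-y| \leq r$ propagates positivity via iterated applications of $U = T(U)$, so $U > 0$ on $\R^N$, and Lemma \ref{inf} then yields $\inf U > 0$. I would then deploy a sliding argument: set $\tau^\star := \inf\{t > 0 \ :\ tU_p \geq U\}$, finite because $\inf U_p > 0$ and $U$ is bounded. If $\tau^\star > 1$, the strict decrease of $g(z)/z$ gives $g(\tau^\star U_p) < \tau^\star g(U_p)$ pointwise, hence $T(\tau^\star U_p) < \tau^\star U_p$, with $\tau^\star U_p - T(\tau^\star U_p)$ continuous, periodic and strictly positive, thus bounded below by some $\delta > 0$. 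Combined with $U = T(U) \leq T(\tau^\star U_p)$ this forces $U \leq (\tau^\star - \delta/\|U_p\|_{L^\infty}) U_p$, contradicting the minimality of $\tau^\star$. A symmetric argument with $s^\star := \sup\{s > 0 \ :\ sU_p \leq U\}$ yields $s^\star \geq 1$, so $U = U_p$.

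For $\l \leq 1$, I argue by contradiction: assume $U \geq 0$, $U \not\equiv 0$, bounded, with $U = T(U)$. Strong positivity as above gives $U > 0$ on $\R^N$. Define $\mu := \sup_{\R^N} U/\phi_p$, which is finite and positive by the bounds on $\phi_p$. Choose $(x_n)$ realizing this supremum and $k_n \in \Z^N$ with $x_n - k_n \in [0,1)^N$. The translates $U_n := U(\cdot + k_n)$ still satisfy $U_n = T(U_n)$ by the periodicity \eqref{hyp per} of $V$; \eqref{hyp gamma 1} makes the family $(U_n)$ uniformly equicontinuous, and it is uniformly bounded by $\|U\|_{L^\infty}$. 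Ascoli-Arzel\`a together with dominated convergence in the integral equation yield, up to a subsequence, $U_n \to U_\infty$ locally uniformly, with $U_\infty$ continuous, bounded, satisfying $U_\infty = T(U_\infty)$, and $x_n - k_n \to x_\infty \in [0,1]^N$. Periodicity of $\phi_p$ turns $U(x_n)/\phi_p(x_n) \leq \mu$ into $U_\infty \leq \mu\phi_p$ everywhere with equality at $x_\infty$. Evaluating at this contact point,
\[
\mu\phi_p(x_\infty) = U_\infty(x_\infty) = T(U_\infty)(x_\infty) \leq T(\mu\phi_p)(x_\infty) < L(\mu\phi_p)(x_\infty) = \mu\l\phi_p(x_\infty) \leq \mu\phi_p(x_\infty),
\]
which is the desired contradiction. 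The strict inequality $T(\mu\phi_p)(x_\infty) < L(\mu\phi_p)(x_\infty)$ uses $g(z) < g'(0) z$ for $z > 0$ at $z = \mu\phi_p(y) > 0$ on the ball $\{|y - x_\infty| \leq r\}$ where $V(x_\infty,\cdot) > 0$. The main obstacle is precisely this $\l \leq 1$ case: because $U$ is not assumed periodic, the supremum $\mu$ need not be attained on $\R^N$, so the translation-compactness step producing $U_\infty$ is essential to create a genuine contact point on which a strong-maximum-principle-type argument can be run; by contrast the $\l > 1$ case is a classical monotone-iteration-plus-KPP-sliding argument, made quantitative by the uniform gap $\delta > 0$ that periodicity provides on the fundamental domain.
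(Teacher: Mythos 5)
Your proposal is correct. The existence step (monotone iteration from $\e\phi_p$, using $g(z)\geq g'(0)z-Cz^2$ and Ascoli--Arzel\`a) and the $\l\leq 1$ step (contact of $U$ with $\mu\phi_p$ after translating along a maximizing sequence, then strict concavity of $g$ at the contact point) are essentially the arguments of the paper. Where you genuinely diverge is the uniqueness step for $\l>1$. The paper compares an arbitrary solution $\t U$ from below by multiples of the periodic one, and when the extremal ratio is only approached along a diverging sequence it translates $\t U$ by $k_n\in\Z^N$, extracts a locally uniform limit via \eqref{hyp gamma 1} and Ascoli--Arzel\`a, and runs a rigidity argument ($g(\eta^\star U)\equiv \eta^\star g(U)$, impossible by strict concavity) at the resulting genuine contact point. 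You instead exploit the fact that the reference solution $U_p$ is periodic: when $\tau^\star>1$ (resp.\ $s^\star<1$) the defect $\tau^\star U_p - T(\tau^\star U_p)$ is continuous, periodic and strictly positive, hence bounded below by a uniform $\delta>0$ on the fundamental domain, and this quantitative gap lets you slide $\tau^\star$ down (resp.\ $s^\star$ up) without ever needing a contact point or a compactness extraction. Your route is more elementary and self-contained in that step; the paper's route reuses a single translation-compactness template that it also needs (unavoidably) in the $\l\leq 1$ case and in the proof of Theorem \ref{th threshold}, so nothing is saved globally, but both arguments are valid. One point worth making explicit in your write-up: the well-definedness and positivity of $s^\star$ rests on $\inf_{\R^N}U>0$, which is exactly where Lemma \ref{inf} (and hence Proposition \ref{approx}) enters, just as in the paper.
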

\begin{proof}
\emph{Case $\l >1$. Existence of a non-zero periodic solution.} \\
Let $\phi_p>0$ be a positive principal periodic eigenfunction associated to $\l$. For $\e >0$, we have, owing to the hypothesis \eqref{hyp g},
$$
T(\e \phi_p) \geq L(\e\phi_p) - CL(\e^2 \phi_p^2).
$$
Because
$$
L(\phi_p^2) \leq \| \phi_p\|_{L^\infty} L(\phi_p),
$$
we find that, up to taking $\e$ small enough, we have
$$
T(\e \phi_p) \geq \e(1 - C\e \| \phi_p\|_{L^{\infty}}) \l \phi_p \geq \e\phi_p.
$$
We now define a sequence of positive, continuous periodic functions $(U_n)_{n\in\N}$ by 
\begin{equation}\label{Un}
U_{n+1} = T(U_n), \quad U_0 = \e \phi_p.
\end{equation}
Because $U_1 = T(U_0) \geq U_0$ and because $T$ is order-preserving, it is readily seen that the sequence $(U_n)_{n\in \N}$ is non-decreasing. Moreover, it is bounded independently of $n\in \N$ by $\|g\|_{L^{\infty}}\sup_{x\in \R^N}\left(\int_{\R^N}V(x,y)dy\right)$, therefore it converges pointwise as $n$ goes to $+\infty$ to some periodic function $U$. In addition, because $ U \geq U_{0}$, the function $U$ is not everywhere equal to zero. The uniform boundedness of the sequence together with hypothesis \eqref{hyp gamma 1} yield that $(U_n)_{n\in\N}$ is locally equicontinuous. The Ascoli-Arzel\`a theorem then implies that the convergence of $U_n$ to $U$ is locally uniform. An easy computation yields that, for every $x\in \R^N$, $T(U_n)(x)$ converges to $T(U)(x)$. Taking the limit $n\to+\infty$ in \eqref{Un}, we find that $U$ is a periodic, positive, continuous solution of \eqref{eq stat}.

\medskip
\emph{Case $\l >1$. Uniqueness of the positive solution.}\\
Let $U$ be the positive continuous periodic solution of \eqref{eq stat} given by the first step. Let~$\t U$ be a bounded non-negative, non-zero solution, not necessarily periodic. Let us prove that $U \equiv \t U$. First, observe that $\t U$ is continuous owing to \eqref{hyp gamma 1}. Moreover, Lemma~\ref{inf} yields that the infimum of $\t U$ is positive. Therefore, we can define
$$
\eta^\star := \max\{ \eta >0 \ : \ \t U \geq \eta U \} >0.
$$
It is sufficient to prove that $\eta^\star \geq 1$. Indeed, this will imply that $\t U\geq U$, and inverting the roles of $\t U$ and $U$ will yield the equality between the two solutions. We argue by contradiction, we assume that $\eta^\star<1$.

By continuity, we have that $\eta^\star U \leq \t U$, and there is a sequence $(x_n)_{n\in\N}$ such that $\eta^\star U(x_n) - \t U(x_n) \to 0$ as $n\to +\infty$. Because the operator $T$ is order-preserving, we have
$$
T(\eta^\star U) \leq T(\t U) = \t U.
$$
Because we assume that $\eta^\star <1$, the hypothesis \eqref{hyp g concave} implies that $\eta^\star g(z) \leq g(\eta^\star z)$, for every $z >0$. Hence, $\eta^\star T(U) \leq T(\eta^\star U)$, and then
\begin{equation}\label{eg}
\forall x\in \R^N, \quad \eta^\star U(x) = \eta^\star T(U)(x) \leq T(\eta^\star U)(x) \leq T(\t U)(x) =\t U(x).
\end{equation}
We let $(k_n)_{n\in\N}$, $(z_n)_{n\in\N}$ be such that $x_n = k_n + z_n$, with $k_n \in \Z^N$ and $z_n \in [0,1)^N$. Up to extraction, we find $z\in [0,1]^N$ such that $z_n \to z$ as $n$ goes to $+\infty$. We define the sequence of translated functions
$$
\t U_n := \t U(\cdot+k_n).
$$
The periodicity hypothesis \eqref{hyp per} yields that $T(\t U_n) = \t U_n$. The sequence $(\t U_n)_{n\in \N}$ is bounded independently of $n$ (because $\t U$ is bounded). Therefore, owing to hypothesis \eqref{hyp gamma 1}, the sequence $(\t U_n)_{n\in \N}$ is equicontinuous, hence we can apply the Ascoli-Arzel\`a theorem to find that, up to extraction, $\t U_n$ converges locally uniformly as $n$ goes to~$+\infty$ to some $\t U_{\infty}$. Owing to Lemma \ref{inf}, $\t U_{\infty} \not\equiv 0$. Evaluating \eqref{eg} at $x + k_n$ and taking the limit $n\to +\infty$, we find that
$$
\forall x\in \R^N, \quad \eta^\star U(x) = \eta^\star T(U)(x) \leq T(\eta^\star U)(x) \leq T(\t U_\infty)(x) =\t U_\infty(x).
$$
Moreover, we have
$$
\eta^\star U(z) =\t U_\infty(z).
$$
Arguing as above, we find that this yields
$$
\forall x\in \R^N, \quad g(\eta^\star U(x))=\eta^\star g(U(x)).
$$
Owing to the hypothesis \eqref{hyp g concave}, this is impossible because $U\not\equiv0$ and $\eta^\star<1$. We have reached a contradiction, hence $\t U \equiv U$.

\medskip
\emph{Case $\l \leq 1$. Non-existence of positive solutions.}\\
Assume that $\l\leq1$ and that there is a bounded solution $U \geq 0$ of \eqref{eq stat}. Let $\phi_{p}$ be a principal periodic eigenfunction associated with $\l$. Because $U$ is bounded and because $\inf_{\R^N}\phi_p >0$, we can define
$$
M^{\star} := \inf\{ M \geq 0 \ : \ M \phi_p \geq U \}.
$$
Then, by continuity, $U \leq M^{\star}\phi_p$ and there is a sequence $(x_n)_{n\in\N}\in \R^N$ such that $M^\star \phi_p(x_n) - U(x_n)\to 0$ as $n$ goes to $+\infty$. Owing to \eqref{hyp g}, we have
\begin{equation}\label{eg no}
U = T(U) \leq T(M^\star \phi_p) \leq L(M^\star \phi_p) = \lambda_1 M^\star \phi_p \leq M^\star \phi_p.
\end{equation}
We define two sequences $(k_n)_{n\in\N}$ and $(z_n)_{n\in\N}$ as in the previous step, that is, $x_n = k_n + z_n$, with $k_n \in \Z^N$ and $z_n \in [0,1)^N$. We let $z\in [0,1]^N$ be a limit, up to extraction, of $z_n$ as $n$ goes to $+\infty$. Evaluating \eqref{eg no} at $x=x_n$ and taking the limit $n \to +\infty$, we find that, up to extraction,
$$
T(M^\star \phi_p)(z) = L(M^\star \phi)(z),
$$
which implies that (remember that we assume $g^\prime(0)=1)$
$$
M^\star\phi_p \equiv g(M^\star \phi_p).
$$
Owing to hypothesis \eqref{hyp g concave}, this is possible only if $M^\star =0$, that is, if $U\equiv 0$.
\end{proof}
We are now in position to prove Theorem \ref{th threshold}.
\begin{proof}[Proof of Theorem \ref{th threshold}]
Let $u$ be the solution of \eqref{eq}. Owing to Proposition \ref{prop cv}, it converges to $u_{\infty}$, solution of \eqref{eq limit}. Assume that $\l>1$, and let $U$ be the unique positive periodic solution of \eqref{eq stat} given by Proposition \ref{sol stat}. We take a diverging sequence $(x_n)_{n\in\N}\in\R^N$  such that
\begin{equation}\label{limit inf translations}
\vert u_{\infty}(x_n) - U(x_n)\vert \underset{n \to +\infty}{\longrightarrow} \lim_{R\to+\infty} \left(\sup_{\vert x \vert \geq R}\vert u_{\infty}(x) - U(x)\vert\right).
\end{equation}
We choose $(k_n)_{n\in\N}\in \Z^N$ and $(z_n)_{n\in\N} \in [0,1)^N$ such that $x_n = k_n + z_n$. Because $x_n$ diverges, so does $k_n$. Up to extraction, we assume that $z_n$ converges to some $z\in [0,1]^N$ as $n$ goes to $+\infty$.
We introduce the translated functions
$$
u_n := u_{\infty}(\cdot+k_n).
$$
Because $u_{\infty}$ is solution of \eqref{eq limit}, $u_n$ solves
\begin{equation}\label{eq limit n}
u_{n}(x) = \int_{\R^N}V(x,y)g(u_n(y))dy + f_{\infty}(x+k_n), \quad x\in \R^N.
\end{equation}
Observe that, because $f_\infty \geq 0$, we can apply Lemma \ref{inf} to get that there is $\kappa>0$ such that $u_n \geq \kappa$, for every $n\in \N$.

Because $f_\infty$ is bounded and uniformly continuous and owing to hypothesis \eqref{hyp gamma 1}, we find that the sequence $(u_n)_{n\in\N}$ is bounded and equicontinuous. Owing to the Ascoli-Arzel\`a theorem, we can extract a sequence that converges locally uniformly to some function $\t U$. We have $\t U \geq \kappa >0$, hence $\t U$ is not everywhere equal to zero. Moreover, because $\vert k_n\vert$ goes to $+\infty$ as $n$ goes to $+\infty$, $f_{\infty}(x+k_n)$ converges to $0$ locally uniformly as $n$ goes to $+\infty$, owing to hypothesis \eqref{hyp f 2}. Taking the limit $n\to+\infty$ in \eqref{eq limit n}, we find that $\t U$ is a bounded non-negative, non-zero solution of \eqref{eq stat}. Proposition~\ref{sol stat} then yields that $\t U \equiv U$, where $U$ is the unique positive periodic solution of \eqref{eq stat}.

Owing to \eqref{limit inf translations}, and using the fact that $U$ is periodic and that $u_n$ converges locally uniformly to $\t U$, we have
$$
\begin{array}{llc}
\lim_{R\to+\infty} \left( \sup_{\vert x \vert \geq R}\vert u_{\infty}(x) - U(x)\vert \right) &= \lim_{n\to+\infty}\vert u_{n}(z_n) - U(z_n)\vert \\
&= \vert \t U(z) - U(z)\vert\\
&=0.
\end{array}
$$
This proves the result when $\l>1$. When $\l \leq 1$, the proof is similar.
\end{proof}

We have now proven Theorem \ref{th threshold}: the principal periodic eigenvalue of $L$ characterizes the long-time behavior of solutions of \eqref{eq}. Our proof used the symmetry hypothesis \eqref{sym V}. As mentioned in Section \ref{sec res 1}, this hypothesis is somewhat necessary. Indeed, we have the following:
\begin{prop}\label{prop ce}
We consider the $1$-dimensional case. Define
\begin{equation}\label{ker}
\Gamma(t,x,y)= \frac{1}{\sqrt{4\pi t}}e^{-\frac{(x-y)^2}{4t}}e^{-(x-y)}e^{-\frac{3}{2}t} ,\quad t>0, \ x, y \in \R.
\end{equation}
We also fix $g(z) = 1 - e^{-z}$ and $f(t,x) =f_0(x) \in C^2(\R)$, where $f_0 \geq 0$, $f_0 \not\equiv 0$ is compactly supported.

Then, the solution $u$ of \eqref{eq} with $\Gamma,f,g$ as above does not propagate. However, the principal periodic eigenvalue $\lambda_1$ of $L$ defined by \eqref{L} is strictly greater than $1$ (it is equal to $2$).

In other terms, Theorem \ref{th threshold} does not hold for such kernels.
\end{prop}
Observe that the kernel $\Gamma$ in the theorem satisfies all the hypotheses needed for Theorem \eqref{th threshold}, except the symmetry one, \eqref{sym V}.
\begin{proof}
Let us start with proving that $u$, the solution of \eqref{eq} with such $\Gamma,f,g$ does not propagate. Observe that the kernel $\Gamma(t,x,y)$ is the fundamental solution of the operator
$$
\partial_t  - \partial_{xx}  - 2\partial_x  +\frac{1}{2}.
$$
Therefore, we have
$$
\partial_t(u-f_0)  - \partial_{xx}(u-f_0)  - 2\partial_x(u-f_0)  +\frac{1}{2}(u-f_0) = g(u),
$$
hence
\begin{equation}\label{eq rd u}
\partial_t u  = \partial_{xx} u + 2\partial_x u + (1 - e^{-u}) - \frac{1}{2}u + h,
\end{equation}
where $h = -\partial_{xx} f_0 - c\partial_x f_0  +\frac{1}{2}f_0$ is a continuous, compactly supported function.

Now, observe that, for $A>0$ large enough, the function
$$
s(x) = A e^{-x}
$$
is supersolution of \eqref{eq rd u}. Up to increasing $A$, we ensure also that $s\geq u(0,\cdot) = f_0$. The parabolic comparison principle implies that
$$
u(t,x) \leq s(x), \quad \forall t>0, \ x\in \R.
$$
Hence, we do not have propagation of the epidemic: $\lim_{x\to +\infty }\limsup_{t\to +\infty}u(t,x) =0 $.

On the other hand, the principal periodic eigenvalue of the operator $L$ is strictly greater than $1$. Indeed, in the case considered here,
$$
V(x,y) = \frac{1}{\sqrt{6}} e^{- \sqrt{\frac{3}{2}} \vert x - y\vert}e^{-(x-y)}.
$$
Observe that this is the Green function of the elliptic operator $- \partial_{xx}  - 2\partial_x  +\frac{1}{2}$. The function everywhere constant equal to $1$ is a principal periodic eigenfunction, and then we compute that $\lambda_1 =2$.This concludes the proof.
\end{proof}
Let us say a word about this result. It proves that hypothesis \eqref{sym V} can not be totally removed. Two questions then arise: what is the optimal condition on $\Gamma$ that could ensure that the principal periodic eigenvalue characterizes the propagation? For general $\Gamma$, can we find another criterion that ensures that we have propagation or fading out?

For the first question, observe that we used hypothesis \eqref{sym V} only one time, it was in the proof of Lemma \ref{lem vp}, and we used it only to say that the operator $L$ is conjugated to a symmetric operator acting on $C^0_{per}$. We leave it as an open question to find more general conditions.

Concerning the second question, finding a more general criterion, it is enlightning to observe that this phenomenon (the fact that the principal periodic eigenvalue does not characterizes the long-time behavior of the system when the problem is not symmetric) was already observed in the setting of reaction-diffusion equations, see \cite{BReigen} for instance. Our proof is an adaptation of this fact. However, when studying reaction-diffusion equations, a notion called \emph{generalized principal eigenvalue} has been introduced, by Berestycki, Nirenberg and Varadhan \cite{BNV} and later extended in \cite{BDR, BReigen}, and was used successfully to study non-symmetric reaction-diffusion equations. We leave it for later works to extend such a notion for integral operators.

\section{Traveling waves}\label{sec tw}

This section is dedicated the proofs of Theorems \ref{th waves} and \ref{th non}. We define the two following operators:
\begin{equation*}
\begin{cases}
\mathcal{T} u := \int_{\R^N}\int_{0}^{+\infty}  \Gamma(\tau,x,y)g(u(t-\tau,y)) d\tau dy, \\
\mc L u :=  \int_{\R^N}\int_{0}^{+\infty}  \Gamma(\tau,x,y)u(t-\tau,y) d\tau dy.
\end{cases}
\end{equation*}
We still assume, without loss of generality, that $g^\prime(0)=0$.
Owing to the hypothesis \eqref{hyp g}, the operator $\mc T$ is ``controlled'' by its linearization $\mc L$ in the sense that there is $C>0$ such that:
\begin{equation}\label{L KPP}
\mc Lu - C\mc L u^2 \leq \mc Tu \leq \mc Lu, \quad \text{ for all } \ u\geq0.
\end{equation}
With these notations, the equation \eqref{eq waves} for traveling waves rewrites $u = \mc T u$. We say that the function $u$ is a subsolution (resp. supersolution) of \eqref{eq waves} if is satisfies $u \leq \mc T u$ (resp. $u\geq \mc T u$).\\

We recall that, in this whole section, we assume that $\Gamma$ satisfies \eqref{1 hyp waves per}. This hypothesis may be actually relaxed, it is sufficient to assume that,
for every $\rho\geq 0$, for every $e\in \mathbb{S}^{N-1}$ and for every $c\geq 0$, the kernel
\begin{equation}\label{hyp waves per}
\Gamma_{\rho,c,e}(\tau,x,y) := \Gamma(\tau,x,y)e^{-\rho(c\tau + (y-x)\cdot e)}
\end{equation}
satisfies hypothesis \eqref{hyp gamma 1}, locally uniformly in $(\rho,c)$. It is easy to check that \eqref{hyp waves per} is a consequence of \eqref{1 hyp waves per}, together with the periodicity hypothesis \eqref{hyp per} and with \eqref{hyp gamma 1}. In the homogeneous framework, a similar hypothesis was assumed by Diekmann \cite{Di1}.\\

The strategy of proof we employ here is inspired by some techniques developped for the study of KPP reaction-diffusion equations, see \cite{NR} for instance.

To build traveling waves solutions to \eqref{eq waves}, we will use a supersolution-subsolution algorithm. A key-point is the following computation: let $\rho,c >0$ and $e\in \mathbb{S}^{N-1}$ be chosen. Then, for $\phi \in C^{0}_{per}(\R^N)$, we have
\begin{equation}\label{calc waves}
\begin{array}{rll}
\mc L(\phi(x) e^{-\rho( x\cdot e -c t)}) &=& \int_{0}^{+\infty} \int_{\R^N}\Gamma(\tau,x,y) e^{-\rho( y \cdot e - c(t-\tau))} \phi(y)  dy  d\tau \\
&=&\left(   \int_{\R^N}  \left(   \int_{0}^{+\infty}  \Gamma(\tau,x,y)e^{-\rho c \tau} d\tau \right) e^{-\rho(y- x) \cdot e}  \phi(y) dy \right) e^{-\rho( x\cdot e -c t)} \\
&=& \mc S_{\rho,c,e}(\phi)e^{-\rho( x\cdot e -c t)},
\end{array}
\end{equation}
where $\mc S_{\rho,c,e}$ is defined by \eqref{Slc}. For notational simplicity, we assume that the direction $e\in \mathbb{S}^{N-1}$ is fixed, and we omit it in the indices from now on. We recall that $\lambda_1(\rho,c)$ denotes the principal periodic eigenvalue of $\mc S_{\rho,c}$. We let $\phi_{\rho,c}$ be an associated positive principal periodic eigenfunction. It follows from the computation \eqref{calc waves} that
$$
\mc L(\phi_{\rho,c}(x) e^{-\rho( x\cdot e -c t)}) = \lambda_1(\rho,c)\phi_{\rho,c} e^{-\rho( x\cdot e -c t)}.
$$
Clearly, if $\l(\rho,c) \leq 1$, \eqref{L KPP} implies that $\phi_{\rho,c}(x)e^{-\rho(x\cdot e -ct)}$ is a supersolution of \eqref{eq waves}. We conclude these remarks with a technical result:
\begin{prop}\label{dec}
The function
$$
(\rho,c)\in  [0,+\infty)\times[0,+\infty) \mapsto \lambda_{1}(\rho,c) \in \R
$$
is continuous. Moreover, for $\rho >0$, the function
$$
c \in [0,+\infty) \mapsto \lambda_1(\rho,c) \in \R
$$
is strictly decreasing.
\end{prop}
\begin{proof}
The strict monotonicity of $c\mapsto \l(\rho,c)$ for $\rho>0$ can be proven exactly as in the proof of Proposition~\ref{cv vp}, Step~1, therefore we do not repeat it. The continuity follows from hypothesis \eqref{hyp waves per}. Indeed, take $(\rho,c)\in [0,+\infty)^2$ and two sequences $(\rho_n)_{n\in \N}, (c_n)_{n\in\N}$, where $\rho_n, c_n \geq 0$, such that $\rho_n \to \rho$, $c_n \to c$. Let $\phi_n$ denote the principal eigenfunction of $S_{\rho_n,c_n}$ normalized so that $\sup\phi_n = 1$. Letting $x_n, \t x _n$ be some points of $[0,1]^N$ where $\phi_n$ is respectively minimal and maximal, we see that
$$
\int_{\R^N}\int_{0}^{+\infty}\Gamma_{\rho_n,c_n}(\tau,x_n,y)d\tau dy \leq \l(\rho_n,c_n) \leq \int_{\R^N}\int_{0}^{+\infty}\Gamma_{\rho_n,c_n}(\tau,\t x_n,y)d\tau dy.
$$
We can assume that, up to extraction, the sequences $(x_n)_{n\in\N}, (\t x_n)_{n\in\N}$ converge to some $x_\infty, \t x_\infty \in [0,1]^N$. Owing to hypothesis \eqref{hyp waves per}, we have that $\int_{\R^N}\int_{0}^{+\infty}\Gamma_{\rho_n,c_n}(\tau,x_n,y)d\tau dy$ converges to $\int_{\R^N}\int_{0}^{+\infty}\Gamma_{\rho,c}(\tau,x_\infty,y)d\tau dy >0$ as $n$ goes to $+\infty$, and then we find that $(\lambda_1(\rho_n, c_n))_{n\in\N}$ is bounded from below by a positive constant. It is also bounded from above. Up to performing an extraction, we assume that it converges to some $\ol\lambda >0$. For every $n\in \N$, we have
$$
S_{\rho_n,c_n}(\phi_n) = \l(\rho_n,c_n)\phi_n.
$$
Owing to the hypothesis \eqref{hyp waves per} and to the normalization, we find that the sequence $(\phi_n)_{n\in \N}$ is equicontinuous. The Ascoli-Arzel\`a theorem gives us that  $\phi_n$ converges up to extraction  to some positive $\phi_{\infty} \in C^0_{per}(\R^N)$ that satisfies
$$
S_{\rho,c}(\phi_\infty) = \ol \lambda \phi_{\infty}.
$$
The uniqueness of the principal periodic eigenvalue implies that $\ol \lambda = \l(\rho,c)$, hence the result.
\end{proof}
%

\subsection{Existence of traveling waves.}\label{sec ex}

The next result gives the existence of supersolutions and subsolutions to equation~\eqref{eq waves}.

\begin{prop}\label{prop sol}
Assume that $\lambda_1 >1$. Let $U$ denote the positive periodic solution of \eqref{eq stat} provided by Proposition \ref{sol stat}. For $c>c^{\star}(e)$, there are $0 \leq \ul u \leq \ol u$, subsolution and supersolution respectively to \eqref{eq waves}, such that
\begin{equation}\label{supersolution}
\sup_{x\cdot e - ct \geq \delta}\ol u(t,x) \underset{\delta \to + \infty}{\longrightarrow} 0\quad \text{ and }\quad \sup_{x\cdot e - ct \leq \delta}\vert \ol u(t,x) - U(x)\vert\underset{\delta\to - \infty}{\longrightarrow} 0
\end{equation}
and such that there are $\alpha,\beta \in \R$, $\alpha<\beta$ such that
\begin{equation}\label{subsolution}
\inf_{t \in \R}\left(\inf_{\alpha\leq x\cdot e -ct \leq \beta}\ul u(t,x)\right) >0.
\end{equation}
\end{prop}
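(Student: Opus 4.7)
Write $s := x \cdot e - ct$. The plan follows the classical KPP-type recipe: $\ol u$ will be the minimum of an exponentially decaying ``front'' and the periodic stationary state $U$, while $\ul u$ will be built as the positive part of a difference of two such fronts.

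\textbf{Choice of exponents.} Since $\lambda_1(0, c) = \lambda_1 > 1$ (the operator $\mc S_{0, c}$ coincides with $L$ from \eqref{L}) and $c > c^\star(e)$ gives some $\rho \in (0, \rho_0)$ with $\lambda_1(\rho, c) \leq 1$, the continuity of $\rho \mapsto \lambda_1(\rho, c)$ (argued as in Proposition \ref{dec}) and the intermediate value theorem produce $\rho_1 \in (0, \rho_0)$ with $\lambda_1(\rho_1, c) = 1$. By a further continuity argument (and possibly shrinking $\rho_1$ using Proposition \ref{dec}), I select $\rho_2 \in (\rho_1, \min(2\rho_1, \rho_0))$ with $\lambda_1(\rho_2, c) < 1$. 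Let $\phi_i := \phi_{\rho_i, c}$ be the associated positive periodic eigenfunctions and $w_i(t, x) := \phi_i(x)\, e^{-\rho_i s}$; by \eqref{calc waves}, $\mc L(w_i) = \lambda_1(\rho_i, c)\, w_i$.

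\textbf{Supersolution.} Set $\ol u(t, x) := \min(U(x), A\, w_1(t, x))$ for some $A > 0$. Since $\lambda_1(\rho_1, c) = 1$ and $\mc T \leq \mc L$ by \eqref{L KPP}, $\mc T(A w_1) \leq \mc L(A w_1) = A w_1$; by Proposition \ref{sol stat}, $U$ satisfies $\mc T(U) = U$. Monotonicity of $\mc T$ then yields $\mc T(\ol u) \leq \min(\mc T(U), \mc T(A w_1)) \leq \ol u$. The required limits follow at once: $\ol u \leq A\|\phi_1\|_\infty e^{-\rho_1 s} \to 0$ as $s \to +\infty$, while $A w_1 \geq A(\inf \phi_1) e^{-\rho_1 s} \to +\infty$ uniformly in $x$ as $s \to -\infty$, forcing $\ol u \equiv U$ on $\{s \leq \delta\}$ for $\delta$ sufficiently negative.

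\textbf{Subsolution and ordering.} Set $\ul u(t, x) := \kappa\, [w_1 - M w_2]_+$ with $\kappa > 0$ small and $M > 0$ large. Using $\ul u \geq \kappa(w_1 - M w_2)$, $\ul u^2 \leq \kappa^2 w_1^2$, and \eqref{L KPP},
\[
\mc T(\ul u) \geq \kappa\bigl(w_1 - M \lambda_1(\rho_2, c)\, w_2\bigr) - C \kappa^2 \mc L(w_1^2).
\]
Adapting \eqref{calc waves} to $w_1^2$ (permitted since $2\rho_1 < \rho_0$) gives $\mc L(w_1^2)(t, x) = \mc S_{2\rho_1, c}(\phi_1^2)(x)\, e^{-2\rho_1 s} \leq C_1 e^{-2\rho_1 s}$ for some $C_1 > 0$. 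The subsolution inequality on $\{\ul u > 0\} = \{s > s^\star(x)\}$, with $s^\star(x) := (\rho_2 - \rho_1)^{-1} \log(M \phi_2(x)/\phi_1(x))$ uniformly bounded in $x$, then reduces to $M(1 - \lambda_1(\rho_2, c)) \phi_2(x) \geq C C_1 \kappa\, e^{(\rho_2 - 2\rho_1)s}$; since $\rho_2 - 2\rho_1 < 0$, the right-hand side is uniformly bounded on $\{\ul u > 0\}$, and choosing $M$ large then $\kappa$ small makes the inequality hold. Shrinking $\kappa$ further so that $\kappa \|\phi_1\|_\infty e^{-\rho_1 \inf_x s^\star(x)} \leq \inf U$ and taking $A \geq \kappa$ yields $\ul u \leq \ol u$. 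Finally, on any band $\{\alpha \leq s \leq \beta\}$ with $\alpha > \sup_x s^\star(x)$, the function $\ul u(t, x) = \kappa(\phi_1(x) e^{-\rho_1 s} - M \phi_2(x) e^{-\rho_2 s})$ is continuous, periodic in $x$, and strictly positive, hence uniformly bounded below.

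\textbf{Main obstacle.} The delicate point is the simultaneous selection of $\rho_1 < \rho_2 < \min(2 \rho_1, \rho_0)$ with $\lambda_1(\rho_2, c) < 1$. Existence of $\rho_1$ is a direct IVT argument, but pushing $\rho_2$ into $(\rho_1, 2 \rho_1)$ requires the eigenvalue curve $\rho \mapsto \lambda_1(\rho, c)$ to dip strictly below $1$ just past $\rho_1$, and $2 \rho_1 < \rho_0$ may be binding in borderline regimes. If the latter fails, the bound on $\mc L(\ul u^2)$ must be handled differently (e.g.\ via $\mc L(\ul u^2) \leq \|\ul u\|_\infty \mc L(\ul u)$) to avoid invoking hypothesis \eqref{hyp waves per} at exponent $2 \rho_1$.
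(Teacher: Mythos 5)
Your construction is essentially the paper's: the supersolution is $\min\{U, A\phi_{\rho_1,c}e^{-\rho_1 s}\}$ with $\lambda_1(\rho_1,c)\le 1$, and the subsolution is the positive part of $\phi_{\rho_1,c}e^{-\rho_1 s}-M\phi_{\rho_2,c}e^{-\rho_2 s}$ with $\rho_1<\rho_2<2\rho_1$, $\lambda_1(\rho_1,c)\ge 1>\lambda_1(\rho_2,c)$, the quadratic error being absorbed into the $\rho_2$-term by enlarging $M$. Both of the obstacles you flag are resolvable, and the second is where your variant is slightly less robust than the paper's. For the exponent selection: since $c>c^{\star}(e)$, strict monotonicity of $c\mapsto\lambda_1(\rho,c)$ (Proposition \ref{dec}) gives some $\rho^{*}\in(0,\rho_0)$ with $\lambda_1(\rho^{*},c)<1$ \emph{strictly}; taking $\rho_1:=\sup\{\rho\in[0,\rho^{*}]:\lambda_1(\rho,c)\ge 1\}>0$, one has $\lambda_1(\rho_1,c)=1$ and $\lambda_1(\cdot,c)<1$ on all of $(\rho_1,\rho^{*}]$, so any $\rho_2\in(\rho_1,\min(2\rho_1,\rho^{*}))$ works — no danger of the curve failing to dip below $1$ just past $\rho_1$. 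For the quadratic term: the paper never evaluates $\mc L$ at the exponent $2\rho_1$. Instead it uses the pointwise bound, valid on the set $\{s\ge s_0\}$ containing the support of $\ul u$, $\ul u^{2}\le \kappa^{2}\phi_1^{2}e^{-2\rho_1 s}\le \kappa^{2}\bigl\|\phi_1^{2}/\phi_2\bigr\|_{L^{\infty}}e^{-(2\rho_1-\rho_2)s_0}\,\phi_2 e^{-\rho_2 s}$, and then applies $\mc L$ to $\phi_2 e^{-\rho_2 s}$, which only invokes \eqref{hyp waves per} at the exponent $\rho_2<\rho_0$. This removes the hypothesis $2\rho_1<\rho_0$ entirely (your fallback $\mc L(\ul u^{2})\le\|\ul u\|_{\infty}\mc L(\ul u)$ would only give $\mc T(\ul u)\ge(1-C\kappa\dots)\ul u$, which does not close); with that substitution your argument coincides with the paper's.
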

\begin{proof}
\emph{Step 1. Construction of the supersolution $\ol u$.}\\
Take $c>c^\star(e)$. Owing to Proposition \ref{dec}, we can find $\rho>0$ so that $\lambda_1(\rho,c) \leq 1$. Define
$$
w(t,x) := \phi_{\rho,c}(x)e^{-\rho(x\cdot e -ct)}
$$
and
$$
\ol u(t,x) := \min\{w(t,x), U(x)\}.
$$
It follows from \eqref{L KPP} that
$$
\mc T(w) \leq\mc L(w) = \lambda_1(\rho,c) w \leq  w.
$$
Then
$$
\mc T(\ol u) \leq \min\{ \mc T(w) , \mc T(U) \} \leq \ol u,
$$
hence $\ol u$ is a supersolution of \eqref{eq waves}, and it is readily seen that it satisfies \eqref{supersolution}.

\medskip
\emph{Step 2. Construction of the subsolution $\ul u$.}\\
Take $c>c^{\star}(e)$. Because $\lambda_1(0,c) = \l >1$ and because $\rho \mapsto \lambda_1(\rho,c)$ is continuous, owing to Proposition \ref{dec}, we find $\rho, \rho'$ such that 
$$
0<\rho < \rho' <2\rho
$$
and
$$
\lambda_1(\rho,c) \geq 1 \ \text{ and }\ \lambda_1(\rho',c)<1.
$$
We define
$$
v(t,x) := \phi_{\rho,c}(x)e^{-\rho(x\cdot e -ct)} - M\phi_{\rho',c}(x)e^{-\rho'(x\cdot e -ct)},
$$
where $M$ is large enough so that
$$
v(t,x) \leq 0 \quad \text{ when }\quad x\cdot e -ct \leq 0.
$$
Observe that
$$
v(t,x) \underset{x\cdot e -ct \to +\infty}{\longrightarrow} 0\quad \text{ and } \quad v(t,x) \underset{x\cdot e -ct \to -\infty}{\longrightarrow} -\infty.
$$
For $x \cdot e -ct \geq 0$, we have
$$
v^2(t,x) \leq  \phi_{\rho,c}^2(x) e^{-2\rho(x\cdot e -ct)} \leq \left\|\frac{\phi_{\rho,c}^2}{\phi_{\rho',c}}\right\|_{L^\infty} \phi_{\rho',c}(x)e^{-\rho'(x\cdot e -ct)}.
$$
We define
$$
\ul u := \max\{ v , 0 \}.
$$
Then
$$
\begin{array}{rl}
\mc T(\ul u) &\geq \mc L(\ul u) -C\mc L(\ul u^2)\\
&\geq \mc L(v) - C\mc L(v^2) \\
&\geq \mc L(v)- C\left\|\frac{\phi_{\rho,c}^2}{\phi_{\rho',c}}\right\|_{L^\infty}\mc L(\phi_{\rho',c}(x)e^{-\rho'(x\cdot e -ct)}) \\
&\geq \mc L(v) - C\left\|\frac{\phi_{\rho,c}^2}{\phi_{\rho',c}}\right\|_{L^\infty}\lambda_1(\rho',c)\phi_{\rho',c}(x)e^{-\rho'(x\cdot e -ct)}.
\end{array}
$$
Because
$$
\mc L(v) = \lambda_1(\rho,c)\phi_{\rho,c}(x)e^{-\rho(x\cdot e -ct)} - M\lambda_1(\rho',c)\phi_{\rho',c}(x)e^{-\rho'(x\cdot e -ct)},
$$
we finally get
\begin{multline*}
\mc T(\ul u) \geq \lambda_1(\rho,c)\phi_{\rho,c}(x)e^{-\rho(x\cdot e -ct)} \\ - \left( M \lambda_1(\rho',c) + C\left\|\frac{\phi_{\rho,c}^2}{\phi_{\rho',c}}\right\|_{L^\infty}\lambda_1(\rho',c)\right)\phi_{\rho',c}(x)e^{-\rho'(x\cdot e -ct)}.
\end{multline*}
Owing to our choice of $\rho, \rho'$, we can increase $M$ if needed to ensure that
\begin{equation*}
M \lambda_1(\rho',c) + C\left\|\frac{\phi_{\rho,c}^2}{\phi_{\rho',c}}\right\|_{L^\infty}\lambda_1(\rho',c)\leq M,
\end{equation*}
which yields
$$
\mc T(\ul u) \geq v.
$$
Because $\ul u \geq 0$, we have $\mc T(\ul u) \geq 0$, and then
$$
\mc T(\ul u)\geq \ul u,
$$
that is, $\ul u$ is a subsolution of \eqref{eq waves}. By construction, it satisfies \eqref{subsolution}. Moreover, up to increasing $M$, we can ensure that $\ul u \leq \ol u$.
\end{proof}
We will need the following technical lemma in the sequel.
\begin{lemma}\label{lemma T}
For every $\e>0$, there is $\delta>0$ such that, for every $u\in C^{0}(\R^{N+1})$, for every $t_1,t_2 \in \R$ and $x_1, x_2 \in \R^N$ such that $\vert t_1 -t_2 \vert + \vert x_1 -x_2 \vert \leq \delta$, we have
$$
\vert\mc T(u)(t_1,x_1) - \mc T(u)(t_2,x_2) \vert \leq \e.
$$
\end{lemma}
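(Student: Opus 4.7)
The plan is to use that $g$ is bounded (so $|g(u)| \leq \|g\|_{L^\infty}$ uniformly in $u$), together with the spatial equicontinuity \eqref{hyp gamma 1} of $\Gamma$ in $L^1$ and an additional continuity-of-translation argument in the $\tau$ variable. The statement being uniform in $u$ is only possible because $g$ is bounded, which kills any dependence on $u$ once we pass absolute values under the integral.

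I would split
\[
\mc T u(t_1,x_1) - \mc T u(t_2,x_2) = \bigl[\mc T u(t_1,x_1) - \mc T u(t_1,x_2)\bigr] + \bigl[\mc T u(t_1,x_2) - \mc T u(t_2,x_2)\bigr].
\]
The spatial term is immediate: pulling the bound $|g(u)| \leq \|g\|_{L^\infty}$ inside, it is dominated by $\|g\|_{L^\infty}\int_0^\infty\!\!\int_{\R^N}|\Gamma(\tau,x_1,y)-\Gamma(\tau,x_2,y)|\,dy\,d\tau$, which is less than $\e/2$ as soon as $|x_1-x_2|\leq\delta_1$ by \eqref{hyp gamma 1}.

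For the temporal term, assume WLOG $t_1\leq t_2$ and set $h:=t_2-t_1$. Substituting $\tau\mapsto\tau+h$ in the integral for $\mc T u(t_2,x_2)$ (and splitting off the resulting piece on $[-h,0]$, which I rewrite as an integral on $[0,h]$ via $\tau''=\tau'+h$) gives
\[
\mc T u(t_1,x_2)-\mc T u(t_2,x_2) = \!\int_0^{\!\infty}\!\!\int_{\R^N}\!\bigl[\Gamma(\tau,x_2,y)-\Gamma(\tau{+}h,x_2,y)\bigr]g(u(t_1{-}\tau,y))\,dy\,d\tau - \!\int_0^h\!\!\int_{\R^N}\!\Gamma(\tau,x_2,y)g(u(t_1{+}h{-}\tau,y))\,dy\,d\tau.
\]
Bounding $|g|\leq\|g\|_{L^\infty}$, the second piece is at most $\|g\|_{L^\infty}\int_0^h\!\int \Gamma(\tau,x_2,y)\,dy\,d\tau$, which tends to $0$ as $h\to 0$ by dominated convergence against the integrable function $\Gamma(\cdot,x_2,\cdot)$. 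The first piece is controlled by the $L^1$-translation modulus of $\Gamma(\cdot,x_2,\cdot)$.

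The main obstacle is getting \emph{uniformity in $x_2$} of these two temporal estimates; this is where the periodicity hypothesis \eqref{hyp per} is essential. By \eqref{hyp per} we may assume $x_2\in[0,1]^N$ (shifting $x_2$ by $k\in\Z^N$ and simultaneously $y$ by $k$ leaves both integrals invariant). Then by \eqref{hyp gamma 1}, the map $x\in[0,1]^N\mapsto \Gamma(\cdot,x,\cdot)\in L^1((0,\infty)\times\R^N)$ is continuous, hence its image is compact in $L^1$. Compact subsets of $L^1$ are equicontinuous under translation (Kolmogorov--Riesz compactness), which yields a modulus of continuity for $h\mapsto\|\Gamma(\cdot+h,x_2,\cdot)-\Gamma(\cdot,x_2,\cdot)\|_{L^1}$ that is uniform in $x_2\in[0,1]^N$; similarly, $h\mapsto\int_0^h\!\int \Gamma(\tau,x_2,y)\,dy\,d\tau$ is uniformly small in $x_2$. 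Hence both temporal pieces are less than $\e/4$ once $h\leq\delta_2$ uniformly in $x_2$. Taking $\delta:=\min(\delta_1,\delta_2)$ concludes the proof.
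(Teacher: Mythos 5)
Your proof is correct and follows essentially the same route as the paper: the same spatial/temporal splitting, the bound $|g|\leq\|g\|_{L^\infty}$ to remove all dependence on $u$, hypothesis \eqref{hyp gamma 1} for the spatial increment, and periodicity plus compactness of $[0,1]^N$ to get the temporal modulus uniform in $x$. The only (cosmetic) difference is that you package the uniformity step via the Kolmogorov--Riesz characterization of compact subsets of $L^1$, whereas the paper runs the equivalent argument by contradiction, extracting a convergent subsequence $\hat x_n\to\hat x$ in the unit cell and using the triangle inequality together with the $L^1$-continuity of translations of $\Gamma(\cdot,\hat x,\cdot)$.
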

This lemma implies that the image of $C^0(\R^{N+1})$ by $\mc T$ is a set of equicontinuous functions.
\begin{proof} \emph{Step 1. Uniform continuity with respect to $t$.}
Let $u\in C^{0}(\R^{N+1})$ and define $v := \mc T u$. Let us prove that:
\begin{equation}\label{unif c}
\forall \e >0, \ \exists \delta>0 \ \text{ such that } \ \vert t_2 -t_1  \vert \leq \delta \implies \sup_{x\in \R^N}\vert v(t_2,x) - v(t_1,x) \vert \leq \e.
\end{equation}
We argue by contradiction. We assume that there are $\e>0$ and three sequences $(t_1^n)_{n\in\N},(t_2^n)_{n\in\N}, (x_n)_{n\in \N}$, $t_1^n, t_2^n \in \R$, $x_n\in \R^N$, such that, for every $n\in \N$, $t_1^n<t_2^n$, $\vert t_1^n - t_2^n \vert \leq \frac{1}{n}$ and
$$
\vert v(t_2^n,x_n) - v(t_1^n,x_n)\vert \geq \e.
$$
First, up to a change of variable, we have
$$
v(t,x) = \int_{-\infty}^{t}\int_{\R^N} \Gamma(t-\tau,x,y)g(u(\tau,y))dy d\tau.
$$
Hence, for every $t_1<t_2$ and $x\in \R^N$, we have
\begin{multline*}
\vert v(t_2,x) - v(t_1,x)\vert \leq \int_{-\infty}^{t_1}\int_{\R^N} \vert \Gamma(t_2-\tau,x,y) - \Gamma(t_1 -\tau,x,y)\vert g(u(\tau,y))dy d\tau \\ + \int_{t_1}^{t_2}\int_{\R^N} \Gamma(t_2-\tau,x,y)g(u(\tau,y))dy d\tau.
\end{multline*}
Owing to the boundedness of $g$, we find that, up to another change of variable,
\begin{multline*}
\vert v(t_2,x) - v(t_1,x)\vert \leq \|g\|_{L^{\infty}} \int_{0}^{+\infty}\int_{\R^N} \vert \Gamma(t_2-t_1+\tau,x,y) - \Gamma(\tau,x,y)\vert dy d\tau \\ + \|g\|_{L^{\infty}}  \int_{0}^{t_2-t_1}\int_{\R^N} \Gamma(\tau,x,y)dy d\tau.
\end{multline*}
Let us define $\eta_n := t_2^n - t_1^n$. From the above computations, it follows that
\begin{multline*}
\e \leq \|g\|_{L^{\infty}} \int_{0}^{+\infty}\int_{\R^N} \vert \Gamma(\tau+\eta_n,x_n,y) - \Gamma(\tau,x_n,y)\vert dy d\tau \\ + \|g\|_{L^{\infty}}  \int_{0}^{\eta_n}\int_{\R^N} \Gamma(\tau,x_n,y)dy d\tau.
\end{multline*}
Owing to the periodicity hypothesis \eqref{hyp per}, we have
\begin{multline}\label{abs c}
\e \leq \|g\|_{L^{\infty}} \int_{0}^{+\infty}\int_{\R^N} \vert \Gamma(\tau+\eta_n,\hat x_n,y) - \Gamma(\tau,\hat x_n,y)\vert dy d\tau \\ + \|g\|_{L^{\infty}}  \int_{0}^{\eta_n}\int_{\R^N} \Gamma(\tau,\hat x_n,y)dy d\tau,
\end{multline}
where $\hat x_n \in [0,1)^N$ is such that $x_n - \hat x_n \in \Z^N$. By compactness, we assume that $\hat x_n$ converges to some $\hat x~\in~[0,1]^N$ as $n$ goes to $+\infty$. Observe that
\begin{multline*}
 \int_{0}^{+\infty}\int_{\R^N} \vert \Gamma(\tau+\eta_n,\hat x_n,y) - \Gamma(\tau,\hat x_n,y)\vert dy d\tau \leq \\
 2 \int_{0}^{+\infty}\int_{\R^N} \vert \Gamma(\tau,\hat x_n,y) - \Gamma(\tau,\hat x,y)\vert dy d\tau +  \int_{0}^{+\infty}\int_{\R^N} \vert \Gamma(\tau+\eta_n,\hat x,y) - \Gamma(\tau,\hat x,y)\vert dy d\tau.
\end{multline*}
The first term on the right-hand side goes to zero as $n$ goes to $+\infty$, because $\hat x_n$ goes to $\hat x$ and thanks to \eqref{hyp gamma 1}. The second term goes to zero as $n$ goes to $+\infty$ because $\Gamma(\cdot, \hat x, \cdot)$ is in $L^1$.

A similar argument shows that $\int_{0}^{\eta_n}\int_{\R^N} \Gamma(\tau,\hat x_n,y)dy d\tau$ goes to zero as $n$ goes to $+\infty$. This contradicts \eqref{abs c}. Hence \eqref{unif c} holds true.

\medskip
\emph{Step 2. Uniform continuity.}
Take $t_1, t_2 \in \R$ and $x_1, x_2 \in \R^N$. We have
\begin{multline*}
\vert v(t_1,x_1) - v(t_2,x_2) \vert\leq \sup_{x\in \R^N}\vert v(t_1,x) - v(t_2,x) \vert + \vert v(t_2,x_1) - v(t_2,x_2)\vert \\
  \leq \sup_{x\in \R^N}\vert v(t_1,x) - v(t_2,x) \vert + \| g\|_{L^{\infty}} \int_{0}^{+\infty}\int_{\R^N}\vert \Gamma(\tau,x_1,y) -\Gamma(\tau,x_2,y)\vert dy d\tau.
\end{multline*}
Therefore, owing to the first step and to hypothesis \eqref{hyp gamma 1}, the result follows.
\end{proof}
We are now in position to construct traveling waves solutions to \eqref{eq waves}.


\begin{prop}
Assume that $\lambda_1>1$ and let $U$ denote the unique positive periodic solution of \eqref{eq stat} given by Proposition \ref{sol stat}. For every direction $e\in \mathbb{S}^{N-1}$ and for every speed $c>c^{\star}(e)$, there is a traveling wave solution to \eqref{eq waves} connecting $0$ to $U$.
\end{prop}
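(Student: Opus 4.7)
The plan is to obtain the traveling wave as the decreasing limit of iterates $u_n := \mc T^n \ol u$ starting from the supersolution $\ol u$ of Proposition \ref{prop sol}, and then to verify the two boundary conditions of Definition \ref{def tw}. Since $\mc T$ is order-preserving (as $g$ is non-decreasing and $\Gamma\geq 0$) and $\mc T \ol u \leq \ol u$, the sequence $(u_n)$ is non-increasing; an easy induction using $\ul u \leq \ol u$ and $\mc T \ul u \geq \ul u$ shows $u_n \geq \ul u$ for every $n$. The pointwise limit $u^*$ therefore exists; the equicontinuity from Lemma \ref{lemma T} makes the convergence locally uniform and $u^*$ continuous, and passing to the limit in $u_n = \mc T u_{n-1}$ yields $u^* = \mc T u^*$. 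Crucially, $\ol u = \min(w, U)$ is non-decreasing in $t$ and $\mc T$ preserves $t$-monotonicity (since $g, \Gamma \geq 0$), so $u^*$ is $t$-non-decreasing.

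The decay $\sup_{x\cdot e - ct \geq \delta} u^*(t,x) \to 0$ as $\delta \to +\infty$ is immediate from $u^* \leq \ol u$ and \eqref{supersolution}. For the long-time limit, fix $x$ and pick any $t_x \in [(x\cdot e -\beta)/c, (x\cdot e -\alpha)/c]$, so that $(t_x, x)$ lies inside the positivity strip of $\ul u$; $t$-monotonicity gives $u^*(t,x) \geq u^*(t_x, x) \geq \ul u(t_x, x) \geq \kappa$ for all $t \geq t_x$, where $\kappa$ is the infimum in \eqref{subsolution}. Hence $u^*_\infty(x) := \lim_{t \to +\infty} u^*(t,x) \geq \kappa > 0$; a monotone convergence argument in $u^* = \mc T u^*$ shows that $u^*_\infty$ solves \eqref{eq stat}, and Proposition \ref{sol stat} forces $u^*_\infty \equiv U$.

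The main obstacle is the behaviour as $x\cdot e - ct \to -\infty$, which I would handle by a translation-compactness scheme. Take a sequence $(t_n,x_n)$ with $s_n := x_n\cdot e - c t_n \to -\infty$, decompose $x_n = k_n + \hat x_n$ with $k_n\in\Z^N$ and $\hat x_n\in[0,1)^N$ (so that $k_n\cdot e - ct_n \to -\infty$), and set $v_n(t,x) := u^*(t+t_n, x+k_n)$. By \eqref{hyp per} and the $t$-invariance of \eqref{eq waves}, each $v_n$ is a $t$-non-decreasing bounded solution of \eqref{eq waves}; Ascoli--Arzel\`a combined with Lemma \ref{lemma T} produces a subsequential locally uniform limit $v_\infty$, still a $t$-non-decreasing solution. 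The upper bound is easy: $w(t+t_n,x+k_n) \to +\infty$ locally uniformly (because $k_n\cdot e - ct_n \to -\infty$), so $\ol u(t+t_n,x+k_n) \to U(x)$, whence $v_\infty \leq U$.

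The subtle point is the matching lower bound $v_\infty \geq U$, since the translate of $\ul u$ drifts off to infinity and the subsolution alone does not suffice. The trick is to couple $t$-monotonicity of $u^*$ with the subsolution: for any fixed $(t_0, x_0)$, for $n$ large enough one can choose $t^*_n \leq t_0+t_n$ with $(t^*_n, x_0+k_n)$ in the positivity strip of $\ul u$, which gives $v_n(t_0, x_0) = u^*(t_0+t_n, x_0+k_n) \geq u^*(t^*_n, x_0+k_n) \geq \kappa$. Passing to the limit, $v_\infty \geq \kappa$ on all of $\R\times\R^N$. Then the monotone limit $V_{-\infty}(x) := \lim_{t\to-\infty} v_\infty(t,x) \geq \kappa > 0$ solves \eqref{eq stat} by monotone convergence, so Proposition \ref{sol stat} forces $V_{-\infty} \equiv U$; by $t$-monotonicity $v_\infty \geq V_{-\infty} = U$, and combined with $v_\infty \leq U$ this gives $v_\infty \equiv U$. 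Evaluating at $(0,\hat x_n)$, $u^*(t_n, x_n) = v_n(0,\hat x_n) \to v_\infty(0,\hat x) = U(\hat x)$, and by periodicity $U(\hat x) = \lim U(x_n)$. Since the sequence was arbitrary and $u^* \leq U$, we conclude $\sup_{x\cdot e - ct \leq \delta}|u^*(t,x)-U(x)| \to 0$ as $\delta\to-\infty$, which completes the proof.
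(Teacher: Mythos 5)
Your proof is correct and follows the same overall strategy as the paper: monotone iteration $u_{n+1}=\mc T u_n$ from the supersolution $\ol u$ of Proposition \ref{prop sol}, locally uniform convergence via Lemma \ref{lemma T} and Ascoli--Arzel\`a, the trivial decay at $x\cdot e-ct\to+\infty$ from $u^*\leq w$, and a translation-compactness argument combined with Proposition \ref{sol stat} to identify the limit behind the front. The one place where you genuinely diverge is the identification $v_\infty\equiv U$: the paper chooses $(t_n,x_n)$ to realize the limsup of $|u^*-U|$, exploits the resulting contact point $z$ together with time-monotonicity to reduce the entire solution $v_\infty$ to a solution of the stationary equation \eqref{eq stat} at $t=0$, and then rules out the zero solution with the subsolution bound $\eta\leq v_\infty$. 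You instead take the monotone limit $V_{-\infty}(x)=\lim_{t\to-\infty}v_\infty(t,x)$, show by dominated convergence that it solves \eqref{eq stat}, use the lower bound $\kappa$ (obtained by coupling $t$-monotonicity with the positivity strip of $\ul u$, as in the paper) to force $V_{-\infty}\equiv U$, and sandwich $v_\infty$ between $U$ and $U$ via $t$-monotonicity and the upper bound $v_\infty\leq U$. Your variant has the small advantages of working for an arbitrary sequence with $x_n\cdot e-ct_n\to-\infty$ (no extremal choice needed) and of bypassing the paper's somewhat delicate passage from a single contact point to the identity $v_\infty(0,\cdot)=Tv_\infty(0,\cdot)$; both arguments rest on the same two pillars, namely time-monotonicity of the iterates and the rigidity of Proposition \ref{sol stat}.
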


\begin{proof}
\emph{Step 1. Construction of a solution.}\\
Let $e\in\mathbb{S}^{N-1}$ and $c>c^{\star}(e)$. Let $\ul u, \ol u$ be given by Proposition \ref{prop sol}.
We define a sequence of functions $(u_n)_{n\in\N}$ by
$$
u_0 = \ol u\quad \text{ and } \quad u_{n+1} = \mc T u_n \quad \text{ for }\ n\geq 0.
$$
Because $\ol u$ is a supersolution of \eqref{eq waves} and because $\mc T$ is order-preserving, it is readily seen that the sequence of functions $(u_n)_{n\in\N}$ is decreasing. We define its pointwise limit
$$
v(t,x) := \lim_{n\to+\infty} u_n(t,x).
$$
Because $\ul u$ is subsolution of \eqref{eq waves} and because $\ul u \leq \ol u$, we have that $\ul u\leq u_n$ for every $n \geq 1$, and then
\begin{equation}\label{u v}
\ul u \leq v \leq \ol u.
\end{equation}
Owing to Lemma \ref{lemma T}, and using the Ascoli-Arzel\`a theorem, we find that the convergence of $u_n$ to $v$ is locally uniform in $(t,x) \in \R^{N+1}$ as $n$ goes to $+\infty$. This implies that, for $(t,x) \in \R^{N+1}$, $\mc T u_n (t,x) \to \mc T v(t,x)$ as $n$ goes to $+\infty$.
%
%
%

\medskip
\emph{Step 2. Proving that $v$ is a wave.}\\
Because of \eqref{u v}, we have that

$$
\sup_{\substack{x\cdot e - ct \geq \delta}} v(t,x)\leq \| \phi_{\rho,c}\|_{L^{\infty}}e^{-\rho \delta} \underset{\delta \to +\infty}{\longrightarrow} 0.
$$
It remains to prove that
$$
\sup_{\substack{ x\cdot e - ct \leq \delta}} \vert v(t,x) - U(x) \vert \underset{\delta \to -\infty}{\longrightarrow} 0.
$$
We consider two sequences $t_n$ and $x_n$ such that
 $$
 x_n\cdot e -c t_n \to -\infty \quad \text{ and } \quad \vert U(x_n) - v(t_n,x_n)\vert \to \lim_{\delta \to -\infty}\left(\sup_{\substack{x\cdot e -ct \leq \delta}} \vert U(x) - v(t,x)  \vert\right).
 $$
 We take $(k_n)_{n\in\N} \in \Z^N$ such that $ x_n - k_n  := z_n \in [0,1)^N$ and we define
 $$
 v_n(t,x) := v(t+t_n,x+k_n).
 $$
Owing to the periodicity hypothesis \eqref{hyp per}, we have $v_n = \mc T v_n$. Thanks to Lemma \ref{lemma T}, we can extract from $v_n$ a sequence that converges locally uniformly to a limit $v_\infty$. Moreover, up to another extraction, we assume that $z_n$ converges to some $z \in [0,1]^N$ as $n$ goes to $+\infty$. Now, because $v\leq U$ by construction and by definition of the sequences $(t_n)_{n\in \N}, (x_n)_{n\in \N}$, we have
$$
\forall t\in \R, \ \forall x\in \R^N,\quad U(x) - v_{\infty}(t,x) \leq U(z) - v_{\infty}(0,z),
$$
hence
$$
\forall t\in \R, \quad v_{\infty}(0,z) \leq v_{\infty}(t,z).
$$
Observe that, by construction, $v$ is time-increasing, and so is $v_\infty$. Therefore
$$
\forall t \leq 0, \quad v_{\infty}(t,z) = v_{\infty}(0,z).
$$
Because $v_{\infty} = \mc T v_{\infty}$, evaluating at $(t_1,z)$ and $(t_2,z)$, with $t_1<t_2\leq 0$, we find that
$$
\int_{\R^N}\int_{0}^{+\infty}  \Gamma(\tau,z,y)\left( g(v_{\infty}(t_2-\tau,y))-g(v_{\infty}(t_1-\tau,y))\right) d\tau dy =0.
$$
Because $v_{\infty}(t,y)$ is increasing with respect to $t\in \R$, we eventually infer that
$$
\forall t \leq 0,\forall x\in \R^N \quad v_{\infty}(t,x) = v_{\infty}(0,x).
$$
Therefore, 
$$
v_{\infty}(0,\cdot) = T v_{\infty}(0,\cdot),
$$
where $T$ is defined in \eqref{def T}. Owing to Proposition \ref{sol stat}, it follows that either $v_{\infty}(0,\cdot) \equiv~0$ or $v_{\infty}(0,\cdot) \equiv U$.
%
%
%
%
%
%
%
Let us show that $v_{\infty}(0,\cdot)$ is not identically equal to zero.

Because $v$ is time non-decreasing, we have, for $s\leq t$,
$$
\ul u(s,x)\leq v(s,x) \leq v(t,x), 
$$
hence
$$
\sup_{s\leq t} \ul u(s,x) \leq v(t,x).
$$
It is readily seen from the shape of $\ul u$ (given by Proposition \ref{prop sol}) that there are $\delta \in \R$ and $\eta>0$ such that
$$
\eta \leq \sup_{s\leq t} \ul u(s,x) \quad \text{ for } \quad x\cdot e - ct \leq \delta.
$$
Then, by definition of $v_n$, we have
$$
\eta \leq v_{n}(t,x) \quad \text{ for } \quad x\cdot e - ct + x_n\cdot e - ct_n \leq \delta.
$$
Because $x_n\cdot e -c t_n \to -\infty$ as $n$ goes to $+\infty$, we find that
$$
\forall t\in \R, \forall x\in \R^N,\quad \eta \leq v_{\infty}(t,x),
$$
which implies that $v_\infty \equiv U$. Hence,
$$
\lim_{\delta \to -\infty}\left(\inf_{\substack{x\cdot e -ct \leq \delta}} \vert  v(t,x) - U(x)  \vert \right)= 0,
$$
this concludes the proof.
\end{proof}

\subsection{Non-existence of waves}\label{sec non}

This section is dedicated to proving Theorem~\ref{th non}, that is, the non-existence of traveling waves for \eqref{eq waves} with speed lesser than $c^\star(e)$.

The key idea is to build subsolutions to \eqref{eq waves}, with support contained into a set of the form $\{ x\in \R^N \ : \ x\cdot e \in[ ct, ct + A] \}$, for some $A>0$ and for $c<c^{\star}(e)$. Then, by comparison, any traveling wave would have to move faster than these subsolutions.\\

The strategy we employ to build these subsolutions is inspired by a similar one in the theory of KPP reaction-diffusion equations (see \cite{GMZ} for instance): the idea is to build subsolutions of the form $\phi(x)e^{-\rho(x\cdot e -ct)}$ - just like we did for supersolutions above, but now with $\phi, \rho$ complex. It relies on some arguments from complex analysis and petrubation theory.\\

In order to build subsolutions for our equation, we need to work with a penalization of $\Gamma$. For $\delta \in (0,1)$, we introduce
$$
\Gamma_\delta(t,x,y) := (1-\delta)\max\{ \Gamma(t,x,y)-\delta,0\}.
$$
Owing to hypothesis \eqref{1 hyp waves per}, $\Gamma_\delta$ is compactly supported in the sense that there is $R>0$ such that
\begin{equation}\label{supp compact}
\Gamma_\delta(t,x,y) = 0 \quad \text{ for }\  \vert x - y \vert >R.
\end{equation}
Up to taking $\delta>0$ small enough, the kernel $\Gamma_\delta \leq \Gamma$ satisfies the same hypotheses than $\Gamma$ (we need $\delta>0$ to be small enough in order to have the non-degeneracy condition: $\Gamma_\delta(t,x,y) >\e$ when $t+\vert x-y\vert <r$, for some $r,\e>0$). We define the penalized operator
$$
\mc L^\delta u :=  \int_{\R^N}\int_{0}^{+\infty}\Gamma_\delta(\tau,x,y)u(t-\tau,y) d\tau dy.
$$
We also define the operator $\mc S_{\rho,c}^\delta$ in a similar way, and we let $\lambda_1^\delta(\rho,c)$ denote its principal periodic eigenvalue, and then we let $c^{\star}_\delta(e)$ denote the spreading speed defined by formula \eqref{def c}, with $\lambda^\delta_1(\rho,c,e)$ instead of $\lambda_1(\rho,c,e)$. Using the same arguments as in the proof of Proposition \ref{dec}, we can show that $\lambda_1^\delta(\rho,c,e)$ converges to $\lambda_1(\rho,c,e)$ locally uniformly in $c\geq 0$ when $\delta$ goes to zero. This implies that, for every $e\in \mathbb{S}^{N-1}$, 
$$
c^\star_\delta(e) \underset{\delta \to 0}{\longrightarrow} c^\star(e).
$$
The key result of this section is the existence of subsolution for the penalized linear equation:
\begin{prop}\label{sub sol slow}
Let $\delta \in (0,1)$ and $e\in \mathbb{S}^{N-1}$ be fixed. Then, if $c<c^{\star}_\delta(e)$ is close enough to $c^{\star}_\delta(e)$, there is a function $v(t,x)$ that is continuous in $t,x$, non-negative, non-zero and such that there is $A>0$ such that $v(t,x) =0$ when $\vert x\cdot e -ct\vert >A$, which satisfies
\begin{equation}\label{L sub}
\forall t \in \R, \ \forall x\in \R^N, \quad \mc L^\delta v(t,x) \geq v(t,x).
\end{equation}
\end{prop}
Before presenting the proof of Proposition \ref{sub sol slow}, let us explain how it yields Theorem~\ref{th non}.

\begin{proof}[Proof of Theorem \ref{th non}]
We argue by contradiction. Assume that there is a traveling wave $u$ solution of \eqref{eq waves} in the direction $e\in \mathbb{S}^{N-1}$ with speed $c\in [0,c^{\star}(e))$, connecting $U$ to $0$ for some $U>0$, $U\in C^0_{per}(\R^N)$. Let $\delta>0$ be small enough so that $c^{\star}_\delta(e) > c$.


Owing to Proposition \ref{sub sol slow}, we can find $\ol c \in (c,c^{\star}_\delta(e))$ such that there is a function $v$ such that
$$
\mc L^\delta v \geq v,
$$
and that travels with speed $\ol c$ in the direction $e$. Let $\e>0$ be such that, up to a translation in time, we have,
$$
\forall t\leq 0, \quad \e v(t,\cdot) \leq u(t,\cdot).
$$
Moreover, up to decreasing $\e>0$, we ensure that
$$
1-\delta \leq 1-C\e\|v \|_{L^{\infty}},
$$
where $C$ is the constant in \eqref{hyp g}.
We define
$$
T^\star := \max \{ T \geq 0 \ : \e v(t,\cdot) \leq u(t,\cdot) \ \text{ for all }\ t\in [0,T] \}.
$$
It is readily seen that, for $t\leq T^\star$ and for $x\in \R^N$,
\begin{multline}\label{non existence abs}
\e v(t,x) \leq \mc L^\delta (\e v)(t,x) \leq \frac{(1-\delta)}{(1-C\e \| v\|_{L^{\infty}})}\mc T(\e v)(t,x)\\ \leq \mc T(\e v)(t,x) \leq \mc T(u)(t,x) = u(t,x). 
\end{multline}
Now, assume that there is $x^\star$ such that
$$
\e v(T^\star,x^\star) = u(T^\star,x^\star).
$$
Then, evaluating \eqref{non existence abs} at $t = T^\star$ and $x= x^\star$, we would find that $\mc T(\e v)(T^\star,x^\star) \leq \mc T(u)(T^\star,x^\star)$, which would imply that $\e v(t,x) = u(t,x)$ for every $t\leq T^\star, x\in \R^N$, which is not possible, because $ v(t,x)$ has support contained in a set $\vert x \cdot e - \ol c t \vert < A$, for some $A>0$.

If there is not such $x^\star$, we can find a sequence $(x_n)_{n\in\N}$ such that
$$
u(T^\star,x_n) - \e v(T^\star,x_n) \underset{n\to+\infty}{\longrightarrow} 0,
$$
and arguing as in the proof of Proposition \ref{sol stat}, second case, we would again reach a contradiction. 
\end{proof}

Before turning to the proof of Proposition \ref{sub sol slow}, we start with two intermediary results. In order to simplify the notations, we assume that $\delta$ is fixed and we omit it in the computations, that is, we drop the index $\delta$ everywhere.

We just have to keep in mind that $\Gamma_\delta$ is compactly supported in the sense of \eqref{supp compact}. This is actually only used in the proof of Proposition \ref{sub sol slow}, and not in the following intermediary results, that are true for general kernels $\Gamma$.

We start with a technical result. For $z\in \C$, we let $\Re(z), \Im(z)$ denote the real and imaginary parts of $z$.
\begin{prop}\label{prop comp}
For $\rho,c\geq 0$, let $\lambda_1(\rho,c)$ and $\phi_{\rho,c}$ denote the principal periodic eigenvalue and eigenfunction of $\mc S_{\rho,c}$, normalized so that $\phi_{\rho,c}(0)=1$. For every $c\geq 0$, the maps
$$
\rho \mapsto \lambda_1(\rho,c) \in \mathbb{C}
$$
and
$$
\rho \mapsto \phi_{\rho,c} \in C_{per}^{0}(\R^N, \mathbb{C})
$$
can be holomorphically extended to a complex neighborhood of the positive real axis $\{ \rho \geq 0\}$. The complex-valued functions still satisfy $\mc S_{\rho,c}\phi_{\rho,c} = \lambda_1(\rho,c)\phi_{\rho,c}$. In addition, $\rho \mapsto \Re(\phi_{\rho,c})$ and $\rho \mapsto \Im(\phi_{\rho,c})$ are continuous (with respect to the $C^{0}(\R^N)$ topology).
\end{prop}
Since the operator $\mc S_{\rho,c}$ is compact and holomorphic with respect to $\rho$, and since the principal eigenvalue $\lambda_1(\rho,c)$ is isolated and has multiplicity equal to $1$ (owing to the Krein-Rutman Theorem), the proposition above follows from standard perturbation theory, we refer to \cite[Chapter~7]{Kato} for the details.

\begin{lemma}\label{lemma inf}
We have
$$
\lambda_1(\rho,c) \underset{\rho \to +\infty}{\longrightarrow}+\infty,
$$
and this limit holds locally uniformly in $c\geq 0$.
\end{lemma}
\begin{proof}
Take $\ol x$  such that $\phi_{\rho,c}(\ol x) = \min_{\R^N}\phi_{\rho,c}$, where $\phi_{\rho,c}$ is a positive principal periodic eigenfunction associated with $\lambda(\rho,c)$. Then
\begin{multline*}
\lambda_1(\rho,c)  \geq \int_{\tau >0}\int_{y\in \R^N} \Gamma(\tau,\ol x, y)e^{-\rho c \tau}e^{-\rho (y-\ol x)\cdot e}dy d\tau \\ = \int_{\tau >0}\int_{z\in \R^N} \Gamma(\tau,\ol x, \ol x + z)e^{-\rho c \tau}e^{-\rho z\cdot e}dz d\tau.
\end{multline*}
Now, by hypothesis on $\Gamma$ (see Section \ref{sec hyp}), we have that there are $r,\epsilon$ so that $\Gamma>\e$ if $t + \vert x - y\vert < r$. Hence
$$
\lambda_1(\rho,c)  \geq  \int_{\tau \in(0,\frac{r}{2})}\int_{\vert z\vert < \frac{r}{2}} \Gamma(\tau,\ol x, \ol x + z)e^{-\rho c \tau}e^{-\rho z\cdot e}dz d\tau 
\geq \e \frac{1-e^{-\frac{\rho cr}{2}}}{\rho c} \int_{\vert z\vert \leq \frac{r}{2}}e^{-\rho z \cdot e}dz d\tau.
$$
The rightmost term in these inequalities goes to $+\infty$ as $\rho$ goes to $+\infty$ locally uniformly in $c\geq 0$.
\end{proof}

We are now in position to prove Proposition \ref{sub sol slow}. Let us explain how we build the subsolutions $v$.

First, observe that, owing to Lemma \ref{lemma inf}, to the definition \eqref{def c} of $c^{\star}(e)$ and to the continuity of $\lambda_1(\rho,c)$, we have that there is $\rho^\star$ such that
$$
\l(\rho^\star,c^{\star}(e)) =1.
$$
Owing to Proposition \ref{prop comp} and to Rouch\'e's theorem, there is $\eta>0$ such that, for every $c\in (c^{\star}(e)-\eta,c^{\star}(e)]$, there is $\rho(c) \in \C$ that satisfies $\lambda_1(\rho(c),c)=1$. Moreover, 
$\rho(c) \to \rho^\star$ as $c$ goes to $c^{\star}(e)$. We define
\begin{equation}\label{def v c}
v_c(t,x) := \Re(\phi_{\rho(c),c}(x)e^{-\rho(c)(x\cdot e -ct)}).
\end{equation}
Therefore, we have $\mc L v_c = v_c$.
We let $\phi_R, \phi_I$ denote the real and imaginary parts of $\phi_{\rho(c),c}$ (these are real continuous periodic functions) and $\rho_R, \rho_I$ the real and imaginary parts of $\rho(c)$. As $c$ goes to $c^{\star}(e)$, we have $\phi_{R} \to \phi_{\rho^\star,c^{\star}(e)}$, $\phi_I \to 0$, uniformly in $x$, and $\rho_R \to \rho^\star$ and $\rho_I \to 0$. We take $c$ close enough to $c^{\star}(e)$ such that
$$
\min_{[0,1]^N}\phi_{R} >\max_{[0,1]^N}\vert \phi_{I} \vert.
$$
We can rewrite \eqref{def v c} as follows:
\begin{equation}\label{vri}
v_c(t,x) = \Big( \phi_R(x)\cos(\rho_I(x\cdot e -ct)) + \phi_I(x)\sin(\rho_I(x\cdot e -ct)) \Big)e^{-\rho_R(x\cdot e -ct)}.
\end{equation}
Observe that, if $x\cdot e -ct = \pm\frac{3\pi}{4\rho_I}$, then
$$
v_c(t,x) = \frac{\sqrt{2}}{2}(-\phi_R(x)\pm \phi_I(x))e^{\mp\frac{3\pi \rho_R}{4\rho_I}}<0.
$$
We define
\begin{equation}\label{v plus}
v_c^+(t,x) = 
\begin{cases}
\max\{ v_c(t,x) , 0\} \quad &\text{if} \quad  \vert x\cdot e -ct \vert \leq \frac{3\pi}{4\vert \rho_I\vert}, \\
0 \quad &\text{elsewhere}.
  \end{cases}
\end{equation}
Then, $v_c^+$ is continuous, non-negative, not everywhere equal to zero. Let us show that it is the good candidate to prove Proposition \ref{sub sol slow}, that is, let us show that
$$
\mc L v_c^+ \geq v_c^+.
$$
Remember that we dropped the index $\delta$, and that $\Gamma$ (that is, $\Gamma_\delta$) is compactly supported in the sense of \eqref{supp compact}. It is used in the following proof.

\begin{proof}[Proof of Proposition \ref{sub sol slow}]
For notational simplicity, we define $\e := \vert \rho_I\vert$. Up to taking $c$ closer to $c^{\star}(e)$, we can make $\e$ as small as needed. Clearly, \eqref{L sub} holds true for $(t,x)$ such that $v_c(t,x)\leq 0$. If $(t,x)$ is such that  $\vert x\cdot e -ct \vert \leq \frac{3\pi}{4 \e}$ and such that $v_c(t,x)\geq 0$, then $v_c^+(t,x)=v_c(t,x) = \mc L v_c(t,x)$. Therefore, to prove that \eqref{L sub} holds true, it is sufficient to show that, for $(t,x)$ such that $\vert x\cdot e -ct\vert \leq \frac{3\pi}{4\e}$,
\begin{equation*}
\int_{\R^N}\int_{0}^{+\infty}\Gamma(\tau,x,y) v_c^+(t-\tau,y)d\tau dy \geq \int_{\R^N}\int_{0}^{+\infty}\Gamma(\tau,x,y) v_c(t-\tau,y)d\tau dy.
\end{equation*}
To compare those two integrals, we break them into three parts. We define
$$
\begin{cases}
I_1 := \int_{y\cdot e \in [ct -\frac{\pi}{\e} , ct +\frac{3\pi}{4\e}]}\int_{0}^{+\infty}\Gamma(\tau,x,y) (v_{c}^+(t-\tau,y) - v_c(t-\tau,y))d\tau dy, \\
I_2 := \int_{y\cdot e \in [ct +\frac{3\pi}{4\e} , ct +\frac{\pi}{\e}]}\int_{0}^{+\infty}\Gamma(\tau,x,y) (v_{c}^+(t-\tau,y) - v_c(t-\tau,y))d\tau dy,\\
I_3 := \int_{\vert y\cdot e -ct \vert \geq \frac{\pi}{\e}}\int_{0}^{+\infty}\Gamma(\tau,x,y) (v_{c}^+(t-\tau,y) - v_c(t-\tau,y))d\tau dy.
\end{cases}
$$
Let us prove that we can take $\e>0$ small enough so that, for every $t\in \R, x \in \R^N$, $I_1+I_2+I_3 \geq 0$. First, because we assume that $\vert x\cdot e - ct\vert \leq \frac{3\pi}{4\e}$, we see that, if $y$ is such that $\vert y\cdot e -ct \vert \geq \frac{\pi}{\e}$, then $\vert x - y\vert \geq \frac{\pi}{4\e}$. Therefore, up to taking $\e$ small enough, we have $\Gamma(\tau, x,y) =0$ (thanks to the hypothesis that $\Gamma$ is compactly supported), then $I_3 = 0$, for every $t \in \R$, and every $x$ such that $\vert x\cdot e - ct\vert \leq \frac{3\pi}{4\e}$.

\medskip
\emph{Step 1. Estimate on $I_1$.}\\
Let $y\in \R^N$ be such that $y\cdot e \in [ct -\frac{\pi}{\e} , ct +\frac{3\pi}{4\e}]$. We define $\tau_1, \tau_2$ such that
$$
y\cdot e  = c(t-\tau_1)+\frac{3\pi}{4\e}, \quad y\cdot e = c(t-\tau_2)+\frac{5\pi}{4\e}.
$$
We now estimate $v_{c}^+(t-\tau,y), v_c(t-\tau,y)$ for $\tau$ in $[0,\tau_1], [\tau_1,\tau_2]$ and $[\tau_2,+\infty]$. By definition of $v_{c}^+$, we have
$$
v_{c}^+(t-\tau,y) \geq v_c(t-\tau,y) \quad \text{ for }\ \tau \in [0,\tau_1].
$$
Indeed, for such $\tau$, we have $y\cdot e -c(t-\tau) \in [-\frac{\pi}{\e}, \frac{3\pi}{4\e}]$. When $y\cdot e - c(t-\tau) \in [-\frac{3\pi}{4\e}, \frac{3\pi}{4\e}] $, the definition of $v_c^+$ implies that $v_c^+ \geq v_c$. If $y\cdot e - c(t-\tau) \in [-\frac{\pi}{\e}, -\frac{3\pi}{4\e}]$, we have $v_c(t-\tau,y) \leq 0 = v_c^+(t-\tau,y)$.\\

Define $\kappa := \frac{\sqrt{2}}{2}(\min \phi_R - \max \vert \phi_I \vert) >0$. It follows from \eqref{vri} that
$$
v_c(t-\tau,y) \leq -\kappa e^{-\rho_R(y\cdot e -c(t-\tau))}\quad \text{ for }\ \tau \in [\tau_1,\tau_2].
$$
Finally, it is readily seen from \eqref{vri} that, for $M := \max{\phi_R} + \max \vert \phi_I \vert$,
$$
v_c(t-\tau,y) \leq M e^{-\rho_R(y\cdot e -c(t-\tau))}\quad \text{ for }\ \tau \in [\tau_2,+\infty].
$$
Now, we compute
\begin{multline*}
\int_{0}^{+\infty}\Gamma(\tau,x,y)(v_c^+(t-\tau,y)-v_c(t-\tau,y))d\tau  \geq -\int_{\tau_1}^{+\infty}\Gamma(\tau,x,y)v_c(t-\tau,y)d\tau \\
\geq \kappa \int_{\tau_1}^{\tau_2} \Gamma(\tau,x,y)e^{-\rho_R(y\cdot e -c(t-\tau))}d\tau - M \int_{\tau_2}^{+\infty} \Gamma(\tau,x,y)e^{-\rho_R(y\cdot e -c(t-\tau))}d\tau.
\end{multline*}
Let us show that this is non-negative, up to taking $\e$ small enough (independant of $x,t,y$). To do this, define $T_\e = \tau_2 - \tau_1 = \frac{\pi}{2c \e}$ and
$$
h(s) = \kappa \int_{s}^{s+ T_\e} \Gamma(\tau,x,y)e^{-\rho_R(y\cdot e -c(t-\tau))}d\tau - M \int_{s + T_\e}^{+\infty} \Gamma(\tau,x,y)e^{-\rho_R(y\cdot e -c(t-\tau))}d\tau.
$$
We have
$$
h^\prime(s) = e^{-\rho_R (y\cdot e - c(t-s))} \left( \kappa\left(\Gamma(s+T_\e,x,y)e^{-\rho_R c T_\e} -\Gamma(s,x,y)  \right)  + M\Gamma(s+T_\e,x,y)e^{-\rho_R c T_\e} \right).
$$
Therefore, owing to hypothesis \eqref{non exp}, up to taking $\e$ small enough, independent of $t,x,y$, we can ensure that $h^\prime(s)\leq 0$, and because $h(s)\to 0$ as $s$ goes to $+\infty$, then $h(\tau_1)\geq 0$, hence $\int_{0}^{+\infty}\Gamma(\tau,x,y)(v_c^+(t-\tau,y)-v_c(t-\tau,y))d\tau  \geq 0$, for every $t\in \R, x, y\in \R^N$. Therefore, $I_1 \geq 0$.


\medskip
\emph{Step 2. Estimate for $I_2$.}\\
Consider now the situation when $y\cdot e - ct \in [\frac{3\pi}{4\e},\frac{\pi}{\e}]$. Then, in this case, $v_c^+(t-\tau,y) =0$ for all $\tau\geq 0$, and
$$
v_c(t-\tau,y) \leq -\kappa e^{-\rho_R(y\cdot e -c(t-\tau))}\quad \text{ for }\ \tau \in [0,\tau_2],
$$
where $\tau_2$ is defined as in the previous step. Therefore, we still have in this case that
\begin{multline*}
\int_{0}^{+\infty}\Gamma(\tau,x,y)(v_c^+(t-\tau,y)-v_c(t-\tau,y))d\tau \\
\geq \kappa \int_{\tau_1}^{\tau_2} \Gamma(\tau,x,y)e^{-\rho_R(y\cdot e -c(t-\tau))}d\tau - M \int_{\tau_2}^{+\infty} \Gamma(\tau,x,y)e^{-\rho_R(y\cdot e -c(t-\tau))}d\tau,
\end{multline*}
and we can conclude as in the previous step: up to taking $\e$ small enough, for every $t,x$ such that $\vert x\cdot e -ct \vert \leq \frac{3\pi}{4\e}$, we have
$$
I_2 \geq  0.
$$
This concludes this step and the proof.
\end{proof}

\section{Application to the SIR model \eqref{SIR 2}}\label{sec comp}

We now apply our results on the renewal equations to the SIR model \eqref{SIR 2}. In the whole section, $\mu,\alpha, K$ satisfy the hypotheses of Theorem \ref{th sir 1}, that is $\mu, \alpha \in C^0_{per}(\R^N)$, $\alpha,\mu >0$ and $K \in C^0(\R^N\times \R^N)$ is periodic, non-negative, non-zero, symmetric and decays faster than any exponential.\\

Let us first explain how the SIR model \eqref{SIR 2} rewrites as the integral equations \eqref{eq}. This fact was observed by several authors in different contexts (see \cite{formulation,Di1} for instance) - we prove it in our setting for the sake of completeness.\\

\begin{prop}\label{equiv SIR int}
Let $S_0\in C^0_{per}(\R^N)$, $I_0 \in C^0(\R^N)$, with $S_0>0$ and $I_0$ non-negative and bounded.


\begin{itemize}

\item If $(S(t,x),I(t,x))$ solves \eqref{SIR 2} with initial datum $(S_0,I_0)$, then $u(t,x) := - \ln \left(\frac{S(t,x)}{S_0(x)}\right)$ solves \eqref{eq} with $\Gamma,f,g$ given by \eqref{gamma 2}.

\item If $u$ solves \eqref{eq} with $\Gamma,f,g$ given by \eqref{gamma 2}, then $S(t,x) := S_0(x)e^{-u(t,x)}$ and $I(t,x) := -\int_{0}^t e^{- \mu(x)(t-\tau)}\partial_{t}S(\tau,x)  d\tau +I_0(x)e^{-\mu(x)t}$ 
solve \eqref{SIR 2} with initial datum $(S_0,I_0)$.

\end{itemize}

\end{prop}
Before turning to the proof of this proposition, observe that, combining it with Proposition \ref{prop cv} gives the existence and uniqueness of solutions of the SIR system~\eqref{SIR 2}.
\begin{proof}[Proof of Proposition \ref{equiv SIR int}.]
Let us show the first point. Observe that, if $(S,I)$ is solution to \eqref{SIR 2}, then the function $u(t,x) := - \ln(\frac{S(t,x)}{S_0(x)})$ is continuous and positive on $(0,+\infty)\times \R^N$.
The second equation of \eqref{SIR 2} rewrites
$$
\partial_t I(t,x) +\mu(x)I(t,x) = -  \partial_t S(t,x).
$$
Therefore,
$$
I(t,x) = -\int_{0}^t e^{- \mu(x)\tau}\partial_{t}S(t-\tau,x)  d\tau +I_0(x)e^{-\mu(x)t}. 
$$
Plotting this in the first equation of \eqref{SIR 2} yields
\begin{multline}\label{a tech}
\partial_tu(t,x) =-\frac{\partial_{t}S(t,x)}{S(t,x)} =   -\alpha(x)\int_{0}^t \int_{y\in \R^N} K(x,y)e^{-\mu(y)\tau}\partial_{t}S(t-\tau,y)  dy d\tau \\ +\alpha(x)\int_{y\in \R^N}K(x,y)I_0(y) e^{-\mu(y)t}dy.
\end{multline}
By definition of $u$, integrating with respect to the $t$ variable between $0$ and $T$ and changing the order the integrals yields
\begin{multline*}
u(T,x) =   -\alpha(x)\int_{0}^T \int_{y\in \R^N} K(x,y)e^{-\mu(y)\tau}(S(T-\tau,y)-S_0(y))  dy d\tau \\ + \alpha(x)\int_0^T\int_{y\in \R^N}K(x,y)I_0(y) e^{-\mu(y)t} dy dt.
\end{multline*}
Now, remembering that $g(z) := 1 -e^{-z}$, we have
\begin{multline*}
u(T,x) =   \alpha(x)\int_{0}^T \int_{y\in \R^N} S_0(y)K(x,y)e^{-\mu(y)\tau}g(u(T-\tau,y))  dy d\tau \\ +\alpha(x)\int_0^T\int_{y\in \R^N}K(x,y)I_0(y) e^{-\mu(y)t}dt dy.
\end{multline*}
which proves the first point. \\

The second point can be proved using the same computations. One has to observe that, if $u$ is the unique solution of \eqref{eq} - provided by Proposition \ref{prop cv} - with $\Gamma,f,g$ given by \eqref{gamma 2}, then, the functions $(S,I)$ defined as fonctions of $u$ in the second point have the required regularity to be considered solutions of \eqref{SIR 2}.
\end{proof}

We now turn to the proofs of Theorems \ref{th sir 1}, \ref{th sir 2}. The idea is to use the change of functions presented in Proposition \ref{equiv SIR int} and to apply Theorems \ref{th threshold}, \ref{th waves} and \ref{th non} to the renewal equation thus obtained.\\

First, observe that, if $\Gamma,f,g$ are given by \eqref{gamma 2} (with $\alpha,\mu, K, S_0$ satisying the above hypotheses),  they satisfy the hypotheses required to apply Theorems \ref{th threshold}, \ref{th waves} and \ref{th non} (these hypotheses are given in Section \ref{sec results}).

\begin{proof}[Proof of Theorem \ref{th sir 1}]
Let $S_0,I_0$ be such that $S_0$ is continuous periodic and strictly positive, and $I_0$ is continuous, non-negative, non-zero, and compactly supported. Let $(S,I)$ the solution of \eqref{SIR 2} arising from this initial datum. \\

Owing to Proposition \ref{equiv SIR int}, we have that $u(t,x) = -\ln(\frac{S(t,x)}{S_0(x)})$ is the solution of \eqref{eq} with $\Gamma,f,g$ given by \eqref{gamma 2}.\\

Let $\l$ be the principal periodic eigenvalue of the operator
 $$
 \phi \mapsto \int_{\R^N}\frac{\alpha(x)S_0(y)}{\mu(y)}K(x,y) \phi(y)dy.
 $$
Let us prove that the epidemic propagates in the sense of Definition \ref{def prop SIR} when $\lambda_1>1$. In this case, Theorem \ref{th threshold} tells us that the epidemic propagates for \eqref{eq} (in the sense of Definition \ref{def prop}). Hence, $u(t,x)$ converges to some $u_{\infty}(x)$, that satisfies, for some $\e,R>0$,
$$
u_{\infty}(x) > \e ,\quad \text{ for } \vert x \vert\geq R.
$$
Because $S(t,x) = S_0(x)e^{-u(t,x)}$, we see that $S(t,x)$ converges to $S_{\infty}(x) := S_0(x)e^{-u_{\infty}(x)}$, and
$$
S_{\infty}(x) < S_0(x)e^{-\e},\quad \text{ for } \vert x \vert \geq R,
$$
hence
$$
\sup_{\vert x \vert >R} (S_{\infty}(x) - S_0(x))<0.
$$
Moreover, because $S(t,x)$ is strictly decreasing with respect to $t$ for every $x$, the result follows by continuity of $S_{\infty}$ and $S_0$.\\

To conclude, let $\mc S := S_0 e^{-U}$, where $U$ is given by Theorem \ref{th threshold}. Then, we have
$$
\vert S_\infty(x) - \mc S(x)\vert \leq \| S_0\|_{L^\infty} \vert e^{-u_\infty(x)} - e^{-U(x)}\vert \leq \| S_0\|_{L^\infty} \vert u_\infty(x) -U(x)\vert,
$$
and Theorem \ref{th threshold} allows to conclude.

The proof for the fading out when $\l\leq 0$ follows the same lines.
\end{proof}

We now turn to the existence and non-existence of waves.
\begin{proof}[Proof of Theorem \ref{th sir 2}]
Let $\alpha,\mu,K$ and $S_{-\infty}$ satisfy the hypotheses of Theorem \ref{th sir 2}. \\

\medskip
\emph{Step 1. Existence of waves.}\\
Assume that $\lambda_1>1$.  Let $c>c^\star(e)$. Then, Theorem \ref{th waves} tells us that the renewal equation \eqref{eq waves} with $\Gamma,g$ given by \eqref{gamma 2} (with $S_0$ replaced by $S_{-\infty}$) admits a traveling waves connecting $0$ to some $U$ with speed $c$ in the direction $e$. Let $u$ be such a wave.\\

Let us define
$$
S(t,x) = S_{-\infty}e^{-u(t,x)}, \quad I(t,x):= -\int_{-\infty}^t e^{- \mu(x)(t-\tau)}\partial_{t}S(\tau,x)  d\tau .
$$
Up to doing the same computations as those done in the proof of Proposition \ref{equiv SIR int}, it is readily seen that $S,I$ solve \eqref{SIR 2}. Let us prove that these are traveling fronts in the sense of Definition \ref{def tw SIR}.

\medskip
\emph{Limit when  $x \cdot e - ct \to -\infty$.}\\
We have
$$
\sup_{x\cdot e -c t \leq -\delta} \vert u(t,x) - U(x) \vert\underset{\delta \to +\infty}{\longrightarrow} 0.
$$
Hence
\begin{equation*}
\begin{array}{rl}
\sup_{x\cdot e -c t \leq -\delta} \vert S(t,x)- S_{-\infty}(x)e^{-U(x)} \vert &= \sup_{x\cdot e -c t \leq -\delta} \vert S_{-\infty}(x)( e^{-u(t,x)}- e^{-U(x)}) \vert \\
&\leq \|S_{-\infty}\|_{L^\infty}\sup_{x\cdot e -c t \leq -\delta} \vert u(t,x)-U(x) \vert \underset{\delta \to +\infty}{\longrightarrow} 0.
\end{array}
\end{equation*}
Let us show that
\begin{equation}\label{I wave 1}
\sup_{x\cdot e -c t \leq -\delta}  I(t,x) \underset{\delta \to +\infty}{\longrightarrow} 0.
\end{equation}
We have
$$
I (t,x) = -\int_{-\infty}^{t} \partial_t S(\tau,x)  e^{\mu(x)(\tau-t)}d\tau,
$$
and then
$$
I(t,x) = \int_{-\infty}^{t}S(\tau,x)  \mu(x)e^{\mu(x)(\tau-t)}d\tau - S(t,x).
$$
Let $(x_n)_{n\in \N}\in (\R^N)^\N$ and $(t_n)_{n\in\N} \in \R^\N$ be such that $x_n \cdot e - c t_n := -\delta_n \to -\infty$ as $n$ goes to $+\infty$. Then
\begin{align*}
I(t_n,x_n)  &= \int_{-\infty}^{t_n}S(\tau,x_n)  \mu(x_n)e^{\mu(x_n)(\tau-t_n)}d\tau - S_{+\infty}(x_n) +  S_{+\infty}(x_n) - S(t_n,x_n)
\\ &= \int_{-\infty}^{t_n}(S(\tau,x_n) - S_{+\infty}(x_n)) \mu(x_n)e^{\mu(x_n)(\tau-t_n)}d\tau +  S_{+\infty}(x_n) - S(t_n,x_n) \\
&\leq\| S_{-\infty}\|_{L^{\infty}} \|\mu\|_{L^{\infty}}\left(\int_{-\infty}^{t_n - \frac{\delta_n}{2 c}}\mu(x_n)e^{\mu(x_n)(\tau-t_n)}d\tau \right) \hspace{30mm} \\ &\hspace{15mm}+ \left(\int_{t_n - \frac{\delta_n}{2 c}}^{t_n} \mu(x_n)e^{\mu(x_n)(\tau-t_n)}d\tau\right)\sup_{x\cdot e - ct \leq -\frac{\delta_n}{2}}\vert S(t,x) - S_{+\infty}(x)  \vert \\ &\hspace{15mm}+ \vert S_{+\infty}(x_n) - S(t_n,x_n)\vert \\
&\leq \|S_{-\infty}\|_{L^{\infty}}\|\mu\|_{L^{\infty}} e^{-\mu(x_n) \frac{\delta_n}{2c}} + \sup_{x\cdot e - ct \leq -\frac{\delta_n}{2}}\vert S(t,x) - S_{+\infty}(x)  \vert+  \vert S_{+\infty}(x_n) - S(t_n,x_n)\vert.
\end{align*}
This goes to zero as $n$ goes to $+\infty$, hence \eqref{I wave 1} follows.

\medskip
\emph{Limit when $x \cdot e - ct \to +\infty$.}\\
We have
$$
\sup_{x\cdot e -c t \geq \delta} \vert u(t,x)  \vert \underset{\delta \to +\infty}{\longrightarrow} 0.
$$
Hence
\begin{equation*}
\begin{array}{rl}
\sup_{x\cdot e -c t \geq \delta} \vert S(t,x) - S_{-\infty}(x)\vert &= \sup_{x\cdot e -c t \geq \delta} \vert S_{-\infty}(t,x)(1-e^{-u(t,x)})  \vert \\  
&\leq \|S_{-\infty}\|_{L^\infty} \sup_{x\cdot e -c t \geq \delta} \vert u(t,x) \vert  \underset{\delta \to +\infty}{\longrightarrow} 0.
\end{array}
\end{equation*}
To prove that 
$$
\sup_{x\cdot e -c t \geq \delta} \vert I(t,x) \vert  \underset{\delta \to +\infty}{\longrightarrow} 0,
$$
we could argue as in the first step, however, there is here a simpler argument. Observe that, for $t>0$ and $x\in\R^N$,
$$
\partial_t S(t,x) +\partial_t I(t,x) \leq  0,
$$
hence
$$
0 \leq \sup_{x\cdot e -c t \geq \delta}  I(t,x)  \leq  \sup_{x\cdot e -c t \geq \delta}  \vert S_{-\infty}(x) - S(t,x) \vert,
$$
and the result follows.\\

\medskip
\emph{Step 2. Non-existence of waves.}\\
Assume by contradiction that $c<c^\star(e)$ and that there is a traveling wave solution to \eqref{SIR 2} with this speed in direction $e$. Let $S(t,x)$ be the wave of susceptibles. Then, doing computations similar to those in the proof of Proposition \ref{equiv SIR int} would yield that $u(t,x) = - \ln(\frac{S(t,x)}{S_{-\infty}(x)})$ is a traveling wave solution of \eqref{eq waves} with speed $c$ in the direction $e$. This is impossible owing to Theorem \ref{th non}, hence the result.
\end{proof}


\begin{thebibliography}{99}


\bibitem{A}
D.~G. Aronson.
\newblock Non-negative solutions of linear parabolic equations.
\newblock {\em Ann. Scuola Norm. Sup. Pisa Cl. Sci. (3)}, 22:607--694, 1968.



\bibitem{Ar}
D.~G. Aronson.
\newblock The asymptotic speed of propagation of a simple epidemic.
\newblock pages 1--23. Res. Notes Math., No. 14, 1977.


\bibitem{AR}
C.~Atkinson and G.~E.~H. Reuter.
\newblock Deterministic epidemic waves.
\newblock {\em Math. Proc. Cambridge Philos. Soc.}, 80(2):315--330, 1976.


\bibitem{BCV}
H.~Berestycki, J.~Coville, and H.-H. Vo.
\newblock On the definition and the properties of the principal eigenvalue of
  some nonlocal operators.
\newblock {\em J. Funct. Anal.}, 271(10):2701--2751, 2016.



\bibitem{BDR}
H. Berestycki, R. Ducasse and L. Rossi.
\newblock Generalized principal eigenvalues for heterogeneous road-field systems.
\newblock {\em Comm. in Contemporary Math.}, 2019, in press.


\bibitem{BHper}
H.~Berestycki and F.~Hamel.
\newblock wave propagation in periodic excitable media.
\newblock {\em Comm. Pure Appl. Math.}, 55(8):949--1032, 2002.

\bibitem{BHgtw}
H.~Berestycki and F.~Hamel.
\newblock Generalized travelling waves for reaction-diffusion equations.
\newblock In {\em Perspectives in nonlinear partial differential equations},
  volume 446 of {\em Contemp. Math.}, pages 101--123. Amer. Math. Soc.,
  Providence, RI, 2007.
%


\bibitem{BNV}
H.~Berestycki, L.~Nirenberg, and S.~R.~S. Varadhan.
\newblock The principal eigenvalue and maximum principle for second-order
  elliptic operators in general domains.
\newblock {\em Comm. Pure Appl. Math.}, 47(1):47--92, 1994.
%
\bibitem{BRRSIR}
H.~Berestycki, J.-M. Roquejoffre, and L.~Rossi.
\newblock Propagation of epidemics along lines with fast diffusion.
\newblock {\em Bull. Math. Biol.}, 83(1):Paper No. 2, 34, 2021.
%
\bibitem{BReigen}
H.~Berestycki and L.~Rossi.
\newblock On the principal eigenvalue of elliptic operators in {$\mathbb R\sp
  N$} and applications.
\newblock {\em J. Eur. Math. Soc. (JEMS)}, 8(2):195--215, 2006.




\bibitem{formulation} 
D.~Breda, O.~Diekmann, W.~F. de~Graaf, A.~Pugliese, and R.~Vermiglio.
\newblock On the formulation of epidemic models (an appraisal of Kermack and
  McKendrick).
\newblock {\em J. Biol. Dyn.}, 6(suppl. 2):103--117, 2012.


\bibitem{B}
H.~Brezis.
\newblock {\em Functional analysis, {S}obolev spaces and partial differential
  equations}.
\newblock Universitext. Springer, New York, 2011.

\bibitem{CR}
X.~Cabr{\'e} and J.-M. Roquejoffre.
\newblock The influence of fractional diffusion in {F}isher-{KPP} equations.
\newblock {\em Comm. Math. Phys.}, 320(3):679--722, 2013.


\bibitem{Di1}
O.~Diekmann.
\newblock Thresholds and travelling waves for the geographical spread of
  infection.
\newblock {\em J. Math. Biol.}, 6(2):109--130, 1978.

\bibitem{Di2}
O.~Diekmann.
\newblock Run for your life. {A} note on the asymptotic speed of propagation of
  an epidemic.
\newblock {\em J. Differential Equations}, 33(1):58--73, 1979.



  
 \bibitem{Duc}
R.~Ducasse.
\newblock Qualitative properties of spatial epidemiological models.
\newblock {\em Preprint}, 2018.
 
  
  
\bibitem{DG}
A.~Ducrot and T.~Giletti.
\newblock Convergence to a pulsating travelling wave for an epidemic
  reaction-diffusion system with non-diffusive susceptible population.
\newblock {\em J. Math. Biol.}, 69(3):533--552, 2014.

\bibitem{FF}
J.~Fang and G.~Faye.
\newblock Monotone traveling waves for delayed neural field equations.
\newblock {\em Math. Models Methods Appl. Sci.}, 26(10):1919--1954, 2016.


\bibitem{GMZ}
T.~Giletti, L.~Monsaingeon, and M.~Zhou.
\newblock A {KPP} road-field system with spatially periodic exchange terms.
\newblock {\em Nonlinear Anal.}, 128:273--302, 2015.

\bibitem{HI}
Y.~Hosono and B.~Ilyas.
\newblock Traveling waves for a simple diffusive epidemic model.
\newblock {\em Math. Models Methods Appl. Sci.}, 5(7):935--966, 1995.



\bibitem{Kato}
T.~Kato.
\newblock {\em Perturbation theory for linear operators}.
\newblock Springer-Verlag, Berlin, second edition, 1976.
\newblock Grundlehren der Mathematischen Wissenschaften, Band 132.

\bibitem{K1}
D.~Kendall.
\newblock Discussion of ``measles periodicity and community size" by ms bartlett.
\newblock {\em J. Roy. Stat. Soc. A}, 120:64--76, 1957.

\bibitem{K2}
D.~G. Kendall.
\newblock Mathematical models of the spread of infection.
\newblock {\em Mathematics and computer science in biology and medicine}, pages
  213--225, 1965.


\bibitem{KmcK1}
W.~O. Kermack and A.~G. McKendrick.
\newblock Contributions to the mathematical theory of epidemics. i.
\newblock {\em Bulletin of mathematical biology}, 53(1-2):33--55, 1991.


\bibitem{KmcK2}
W.~O. Kermack and A.~G. McKendrick.
\newblock Contributions to the mathematical theory of epidemics. ii. the
  problem of endemicity.
\newblock {\em Proc. R. Soc. Lond. A}, 138(834):55--83, 1932.

\bibitem{KmcK3}
W.~O. Kermack and A.~G. McKendrick.
\newblock Contributions to the mathematical theory of epidemics. iii. further
  studies of the problem of endemicity.
\newblock {\em Proc. R. Soc. Lond. A}, 141(843):94--122, 1933.




\bibitem{KPP}
A.~N. Kolmogorov, I.~G. Petrovski\u{\i}, and N.~S. Piskunov.
\newblock {\'E}tude de l'{\'e}quation de la diffusion avec croissance de la
  quantit{\'e} de mati{\`e}re et son application {\`a} un probl{\`e}me
  biologique.
\newblock {\em Bull. Univ. Etat. Moscow Ser. Internat. Math. Mec. Sect. A},
  1:1--26, 1937.

\bibitem{KR}
M.~G. Kre{\u\i}n and M.~A. Rutman.
\newblock Linear operators leaving invariant a cone in a {B}anach space.
\newblock {\em Amer. Math. Soc. Translation}, 1950(26):128, 1950.





\bibitem{M}
D.~Mollison.
\newblock Possible velocities for a simple epidemic.
\newblock {\em Advances in Appl. Probability}, 4:233--257, 1972.

\bibitem{NR}
G.~Nadin and L.~Rossi.
\newblock Transition waves for {F}isher--{KPP} equations with general
  time-heterogeneous and space-periodic coefficients.
\newblock {\em Anal. PDE}, 8(6):1351--1377, 2015.

\bibitem{T1}
H.~R. Thieme.
\newblock A model for the spatial spread of an epidemic.
\newblock {\em J. Math. Biol.}, 4(4):337--351, 1977.

\bibitem{T2}
H.~R. Thieme.
\newblock Asymptotic estimates of the solutions of nonlinear integral equations
  and asymptotic speeds for the spread of populations.
\newblock {\em J. Reine Angew. Math.}, 306:94--121, 1979.

\bibitem{T3}
H.~R. Thieme.
\newblock Density-dependent regulation of spatially distributed populations and
  their asymptotic speed of spread.
\newblock {\em J. Math. Biol.}, 8(2):173--187, 1979.

\bibitem{TZ}
H.~R. Thieme and X.-Q. Zhao.
\newblock Asymptotic speeds of spread and traveling waves for integral
  equations and delayed reaction-diffusion models.
\newblock {\em J. Differential Equations}, 195(2):430--470, 2003.


\bibitem{W}
H.~F. Weinberger.
\newblock On spreading speeds and traveling waves for growth and migration  models in a periodic habitat.
\newblock {\em J. Math. Biol.}, 45(6):511--548, 2002.






\end{thebibliography}
\end{document}